\numberwithin{equation}{section}
\numberwithin{figure}{section}
\theoremstyle{plain}
\newtheorem{thm}{\protect\theoremname}[section]
  \theoremstyle{remark}
  \newtheorem{rem}[thm]{\protect\remarkname}
  \theoremstyle{plain}
  \newtheorem{lem}[thm]{\protect\lemmaname}
  \theoremstyle{definition}
\newcommand{\Rmnum}[1]{\expandafter\@slowromancap\romannumeral#1@}\makeatother%%%----------------------------------------------------------------------------
\numberwithin{equation}{section}
\newcommand{\set}[1]{\left\{#1\right\}}
\newcommand{\pr}[1]{\left(#1\right)}
\newcommand{\defs}{:=}
\newcommand{\dif}{\mathrm{d}}
\DeclareSymbolFont{lettersA}{U}{pxmia}{m}{it}
\DeclareMathSymbol{\piup}{\mathord}{lettersA}{"19}
\newcommand{\mr}[1]{\mathrm{#1}}
\newcommand{\mcc}{\mathcal{C}}
\newcommand{\mcd}{\mathcal{D}}
  \providecommand{\definitionname}{Definition}
  \providecommand{\lemmaname}{Lemma}
  \providecommand{\remarkname}{Remark}
\providecommand{\theoremname}{Theorem}
\begin{document}
\title[]
{On Admissible Positions of Transonic Shocks for Steady Isothermal Euler Flows in a Horizontal Flat Nozzle under Vertical Gravity}

\author{Beixiang Fang}

\author{Xin Gao}

\address{B.X. Fang: School of Mathematical Sciences, MOE-LSC, and SHL-MAC, Shanghai
Jiao Tong University, Shanghai 200240, China }

%\address{B.X. Fang: Key Laboratory of Scientific and Engineering Computing
%(Ministry of Education), School of Mathematical Sciences, Shanghai
%Jiao Tong University, Shanghai 200240, China}

\email{\texttt{bxfang@sjtu.edu.cn}}

\address{X. Gao: School of Mathematical Sciences, Shanghai
	Jiao Tong University, Shanghai 200240, China }

\email{\texttt{sjtu2015gx@sjtu.edu.cn}}

% Please provide minimum  5 keywords.
 \keywords{2-D; isothermal gases; steady Euler system; transonic shocks; flat nozzle; gravity; receiver pressure; existence;}
\subjclass[2020]{35B20, 35J56, 35L65, 35L67, 35M30, 35M32, 35Q31, 35R35, 76L05, 76N10}
%\subjclass[2010]{35A01, 35A02, 35B20, 35B35, 35B65, 35J56, 35L65, 35L67, 35M30, 35M32, 35Q31, 35R35, 76L05, 76N10}

\date{\today}

% Email address of each of all authors is required.
% You may list email addresses of all other authors, separately.
 \email{}
 \email{}
 \email{}
\begin{abstract}
In this paper we are concerned with the existence of transonic shocks for 2-D steady isothermal Euler flows in a horizontal flat nozzle under vertical gravity. In particular, we focus on the contribution of the vertical gravity in determining the position of the shock front. For steady horizontal flows, the existence of normal shocks with the position of the shock front being arbitrary in the nozzle can be easily established. This paper will try to determine the position of the shock front as the state of the flow at the entrance of the nozzle and the pressure at the exit are slightly perturbed.
	Mathematically, it can be formulated as a free boundary problem for the steady Euler system with vertical gravity, and the position of the shock front is the very free boundary that need to be determined.
	Since the unperturbed normal shock solutions give no information on the position of the shock front, one of the key difficulties is to find where the shock front may appear.
	To overcome this difficulty, this paper proposes a free boundary problem of the linearized Euler system with vertical gravity, whose solution could be an initial approximation for the shock solution with the free boundary being the approximation for the shock front.
	Due to the existence of the vertical gravity, difficulties arise in solving the boundary value problem in the approximate subsonic domain behind the shock front.
	The linearized Euler system is elliptic-hyperbolic composite for subsonic flows, and the elliptic part and the hyperbolic part are coupled in the $0$-order terms depending on the acceleration of gravity $g$.
	Moreover, the coefficients are not constants since the unperturbed shock solution depends on the vertical variable.
	New ideas and techniques are developed to deal with these difficulties and, under certain sufficient conditions on the perturbation, the existence of the solution to the proposed free boundary problem is established as the acceleration of gravity $g>0$ and the perturbation are sufficiently small.
	Then, with the obtained initial approximation of the shock solution, a nonlinear iteration scheme can be constructed which leads to a transonic shock solution with the position of the shock front being close to the initial approximating position.
\end{abstract}

\maketitle

\tableofcontents

\section{Introduction}

This paper concerns the existence of transonic shocks for steady 2-D Euler flows of isothermal gases in a horizontal flat nozzle under the vertical gravity(see Figure \ref{fig:1}). Assume the flow enters the nozzle with a supersonic state and leaves it with a relatively high pressure, then it is expected that a shock front occurs in the nozzle such that the flow pressure rises to coincide with the pressure at the exit.
Then the position of the shock front is one of the most desirable information one would like to know.
This paper is devoted to determine the admissible position of the shock front with a given supersonic state at the entry and the receiver pressure at the exit, under the assumption that the fluid cannot penetrate the nozzle walls, as proposed by Courant and Friedrichs in \cite{CR} for supersonic flows with shocks in a nozzle.
In particular, this paper is going to investigate whether or not the vertical gravity helps to determine the position of the shock front and to show the mechanism if the answer is ``yes''.

\begin{figure}[!h]
	\centering
	\includegraphics[width=0.65\textwidth]{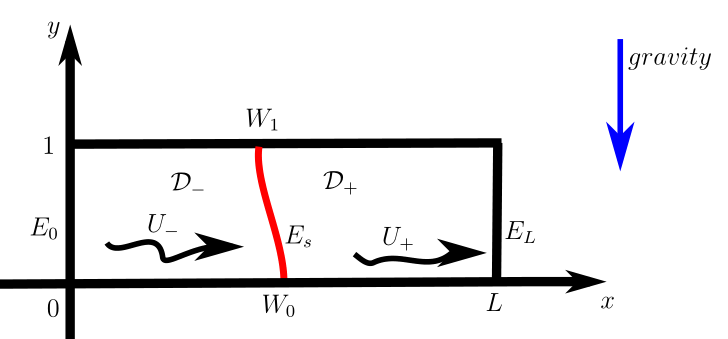}
	\caption{The transonic shock flows in a flat horizontal nozzle under vertical gravity.\label{fig:1}}
\end{figure}

Let $ (x,y) $ be the space variables with $ x $-axis standing for the horizontal direction and $ y $-axis the vertical direction.
Then the motion of the inviscid isothermal gas under vertical gravity is governed by the following system
\begin{align}
&\partial_x (\rho u) + \partial_y (\rho v)=0,\label{eq1}\\
&\partial_x (\rho u^2 + p) + \partial_y (\rho u v)=0,\\
&\partial_x (\rho u v ) + \partial_y (\rho v^2 + p) = - \rho g,\label{a}
\end{align}
where $\rho$ is the density, $p$ is the pressure, $(u,v)^\top $ are the horizontal component and vertical component of the velocity, and $g$ is the acceleration of gravity. For isothermal gases, its state equation is assumed to be $ p(\rho)=\rho $ in this paper. Then the sonic speed $ c^2\defs p'(\rho)\equiv1$.

Then for a shock front occurs in the flow field whose position is  $x= \varphi(y)$,  the following Rankine-Hugoniot conditions (which will be abbreviated as R-H conditions) should be satisfied
\begin{align}
&[\rho u] - \varphi^{'}[\rho v]=0,\label{eq2}\\
&[\rho u^2 + p] - \varphi^{'}[\rho u v]=0,\label{eq3}\\
&[\rho u v ] - \varphi^{'}[\rho v^2 + p]=0,\label{eq4}
\end{align}
where $[\cdot]$ stands for the jump of the corresponding quantity across the shock front.

Let
\begin{align}\label{eq5}
\mathcal{D} :  = \{ (x, y)\in \mathbb{R}^2 : 0 < x < L,\, 0 < y < 1 \}
\end{align}
be the domain bounded by a flat horizontal nozzle with the entrance $E_0$, the exit $E_L$, as well as the walls $W_0$ and $W_1$(see Figure \ref{fig:1}):
\begin{align*}
\begin{split}
	E_0\defs&\set{(x, y)\in \mathbb{R}^2 :  x = 0,\, 0 < y < 1 },\\
	E_L\defs&\set{(x, y)\in \mathbb{R}^2 :  x = L,\, 0 < y < 1 },\\
	W_0\defs&\set{(x, y)\in \mathbb{R}^2 :  0 < x < L,\,  y = 0 },\\
	W_1\defs&\set{(x, y)\in \mathbb{R}^2 :  0 < x < L,\,  y = 1 }.
\end{split}
\end{align*}
The assumption that the fluid cannot penetrate the nozzle boundary yields the following slip boundary condition on  $W_0$ and $W_1$:
\begin{equation}\label{eq:slip_bdry_cond}
	v=0.
\end{equation}
Then the existence problem of the transonic shocks could be formulated as follows.

\vskip 0.5cm

\textbf{The Free Boundary Problem {\bf {$\llbracket \textit{SP}\rrbracket $}}. }

\vskip 0.3cm

Let the independent flow parameters be denoted by $U\defs(p,\theta, q)^\top$, where $\theta = \arctan \displaystyle\frac{v}{u}$ is the flow angle, and $q= \sqrt{u^{2} + v^{2}}$ is the magnitude of the flow velocity.
Given a supersonic state $ U=U_{\mr{in}}(y) $ at the entrance $ E_{0} $, and a relatively high pressure $ p=P_{\mr{out}}(y) $ at the exit $ E_{L} $, whether or not there exists a shock solution $ U=U(x,y) $ in $ \mathcal{D} $ to the 2-D steady Euler system \eqref{eq1}-\eqref{a}, with the position of the shock front being
\begin{equation*}
	E_s\defs\set{(x, y)\in \mathbb{R}^2 :  x = \varphi(y),\, 0 < y < 1},
\end{equation*}
 such that the R-H conditions \eqref{eq2}-\eqref{eq4} are satisfied on $ E_s $, and the boundary condition \eqref{eq:slip_bdry_cond} holds on $W_0$ and $W_1$ (see Figure \ref{fig:1}).

%\vskip 0.5cm
\subsection{Steady normal shock solutions in a flat nozzle}

We first show the existence of special shock solutions to the problem {\bf {$\llbracket \textit{SP}\rrbracket $}} for horizontal flows.
The special solutions can be established under the following assumptions:
\begin{enumerate}
	\item[(H1)] The velocity directions are horizontal for the flows both ahead of and behind the shock front, namely, $ v\equiv 0 $ in $ \mcd $. Then the shock front is a vertical straight line such that $ \varphi'(y)\equiv 0 $ (see Figure \ref{fig:2}).
	\item[(H2)] The states for the flows both ahead of and behind the shock front depend only on the vertical variable $ y $, and is independent of the horizontal variable $ x $. That is, $ U=U(y) $.
\end{enumerate}
Let $p_0$, $ q_0$ are positive constants and $q_0 > 1$. Then it can be easily verified that
\begin{equation}\label{eq:bgsolu-super}
\bar{U}_-(y)=(\bar{p}_-(y),\bar{\theta}_-(y), \bar{q}_-(y))^\top : = ( p_0 \exp(-gy), 0, q_0)^\top,
\end{equation}
satisfies the Euler system \eqref{eq1}-\eqref{a}, which describe a horizontal supersonic flow.
Then, under the assumption (H1) that  $ \varphi'(y)\equiv 0 $, the R-H conditions $\eqref{eq2}$-$\eqref{eq3}$ become, with $ \bar{U}_+(y)=(\bar{p}_+(y), 0, \bar{q}_+(y))^\top $ being the state behind the shock front,
\begin{align}
&[\bar{\rho}\bar{q}]= \bar{\rho}_{+} \bar{q}_{+} - \bar{\rho}_{-} \bar{q}_{-}=0,\label{eq7}\\
&[\bar{p}+\bar{\rho}\bar{q}^2]  =  (\bar{p}_{+}+\bar{\rho}_{+} \bar{q}_{+}^2) - (\bar{p}_{-}+\bar{\rho}_{-} \bar{q}_{-}^2)=0.\label{eq7s}
\end{align}
Then it follows that
\begin{equation}\label{10}
\begin{split}
	\bar{p}_+ (y) &= \bar{\rho}_+ (y) = \bar{p}_-(y) \bar{q}_-^2(y) = \bar{p}_-(y) {q}_0^2,\\
	\bar{q}_+(y) &= \frac{1}{\bar{q}_-(y)} = \frac{1}{{q}_0} < 1.
\end{split}
\end{equation}
It can also be verified that $ \bar{U}_+(y) $ also satisfies the Euler system \eqref{eq1}-\eqref{a}, which describe a horizontal subsonic flow.

Thus, for any  $\bar{x}_s \in (0,L)$ such that the position of the shock front being
\[
	\bar{E}_s \defs \set{(x, y)\in \mathbb{R}^2 :  x = \bar\varphi(y)\equiv\bar{x}_s,\ 0<y<1},
\]
$ \pr{\bar{U}_-(y);\ \bar{U}_+(y);\ \bar\varphi(y)} $ consists a transonic normal shock solution to the problem {\bf {$\llbracket \textit{SP}\rrbracket $}}( see Figure \ref{fig:2}), with $ U_{\mr{in}}(y)\defs \bar{U}_-(y)$ and $ P_{\mr{out}}(y)\defs \bar{p}_+ (y) $, in the sense that
\begin{equation}\label{eq:special-solu}
	\bar{U}(x,y)\defs
	\begin{cases}
		\bar{U}_-(y), & \text{ for }0 < x < \bar{x}_s,\, 0 < y < 1,\\
		\bar{U}_+(y), & \text{ for }\bar{x}_s < x < L,\, 0 < y < 1.
	\end{cases}
\end{equation}
In this paper, the subscript ``$ - $'' will represent the parameters of the flow ahead of the shock front and the subscript ``$ + $'' behind of the shock front.
\begin{figure}[!h]
	\centering
	\includegraphics[width=0.6\textwidth]{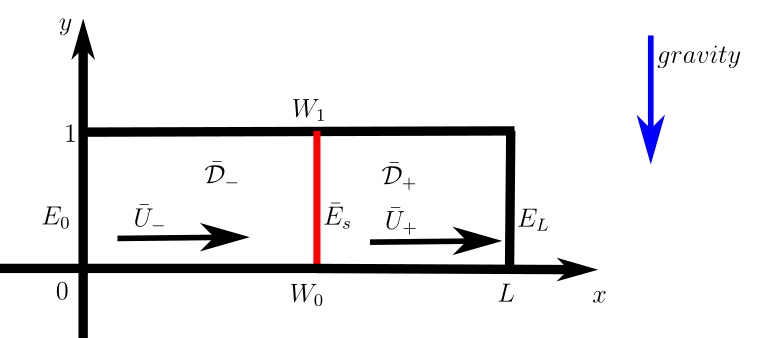}
	\caption{The transonic shock flows in the flat nozzle.\label{fig:2}}
\end{figure}

\begin{rem}
	Analogous to the transonic planar normal shocks for steady Euler flows in a flat nozzle without gravity, namely, $ g=0 $, the position $ \bar{E}_s $ of the shock front could be arbitrary in $ \mcd $ since $\bar{x}_s \in (0,L)$ could be arbitrary, and the subsonic state $ \bar{U}_+(y) $ behind the shock front is uniquely determined by the supersonic state $ \bar{U}_-(y) $ given by \eqref{eq:bgsolu-super}.
%	Obviously, for each $\bar{x}_s \in (0,L)$, $ (\bar{U}_-;\ \bar{U}_+;\ \bar{x}_s)$ gives a plane transonic shock solution to the steady Euler system \eqref{eq1}-\eqref{a}. Since the state $ \bar{U}_- $ in the supersonic region $D_-$ is given, then the state of the flow behind the shock front $ \bar{U}_+$ in the subsonic region $D_+$ is uniquely determined by \eqref{10}, but the position of the plane shock front could be arbitrary in the flat nozzle ${D} $.
\end{rem}

\begin{rem}
	In case $ \bar{q}_-(y) $, with $ \inf\limits_{0<y<1}\bar{q}_-(y)>1 $, is a function depending only on $ y $, special shock solutions  $ \pr{\bar{U}_-(y);\ \bar{U}_+(y);\ \bar\varphi(y)} $ to the problem {\bf {$\llbracket \textit{SP}\rrbracket $}} could also be established with $ \bar{U}_+(y) $ being determined by \eqref{10}, and the position of the shock front could also be arbitrary. This fact means that in general the perturbation of the horizontal component of the velocity does not help to determine the position of the shock front.
%	In the special shock solutions $ \pr{\bar{U}_-(y);\ \bar{U}_+(y);\ \bar\varphi(y)} $, it is worth of pointing out that  $ \bar{q}_-(y)>1 $ could also be any functions depending only on $ y $.
\end{rem}

\begin{rem}
	In case the flat nozzle boundary is slightly perturbed, by applying the ideas and techniques in \cite{FB63}, it turns out that, as the acceleration of gravity $ g>0 $ is small, the primary ingredient that helps to determine the position of the shock front is contributed by the perturbation of the nozzle boundary as well as the receiver pressure at the exit. That is, the contribution of the vertical gravity is covered and could not be observed clearly. Hence, in order to show the contribution of the vertical gravity, the flat nozzle boundary will not be perturbed in this paper.
\end{rem}

\begin{rem}
It is worth of pointing out that, for polytropic gases $ p=A(S)\rho^{\gamma} $ with the entropy $ S $ and the adiabatic exponent $\gamma>1$, there is no shock solutions to the problem {\bf {$\llbracket \textit{SP}\rrbracket $}} satisfying the assumptions (H1) and (H2).
\end{rem}

\subsection{The small perturbation problem}

Based on the established special solutions defined by \eqref{eq:special-solu}, this paper is going to investigate the mechanism how the vertical gravity contributes to determine the position of the shock front by slightly perturbed the pressure at the entrance and the exit of the flat nozzle.
Then the problem {\bf {$\llbracket \textit{SP}\rrbracket $}} is further described as the small perturbation problem  {\bf {$\llbracket \textit{FBP}\rrbracket $}} below with more detailed boundary data.

\vskip 0.5cm

\textbf{The small perturbation problem {\bf {$\llbracket \textit{FBP}\rrbracket $}}. }

\vskip 0.3cm
Let
\begin{equation}
	\begin{split}
		U_{\mr{in}}(y)&\defs \bar{U}_{-}(y) + \sigma( P_I(y),0,0)^\top, \\
		P_{\mr{out}}(y)&\defs \bar{p}_+(y) +{P}_e(y;\ g,\sigma),
	\end{split}
\end{equation}
where $ \sigma>0 $ and $ g>0 $ are sufficiently small constants, $ P_I(y)\in\mcc^{2,\alpha}(\bar{\mathbb{R}}_+) $ is a given function with $\alpha\in(0,1)$, and $P_e(y;\ g,\sigma)\in\mcc^{2,\alpha}(\bar{\mathbb{R}}_+)$ is a given function of $ y $ with parameters $ g>0 $ and $ \sigma>0 $.
Then try to determine a transonic shock solution $ \pr{ U_-(x,y);\ {U}_+(x,y);\ \varphi(y)} $ (see Figure \ref{fig:1}) to the problem {\bf {$\llbracket \textit{SP}\rrbracket $}} in the sense that:
%the states of the fluid $ U $ in the nozzle with a single shock front  $ \{x= \varphi(y)\}$  such that:
\begin{enumerate}
	\item The position of the shock front is
	\begin{equation}
		E_s\defs\set{(x, y)\in \mathbb{R}^2 :  x = \varphi(y),\, 0 < y < 1},
	\end{equation}
	and the domain $\mathcal{D}$ is divided into two parts by $E_s$:
	\begin{equation}\label{eq10}
	\begin{aligned}
	& \mathcal{D}_- = \{ (x, y)\in \mathbb{R}^2 : 0 < x < \varphi(y),\, 0 < y < 1\},\\
	& \mathcal{D}_+ = \{ (x, y)\in \mathbb{R}^2 : \varphi(y) < x < L,\, 0 < y < 1 \},
	\end{aligned}
	\end{equation}
	where $ \mathcal{D}_- $ is the region of the supersonic flow ahead of the shock front, while $ \mathcal{D}_+ $ is the region of the subsonic flow behind it.
	
	\item $ U(x,y) =  U_-(x,y)$ satisfies the Euler system $\eqref{eq1}$-$\eqref{a}$ in $ \mathcal{D}_- $,  the boundary conditions at the entry of the nozzle
	\begin{align}\label{eq11}
	U_-=U_{\mr{in}}(y),\quad \text{on}  \quad  E_0,
	\end{align}
	and the slip boundary condition on the walls of the nozzle
	\begin{align}\label{eq11-}
	\theta_- = 0, \quad \text{on} \quad (W_0\cup W_1)\cap \overline{\mathcal{D}_-};
	\end{align}
	
	\item $ U(x,y)= U_+(x,y) $ satisfies the Euler system $\eqref{eq1}$-$\eqref{a}$ in $ \mathcal{D}_+$, the slip boundary condition on the walls of the nozzle
	\begin{align}\label{eq14}
	\theta_+ = 0, \quad \text{on} \quad (W_0\cup W_1)\cap \overline{\mathcal{D}_+},
	\end{align}
	and the given pressure at the exit of the nozzle
	\begin{align}\label{eq14+}
	p_+ = P_{\mr{out}}(y),\quad \text{on} \quad E_L;
	\end{align}
	
	\item $(U_- , U_+)$ satisfies the R-H conditions $\eqref{eq2}$-$\eqref{eq4}$ across the shock front $ E_s $. %  we denote above free boundary problems as {\bf {$\textit{FBP}$}}.
\end{enumerate}

%\vskip 2cm

This paper is going to show the existence of a transonic shock solution to the small perturbation problem {\bf {$\llbracket \textit{FBP}\rrbracket $}} for certain given functions $P_I$ and $P_e$.

Let $\alpha\in(0,1)$. Suppose that $P_I\in\mcc^{2,\alpha}(\bar{\mathbb{R}}_+)$ is a given function satisfying
\begin{enumerate}
\item
For some constant $C_I >0$ independent of $g$,
\begin{align}\label{PCI}
  \|P_I\|_{\mcc^{2,\alpha}(\bar{\mathbb{R}}_+)}\leq C_I;
\end{align}
	\item
For some constant $C_{I0} >0$ independent of $g$,
\begin{align}\label{P1.21}
  P_I(y) \geq C_{I0}, \quad  \text{for any} \quad y\in [0,1];
\end{align}

	\item
Let $\frac{\partial_y  P_I(y)}{P_I(y)} = \wp(y)$. Then $\wp(y)$ satisfies
\begin{align}
 \wp(0) =& \wp(1)= -g,\label{P_I}\\
 \wp(y) <& -g,\quad \text{for} \quad y\in(0,1).\label{P_I1}
\end{align}
Moreover, for any $[\tau_1, \tau_2] \subsetneqq (0,1)$ and
the constants $\tau_1, \tau_2>0$ independent of $g$,
  there exist uniform constants $C_{I1}$ and $C_{I2}$ independent of $g$, such that
\begin{align}\label{P1.23}
  - C_{I1}\leq  \wp(y)\leq -C_{I2},\quad \text{for}\quad y\in[\tau_1, \tau_2].
\end{align}

\end{enumerate}

Let $P_e$ be a function with the following form
\begin{align}\label{PE}
	{P}_e(y;g,\sigma) \defs \sigma\cdot q_0^2 P_I( y) + g\sigma\cdot  q_0^2 P_E(y),
\end{align}
where $P_E\in\mcc^{2,\alpha}(\bar{\mathbb{R}}_+)$ is a given function at the exit of the nozzle.
\begin{rem}
There exist functions $P_I$ satisfying \eqref{PCI}-\eqref{P1.23}. For example, let
\begin{align*}
	\wp(y) =  \begin{cases} \displaystyle\frac{g-1}{\tau_1}\Big(\displaystyle\frac{1}{\tau_1^2} y^3 - \displaystyle\frac{3}{\tau_1}y^2 +3y \Big) -g, &y \in [0,\tau_1]\\
	-1, & y \in [\tau_1, \tau_2]\\ \displaystyle\frac{1-g}{(1-\tau_2)^3}\Big( y^3 - {3\tau_2}y^2 + {3\tau_2^2}y -{\tau_2^3}\Big) -1. & y \in [\tau_2, 1]
	\end{cases}
	\end{align*}
Then it is easy to find functions for $P_I$ that satisfies the conditions \eqref{PCI}-\eqref{P1.23}.
	%\begin{align*}
%	\wp(y) =  \begin{cases}
%	10(g-1)\Big(100 y^3 -30y^2 +3y \Big) -g, &y \in [0,\frac{1}{10}]\\
%	-1, & y \in [\frac{1}{10}, \frac{9}{10}]\\
%	10(1-g)\Big(100 y^3 -270y^2 +243y \Big) -730+729g. & y \in [\frac{9}{10}, 1]
%	\end{cases}
%	\end{align*}
%Then it is easy to find functions for $P_I$ that satisfies the conditions \eqref{PCI}-\eqref{P1.23}.

\begin{rem}
	\eqref{PCI}-\eqref{PE} are sufficient conditions on $ P_I(y) $ and $ P_e(y;\ g,\sigma) $, under which the position of the shock front can be determined and the existence of a transonic shock solution to the problem {\bf {$\llbracket \textit{FBP}\rrbracket $}} can be established.
These sufficient conditions show a mechanism how the vertical gravity contributes to determine the position of the shock front. In particular, the conditions \eqref{PCI}-\eqref{P1.23} yield that the break-down of the balance between the pressure and the vertical gravity, such that the velocity direction is deflected and the flow is no longer horizontal. Then it will be observed that the action of the vertical gravity is not canceled, which contributes to determine the position of the shock front.
\end{rem}
	%with $h_i(y) < 0(i=1,2,3)$ and
%	\begin{align}
%	&h_1(0) = -g,\quad h_3(1) = -g,\\
%	&\partial_y^m h_2\Big(\frac{1}{10}\Big) = \partial_y^m h_1\Big(\frac{1}{10}\Big),\\
%	&\partial_y^m h_2\Big(\frac{9}{10}\Big) =\partial_y^m h_3 \Big(\frac{9}{10}\Big),\quad m=0,1,2.\label{hi}
%	\end{align}
%	\item
%	\begin{align}\label{PE}
%	{P}_e(L,y;g,\sigma) = q_0^2 P_I(y)\sigma +q_0^2 P_E(L,y)g\sigma,
%	\end{align}
%	where $\sigma >0 $ is a sufficiently small constant and $\sigma\neq g$, $P_E$ is a given function and $P_E\in\mcc^{2,\alpha}(\bar{\mathbb{R}}_+)$.

\end{rem}

Define
\begin{align*}
  {\Omega }^\aleph:  = \{ (\xi, \eta)\in \mathbb{R}^2 : 0 < \xi < \xi_1, \,0 < \eta < 1\},
\end{align*}
with $\xi_1 = \displaystyle\frac{\sqrt{\bar{M}_-^2 -1}}{p_0 q_0}$.

Let $({p}^\aleph, {\theta}^\aleph)^\top$ be the solution to the following problem in the domain ${\Omega}^\aleph$:
\begin{align}
  &\partial_\eta {p}^\aleph + q_0 \partial_\xi {\theta}^\aleph = 0,\label{eq896*}\\
 &\partial_\eta {\theta}^\aleph - \frac{1 - \bar{M}_-^2}{p_0^2 q_0^3} \partial_\xi {p}^\aleph = 0,\label{eq897*}
\end{align}
with the initial-boundary conditions
\begin{align}
{\theta}^\aleph(0,\eta) =& 0, \quad {p}^\aleph(0,\eta) = P_I(\eta),\\
{\theta}^\aleph(\xi,0) =& 0,  \quad {\theta}^\aleph(\xi,1) = 0. \label{eq899*}
\end{align}
Let
\begin{align}\label{Rg*}
  \mathcal{R}_{g\sigma}^\natural(\xi)\defs & {K}\cdot \int_0^1 \int_0^\xi {\theta}^\aleph(\tau,\eta)\dif \tau \dif \eta,
  \end{align}
  where
  \begin{align}\label{defineK}
    {K}: = - \displaystyle\frac{(q_0^2 -1)^2}{p_0 q_0} < 0.
  \end{align}
\begin{figure}[!h]
	\centering
	\includegraphics[width=0.35\textwidth]{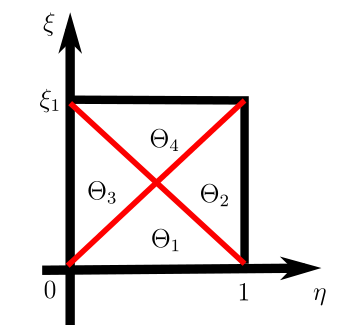}
	\caption{The characteristics for the equations \eqref{eq896*} and \eqref{eq897*}.\label{fig:5}}
\end{figure}

\begin{rem}
For the problem \eqref{eq896*}-\eqref{eq899*}, direct calculations yield that
\begin{align}\label{thealeph}
	{\theta}^\aleph(\xi,\eta) =  \begin{cases}
	 \Theta_1(\xi,\eta), &0\leq \xi \leq \displaystyle\frac{\xi_1}{2}, \, \frac{\xi}{\xi_1}\leq \eta \leq 1-\frac{\xi}{\xi_1}\\
 \Theta_2(\xi,\eta), & \xi_1 (1-\eta)\leq \xi \leq \xi_1 \eta, \,\frac12\leq \eta \leq 1\\
  \Theta_3(\xi,\eta), & \xi_1 \eta \leq \xi \leq \xi_1 (1-\eta),\, 0\leq \eta \leq \frac12\\
	 \Theta_4(\xi,\eta), &\displaystyle\frac{\xi_1}{2} \leq \xi \leq \xi_1,\, 1-\frac{\xi}{\xi_1} \leq \eta \leq \frac{\xi}{\xi_1}
	\end{cases}
	\end{align}
where $\mathcal{K}=\frac{\sqrt{\bar{M}_-^2 - 1}}{2p_0 q_0^2}$,
\begin{align*}
  \Theta_1(\xi,\eta) =&
\mathcal{K} \Big({P}_I\big(\eta -\frac{\xi}{\xi_1}\big) - {P}_I\big(\eta +\frac{\xi}{\xi_1}\big)\Big),\\
 \Theta_2 (\xi,\eta)
    =&\mathcal{K} \Big({P}_I\big(\eta -\frac{\xi}{\xi_1}\big) -  {P}_I\big(2-\eta -\frac{\xi}{\xi_1}\big)\Big),\\
 \Theta_3(\xi,\eta)=&
 \mathcal{K} \Big( {P}_I\big(\frac{\xi}{\xi_1}-\eta\big) - {P}_I\big(\eta +\frac{\xi}{\xi_1}\big)\Big),\\
  \Theta_4(\xi,\eta)=&
 \mathcal{K} \Big( {P}_I\big(\frac{\xi}{\xi_1}-\eta\big)- {P}_I\big(2-\eta -\frac{\xi}{\xi_1}\big)\Big).
\end{align*}
Thus, \eqref{Rg*} implies that
  \begin{equation}\label{expressedR}
	\mathcal{R}_{g\sigma}^\natural(\xi)\defs
	\begin{cases}
		&\flat_1(\xi) , \quad \text{ for}\quad 0 \leq  \xi \leq \frac{\xi_1}{2},\\
&\flat_2 (\xi),\quad \text{ for}\quad   \frac{\xi_1}{2}< \xi \leq {\xi_1},
	\end{cases}
\end{equation}
where
\begin{align}
  \flat_1(\xi)\defs& {K}\Big(\int_0^\xi\int_{0}^{\frac{\tau}{\xi_1}}\Theta_3(\tau,\eta)\dif \eta \dif \tau+ \int_0^\xi\int_{\frac{\tau}{\xi_1}}^{1-\frac{\tau}{\xi_1}}\Theta_1(\tau,\eta)\dif \eta \dif \tau\notag\\
& \qquad \qquad \qquad \qquad \qquad \qquad +\int_0^\xi\int_{1-\frac{\tau}{\xi_1}}^1\Theta_2(\tau,\eta)\dif \eta \dif \tau\Big),\\
\flat_2(\xi)\defs& {K}\Big(\flat_1(\frac{\xi_1}{2}) + \int_{\frac{\xi_1}{2}}^{\xi} \int_{\frac{\tau}{\xi_1}}^{1} \Theta_2(\tau,\eta)
 \dif \eta\dif \tau +  \int_{\frac{\xi_1}{2}}^{\xi} \int_0^{1 - \frac{\tau}{\xi_1}} \Theta_3(\tau,\eta)
 \dif \eta\dif \tau\notag\\
 &\qquad \qquad\qquad \qquad \qquad\qquad  +  \int_{\frac{\xi_1}{2}}^{\xi} \int_{1-\frac{\tau}{\xi_1}}^{\frac{\tau}{\xi_1}} \Theta_4(\tau,\eta)
 \dif \eta\dif \tau
 \Big).
\end{align}

\end{rem}
Then we are going to prove the following Theorem in this paper.
\begin{thm}\label{thm:ExistenceNozzleShocks}
Suppose that \eqref{PCI}-\eqref{PE} hold. $\mathcal{R}_{g\sigma}^\natural $ be the function defined in \eqref{expressedR}, and
\begin{align}
\mathcal{P}_{g\sigma}^\natural\defs  \displaystyle\frac{q_0^2 -1}{p_0^2 q_0}\int_0^1 {P}_E(\eta)\dif \eta.
\end{align}
Assume that
\begin{align}\label{thmR}
		\mathcal{R}_{g\sigma}^\natural(\xi_1 )< \mathcal{P}_{g\sigma}^\natural< \mathcal{R}_{g\sigma}^\natural(0),
	\end{align}
then there exist a sufficiently small constant $ g_0>0 $ such that for any $0<g<g_0$ and
\begin{align}\label{gsigma}
  0< \sigma \leq g^3,
\end{align}
 there exists a transonic shock solution $ \pr{ U_-(x,y);\ {U}_+(x,y);\ \varphi(y)} $ to the problem {\bf {$\llbracket \textit{FBP}\rrbracket $}}.
\end{thm}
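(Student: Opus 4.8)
\medskip
\noindent\textbf{Plan of the proof.}
The idea is to treat the abscissa $\bar{x}_s$ of the background normal shock \eqref{eq:special-solu} — which the unperturbed problem leaves completely free — as an extra unknown, to single it out through a solvability condition for a \emph{linearized} free boundary problem, and then to close a nonlinear iteration around the resulting approximate shock solution in H\"older spaces; throughout, $E_s$ is first straightened by a change of variables carrying $\mcd_+$ onto a fixed rectangle (for instance using the stream function as a coordinate), so that $\varphi$ enters only through the fixed-domain equations, the R-H relations, and lower-order terms. First I would solve the supersonic region. In $\mcd_-$ the system \eqref{eq1}-\eqref{a} is hyperbolic, and with the Cauchy data \eqref{eq11} at $E_0$ and the slip condition \eqref{eq11-} on $W_0\cup W_1$ the forward problem in $x$ is solvable, its solution persisting past any candidate shock location for $\sigma$ small, with $U_-=\bar{U}_-+O(\sigma)$. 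At the principal order, writing $U_-=\bar{U}_-+\sigma(\dot{p}_-,\dot{\theta}_-,\dot{q}_-)^\top+\cdots$ and passing to the normalized coordinates $(\xi,\eta)$ that turn the linearized supersonic operator into \eqref{eq896*}-\eqref{eq897*}, the pair $(\dot{p}_-,\dot{\theta}_-)$ is exactly $(p^\aleph,\theta^\aleph)$ of \eqref{eq896*}-\eqref{eq899*}; the corner relation \eqref{P_I} renders the entrance data compatible with slip, and \eqref{thealeph} is just the method of characteristics with reflections off $\eta=0,1$.

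Next I would treat the subsonic region and pin down $\bar{x}_s$. Linearizing the R-H conditions \eqref{eq2}-\eqref{eq4} about the normal shock expresses, on $E_s$, the jumps of $p_+,\theta_+,q_+$ and the slope $\varphi'$ in terms of the now-explicit supersonic trace, leaving $\bar{x}_s$ as a parameter. In $\mcd_+$ one splits the linearized Euler system into its elliptic part — a second-order equation for the pressure perturbation — and its hyperbolic part — the transport along the nearly horizontal streamlines of the linearized Bernoulli quantity $\delta B=\bar{q}_+\delta q_++\delta p_+/\bar{p}_+(y)$ — where the two parts are coupled through zeroth-order terms carrying the factor $g$ and all coefficients are $y$-dependent through $\bar{\rho}_+(y)=p_0q_0^2e^{-gy}$. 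The elliptic problem then carries the R-H-transmitted data on $E_s$, the Dirichlet datum $p_+=P_{\mr{out}}$ on $E_L$ from \eqref{eq14+}, and the slip conditions \eqref{eq14} on the walls; solving the transport part explicitly along streamlines and inserting it into the elliptic problem produces one scalar compatibility identity — obtained, e.g., by integrating a divergence-form consequence of the linearized system over $\mcd_+$ against the boundary data — which in the normalized variables reads $\mathcal{R}_{g\sigma}^\natural(\xi_s)=\mathcal{P}_{g\sigma}^\natural$, with $\xi_s$ the normalized shock abscissa. Since $\mathcal{R}_{g\sigma}^\natural$ of \eqref{expressedR} is continuous on $[0,\xi_1]$, hypothesis \eqref{thmR} and the intermediate value theorem deliver $\xi_s\in(0,\xi_1)$ and hence $\bar{x}_s$; with $\bar{x}_s$ fixed the elliptic problem is solved by Schauder estimates, yielding the approximate solution $(\dot{U}_-;\dot{U}_+;\dot\varphi)$ with uniform $\mcc^{2,\alpha}$ bounds.

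Finally I would set up the nonlinear iteration. Write $U_\pm=\bar{U}_\pm+\sigma\dot{U}_\pm+V_\pm$ and $\varphi=\bar{x}_s+\sigma\dot\varphi+\chi$; at each step freeze the coefficients, move the quadratic and $g$-coupling remainders to the right-hand sides, and solve the same type of linearized free boundary problem for the increments, permitting a tiny correction of $\bar{x}_s$ so that the compatibility identity remains satisfied. In $\mcc^{2,\alpha}$ the nonlinear remainders are of size $O(\sigma^2)$ and $O(g\sigma)$, while inverting the linearized operator loses powers of $g$ (chiefly through the position identity, whose sensitivity is controlled by a quantity of size $\sim g$); the hypothesis $0<\sigma\le g^3$ together with $g<g_0$ then makes the iteration map a contraction on a small ball, and its fixed point is the sought transonic shock solution to $\llbracket\textit{FBP}\rrbracket$ with shock front near $x=\bar{x}_s$.

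The crux, and the step I expect to be the main obstacle, is the subsonic linearized boundary value problem under gravity: establishing its well-posedness despite the zeroth-order coupling between elliptic and hyperbolic parts and the non-constant, $g$-dependent coefficients, and simultaneously extracting from it precisely the compatibility identity $\mathcal{R}_{g\sigma}^\natural(\xi_s)=\mathcal{P}_{g\sigma}^\natural$ so that \eqref{thmR} determines the shock. Tracking how the $g$-dependent corrections enter the a priori estimates — so that neither the intermediate value argument nor the contraction estimate degenerates as $g\to0$ under the scaling $\sigma\le g^3$ — is where most of the technical effort will go.
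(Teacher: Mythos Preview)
Your overall strategy matches the paper's: Lagrangian/stream-function reformulation, a linearized free boundary problem to locate the shock via a solvability condition, then a nonlinear iteration. Two points, however, are genuine gaps.

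First, the function spaces. You propose to close everything in $\mcc^{2,\alpha}$ via Schauder estimates for the elliptic sub-problem on a rectangle. This will not work as stated: the subsonic domain has four right-angle corners where the shock front meets the walls and where the exit meets the walls, and the boundary conditions change type (Dirichlet for $p$ on $E_s$ and $E_L$, Neumann-type slip for $\theta$ on $W_0,W_1$). Classical $\mcc^{2,\alpha}$ regularity up to such corners fails in general. The paper handles this by working in $W^1_\beta$ with $\beta>2$ for $(p,\theta)$ and a mixed norm \eqref{eq099} for $U_+$, invoking Grisvard's corner theory (their Theorem~\ref{cv0} and the references to \cite{PG}); the supersonic part stays in $\mcc^{2,\alpha}$. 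Without this change your a~priori estimates and the contraction step do not close.

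Second, the solvability identity and its sensitivity. The compatibility condition that actually comes out of the linearized subsonic problem is \emph{not} $\mathcal{R}_{g\sigma}^\natural(\xi_s)=\mathcal{P}_{g\sigma}^\natural$ but a perturbation $\mathcal{R}(\xi_s)=\mathcal{P}$ of it (Lemma~\ref{posi}), with $\mathcal{R}-\mathcal{R}_{g\sigma},\ \mathcal{P}-\mathcal{P}_{g\sigma}$ of higher order in $g,\sigma$; one first solves $\mathcal{R}_{g\sigma}^\natural(\bar\xi_*)=\mathcal{P}_{g\sigma}^\natural$ by the intermediate value theorem and then applies the implicit function theorem to get $\dot\xi_*$. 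Crucially, the derivative in the position variable is of size $g\sigma$ (since $\dot\theta_-^{(1)}=\sigma\,\theta^\aleph$ and the identity carries a prefactor $g$), not merely $g$ as you wrote; this is exactly why the paper needs the sign condition \eqref{P_I1}--\eqref{P1.23} on $P_I$ (to force $\theta^\aleph>0$ and hence monotonicity of $\mathcal{R}_{g\sigma}^\natural$) and why the contraction loses a factor $(g\sigma)^{-1}$ against remainders of size $O(\sigma^2)$, giving $|\delta\xi_*|\lesssim\sigma/g$ and a contraction rate $\lesssim\sigma/g\le g^2$ under $\sigma\le g^3$. Your scaling discussion should be corrected accordingly.
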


\begin{rem}
The condition \eqref{gsigma} can be replaced by
\begin{align}
  0< \sigma \leq g^{2+\varepsilon},
\end{align}
where the constant $\varepsilon>0$. Then $g_0$ will depend on $\varepsilon$.
\end{rem}

\begin{rem}
Under the condition of \eqref{thmR}, there exists a solution $\bar{\xi}_*\in (0, \xi_1)$ such that
\begin{align}\label{inipo}
  \mathcal{R}_{g\sigma}^\natural (\bar{\xi}_*) = \mathcal{P}_{g\sigma}^\natural.
  \end{align}
  It turns out that this solution $\bar{\xi}_*$ helps to obtain the initial approximating position of the shock front. See Theorem \ref{thm:initial_approx_existence} for details.

\end{rem}

%Suppose that the supersonic incoming flow at the entrance $E_0$ is given by
%\begin{equation}
%U_- = \bar{U}_- + (\sigma P_I(y),0,0).
%\end{equation}
%Moreover, at the exit $E_L$, assume that
%\begin{align}
%p_e(L,y;g,\sigma) = \bar{p}_+(y) +{P}_e(L,y;g,\sigma).
%\end{align}

%\begin{rem}
%	By applying the definition of $P_I$, it is easy to see that $\bar{p}_- + \sigma P_I$ is separated from the background solution $\bar{p}_-$. Moreover, choosing such $\sigma P_I$ guarantees that the compatibility conditions hold at the corners of the entrance up to second order. Furthermore, the $\mcc^{2,\alpha}$ regularity of the supersonic solution can be established. In addition, $P_I$ is a strictly decreasing function, which plays a crucial role in determining the position of the shock front.
%\end{rem}

In order to establish the existence of transonic shock solutions to the problem {\bf {$\llbracket \textit{FBP}\rrbracket $}}, one of the key difficulties is to obtain information on the position of the shock front since it can be arbitrary for the special solution \eqref{eq:special-solu}. Motivated by the ideas introduced by Fang and Xin in \cite{FB63}, a free boundary problem for the linearized Euler system with vertical gravity will be proposed to obtain an approximating position of the shock front.
Analogous to the problem without the gravity in \cite{FB63}, the linearized equations of the Euler system for the subsonic flow behind the shock front are elliptic-hyperbolic composite and a solvability condition should be satisfied in order that the boundary value problem for the elliptic sub-system has a solution, which is employed to determine the approximating position of the shock front.
However, there are differences in the linearized system that bring new difficulties.
The first difference is the existence of the 0-order terms that depend on the acceleration of gravity $ g $
 in the linearized Euler system. It leads to the coupling between the elliptic part and the hyperbolic part.
The second difference is the variable coefficients of the linearized system since the unperturbed shock solution depends on the vertical variable $ y $.
Both differences are brought by the existence of the vertical gravity.
They couple together and make it more difficult to deduce the solvability condition for the elliptic sub-problem and to further analyse its relation with the approximation position of the shock front.
To overcome the difficulties brought by these differences, an auxiliary system is introduced to help deduce the solvability condition for the linearized problem of elliptic-hyperbolic coupled type for the subsonic
flow, and establish the existence of its solution.
 %to deduce the solvability condition, which turns out to be also sufficient to ensure the existence of solutions to the elliptic sub-problem, as well as the linearized problem for the subsonic flow behind the shock front.
Then further careful analysis on the solvability condition will be carried out to show the existence of the approximating position of the shock front under the prescribed conditions on $ P_I(y) $ and $ P_e(y;\ g,\sigma) $.
Since the nozzle boundary is not perturbed, it turns out that the leading terms in the solvability condition are $g\cdot\sigma$-terms among the higher order terms.
%That is, it is
These $g\cdot\sigma$-terms show the contribution of the vertical gravity in determining the position of the shock front.
Once the initial approximation is obtained, a further nonlinear iteration could be constructed and proved to lead to a transonic shock solution to the problem {\bf {$\llbracket \textit{FBP}\rrbracket $}} if the acceleration of gravity $g>0$ is sufficiently small and $\sigma$ is of order $g^3$.

The flow pattern of gas flows involving a single shock front in a nozzle, which enter the nozzle with a supersonic state and leave with a subsonic state, is one of the fundamental phenomena for nozzle flows.
In the mathematical analysis for it, how the position of the shock front can be determined is one of the key issues.
In \cite{CR}, Courant and Friedrichs first gave a systematic analysis from the viewpoint of nonlinear partial differential equations.
They point out that, the position of the shock front cannot be determined unless additional conditions are imposed at the exit and the pressure condition is suggested and preferred(see \cite[Page 373-374]{CR}).
Since then, in order to establish a rigorous mathematical analysis for the flow pattern, various nonlinear PDE models and different boundary conditions have been proposed, fruitful ideas and methods had been developed, and substantial progresses had been made.
In 1980s, for the unsteady transonic gas flows governed by the quasi-one-dimensional models, in \cite{Liu1982ARMA,Liu1982CMP},
T.P. Liu proved the existence of shocks solutions for certain given Cauchy data, and established a stability theory for them.
In \cite{EmbidGoodmanMajda1984}, Embid-Goodman-Majda showed that, in general, there exist more than one shock solutions for the steady quasi-one-dimensional model.
%In \cite{EmbidGoodmanMajda1984}, Embid-Goodman-Majda showed that there may exist multiple shock solutions for steady quasi-one-dimensional flows in a nozzle with both expanding and contracting portions.
See also, for instance, \cite{ChenGengZhang2009SIAMJMA,RauchXieXin2013JMPA} and references therein for literatures on quasi-one-dimensional nozzle flows.
As to the steady multi-dimensional models such as potential equations or the Euler system, thanks to continuous efforts of many mathematicians, there have been substantial progresses in the past two decades, for instance, see \cite{BF,GCM2007,GK2006,GM2003,GM2004,
GM2007,SC2005,Chen_S2008TAMS,SC2009,CY2008,FG,FL,FB63,LXY2009,
LiXY2009,LiXY2010,LiXinYin2010PJM,LiXinYin2011PJM,LXY2013,
LiuYuan2009SIAMJMA,LiuXuYuan2016AdM,ParkRyu_2019arxiv,Yong,WS94,
XinYanYin2011ARMA,XinYin2005CPAM,XinYin2008JDE,XinYin2008PJM,
HY2008,Yuan2012NA,HZ1}.
Two typical kinds of nozzles are studied.
One is an expanding nozzle of an angular sector or a diverging cone.
In \cite{CR}, Courant and Friedrichs established the unique existence of a transonic shock solution in such a nozzle with given constant pressure at the exit.
Based on this shock solution, in \cite{SC2009} by Chen and in a series of papers \cite{LXY2009,LiXY2009,LXY2013} by Li-Xin-Yin, the well-posedness of shock solutions in an expanding nozzle has been established, with prescribed pressure at the exit as suggested by Courant and Friedrichs.
See also \cite{LiXY2010,LiXinYin2010PJM,LiXinYin2011PJM,Yong,WS94} for related studies on transonic shocks in a 3-D axisymmetric conic nozzle.
The other is a flat nozzle with two parallel walls.
In this case, the existence of planar normal shock solutions can be easily established. However, the position of the shock front cannot be determined since it can be arbitrary in the flat nozzle.
Thus, as the state of the incoming flow or the nozzle boundary is perturbed, since no information is available in advance, catching the position of the shock front is one of the key difficulties.
An idea to deal with this difficulty is presuming that the shock front goes through a fixed point which is given in advance artificially, and spontaneously replacing the pressure condition at the exit by other conditions, for instance, see \cite{GK2006,GM2003,XinYanYin2011ARMA,XinYin2005CPAM,
XinYin2008JDE,XinYin2008PJM}.
%In particular, in \cite{GM2003}, Chen-Feldman applied this idea to prove the existence of transonic shock solutions for multidimensional potential flows with given potential value at the exit. For Euler system, in \cite{GK2006}, Chen-Chen-Song established the existence of transonic shock solutions with given vertical component of the velocity.
%See also \cite{XinYanYin2011ARMA,XinYin2005CPAM,XinYin2008JDE,XinYin2008PJM}.
Recently, in \cite{FB63}, Fang-Xin proposed another idea to determine the position of the shock front with the pressure condition at the exit. They proposed a free boundary problem for the linearized Euler system whose solution could be taken as an initial approximation for the transonic shock solution, including the approximating position of the shock front. Then a nonlinear iteration scheme starting from the initial approximation could be designed, which leads us to a shock solution. In the above literatures, the exterior forces are neglected.
%To the best of our knowledge, there is no mathematical analysis on the influence of the gravity.
In this paper, the force of gravity will be taken into account and it will be investigated whether and how the vertical gravity contributes to determine the position of the shock front for the flow in a horizontal nozzle.

\subsection{Outline of the paper.}

The rest of the paper is organized as follows. In Section 2, the problem {\bf {$\llbracket \textit{FBP}\rrbracket $}} is
reformulated via the Lagrangian transformation. Then a free boundary problem of the linearized
Euler system based on the background normal shock solution
is proposed in order to obtain an initial approximation of the shock solution. Moreover, the main theorems are stated. In Section 3,
a preliminary solving boundary value problems of a typical elliptic-hyperbolic composite system is given, solvability conditions will be described, and the existence as well as the a prior estimates will be established. They will be employed later in solving the linearized problem for the subsonic flows.
 In Section 4, with the help of the preliminary in Section 3, the existence of the initial approximation for the shock solution can be established, and, the approximate position of the shock front can also be determined by applying the solvability condition given in Section 3. Based on the initial approximation, in Section 5, a nonlinear iteration scheme will be described. Finally, in Section 6, the nonlinear iteration scheme will be verified to
be well-defined and contractive, which concludes the proof for the main theorem.

\section{The Lagrange transformation and the main results}
In this section, the Lagrange
transformation will be introduced to straighten the streamline and reformulate  the problem {\bf {$\llbracket \textit{FBP}\rrbracket $}}. Then, the free boundary problem for the linearized Euler system will be introduced, which will be used to determine an initial approximation of the shock solution. Finally, the main theorems, describing the existence of the initial approximation and the transonic shock solution, are presented.

\subsection{Reformulation by the Lagrange transformation.}
For steady flows, the streamlines coincide with the characteristics associating to the linearly degenerate eigenvalue of the Euler system. It is useful to employ the Lagrange transformation to straighten the streamlines which turns out to be crucial for the regularity analysis of the solution in the subsonic region.
We describe formally the Lagrange transformation below and refer the readers to, for instance, \cite{SC2005,LXY2009,LXY2013} and references therein for more details.

Let
\begin{equation}\label{eq17}
\left\{
\begin{array}{l}
\xi = x,\\
\eta = \int_{(0,0)}^{(x,y)}\rho u(s,t) \dif t - \rho v(s,t) \dif s.
\end{array}
\right.
\end{equation}
%Direct calculations yield that the Jacobian of the transformation is
%\begin{equation}\label{eq19}
% \frac{\partial(\xi,\eta)}{\partial(x,y)} = \rho u,
%\end{equation}
%obviously, it is invertible if and only if $\rho u \neq 0 $.
Under this transformation, the equations $\eqref{eq1}$-$\eqref{a}$ become
\begin{align}
&\partial_\xi \Big(\displaystyle\frac{1}{\rho u}\Big) - \partial_\eta \Big(\displaystyle\frac{v}{u}\Big) = 0,\label{eq22}\\
&\partial_\xi \Big(u+ \displaystyle\frac{p}{\rho u}\Big) - \partial_\eta \Big(\displaystyle\frac{pv}{u}\Big) = 0,\label{eq23}\\
&\partial_\xi v + \partial_\eta p + \displaystyle\frac{g}{u} = 0.\label{eq24}
\end{align}
Under the Lagrange transformation, the upper boundary $W_1$ becomes $ \set{\eta = \eta_0} $ with
\begin{align*}
  \eta_0 = q_0 \int_0^1 \Big(\bar{p}_- (y)+ \sigma P_I (y)\Big) \dif y,
\end{align*}
which depends on the quantities $g$ and $\sigma$. Obviously, the value of $\eta_0$ changes as $g$ and $\sigma$ change.
Therefore, it would be better to further introduce the following transformation such that $W_1$ becomes a fixed boundary $ \set{\tilde{\eta} = p_0 q_0} $ independent of $g$ and $\sigma$:
%Thus one may first convert it into a fixed boundary. Furthermore, the following transformation will be employed
\begin{equation}\label{eq250000}
\left\{
\begin{array}{l}
\tilde{\xi} = \xi,\\
\tilde{\eta} = \displaystyle\frac{p_0 q_0}{\eta_0} \eta,
\end{array}
\right.
\end{equation}
where
\begin{equation}
\begin{aligned}
  \displaystyle\frac{p_0 q_0}{\eta_0}  \defs  1 + H_1(g,\sigma),
\end{aligned}
\end{equation}
with
\begin{align}\label{q4}
  H_1(g,\sigma) \defs \frac{p_0 \Big(1+ \displaystyle\frac{1}{g}(e^{-g}-1)\Big) - \sigma\cdot \int_{0}^{1}P_I(y)\dif y}{-p_0\displaystyle\frac{1}{g}\Big(e^{-g}-1\Big) + \sigma\cdot \int_{0}^{1}P_I(y)\dif y}.
\end{align}
%Direct calculations yield that the Jacobian of the transformation is
%\begin{equation}\label{eq19}
% \frac{\partial(\xi,\eta)}{\partial(x,y)} = \rho u,
%\end{equation}
%obviously, it is invertible if and only if $\rho u \neq 0 $.
Then the equations \eqref{eq22}-\eqref{eq24} are reformulated as
 \begin{align}
&\partial_{\tilde{\xi}} \Big(\displaystyle\frac{1}{\tilde{\rho} \tilde{u}}\Big) - \Big(1+H_1(g,\sigma)\Big)\partial_{\tilde{\eta}} \Big(\displaystyle\frac{\tilde{v}}{\tilde{u}}\Big) = 0,\label{eq6667}\\
&\partial_{\tilde{\xi}} \Big(\tilde{u}+ \displaystyle\frac{\tilde{p}}{\tilde{\rho} \tilde{u}}\Big) - \Big(1+H_1(g,\sigma)\Big)\partial_{\tilde{\eta}} \Big(\displaystyle\frac{\tilde{p}\tilde{v}}{\tilde{u}}\Big) = 0,\label{eq6668}\\
&\partial_{\tilde{\xi}} \tilde{v} + \Big(1+H_1(g,\sigma)\Big)\partial_{\tilde{\eta}} \tilde{p} + \displaystyle\frac{g}{\tilde{u}} = 0.\label{eq06669}
\end{align}
For simplicity of the notations, we shall drop `` $ \tilde{} $ '' in the sequel arguments. In addition, without loss of generality, we may assume $p_0 q_0 = 1$.

Further computations yield that the equations \eqref{eq6667}-\eqref{eq06669} can be rewritten as the following form:
\begin{align}
&\Big(1+H_1(g,\sigma)\Big)\partial_{{\eta}} {p} - \displaystyle\frac{\sin {\theta}}{{\rho} {q}}\partial_{{\xi}} {p} + {q} \cos {\theta} \partial_{{\xi}} {\theta} + \displaystyle\frac{\cos {\theta}}{{q}} g =0,\label{eq260000}\\
&\Big(1+H_1(g,\sigma)\Big)\partial_{{\eta}} {\theta} - \displaystyle\frac{\sin {\theta}}{{\rho} {q}} \partial_{{\xi}} {\theta} - \displaystyle\frac{\cos {\theta}}{{\rho} {q}}\displaystyle\frac{1 - {{M}}^2}{{\rho} {{q}}^2 } \partial_{{\xi}}{ p} - \displaystyle\frac{\sin {\theta}}{{\rho} {{q}}^3} g =0,\label{b}\\
&\partial_\xi \Big(\frac12 q^2\Big) + \frac{1}{\rho} \partial_\xi p + g \tan {\theta}= 0.\label{b1}
\end{align}
The equation $\eqref{b1}$ can be replaced by the following form:
\begin{align}
  \partial_{{\xi}} B + g \tan {\theta}=0,
\end{align}
where the Bernoulli constant $B = \displaystyle\frac12 {q}^2 + {i}$ and $i=\ln \rho$ being the enthalpy.

\begin{rem}
	It is easy to see that \eqref{b1} is a transport equation and is hyperbolic. Moreover, the equations \eqref{eq260000} and \eqref{b} can be rewritten in the matrix form as below:
	\begin{equation}\label{b2}
	A_1(U)\partial_{\xi} ( p,\theta)^\top+ A_2\partial_{\eta} ( p, \theta )^\top + a(U)= 0,
	\end{equation}
	where $a(U) = \Big( \displaystyle\frac{\cos\theta}{q}g, \,  - \displaystyle\frac{\sin\theta}{\rho q^3}g\Big)^\top$,
	
	\[ A_1(U) = \displaystyle\frac{1}{\rho q}\begin{pmatrix}
	-\sin\theta & \rho q^2\cos\theta \\
	\displaystyle\frac{M^2 -1}{\rho q^2}\cos\theta & -\sin\theta
	\end{pmatrix},\quad 	 A_2 = \begin{pmatrix}
	1+H_1(g,\sigma) & 0 \\
	0 & 1+H_1(g,\sigma)
	\end{pmatrix}.\]
	Direct calculations follow that the eigenvalues of \eqref{b2} are
%$A_1(U)-\lambda A_2$ are
	\begin{equation}
	\lambda_\pm = \frac{- \sin\theta \pm \sqrt{M^2 -1}\cos\theta}{\rho q(1+H_1(g,\sigma))}.
	\end{equation}
For supersonic flows, $\lambda_\pm$ are real since the Mach number $M>1$, which implies that the system $\eqref{b2}$ is hyperbolic, while for subsonic flows, $\lambda_\pm $ are a pair of conjugate complex number since the Mach number $M<1$, which implies that the system $\eqref{b2}$ is elliptic. Therefore, the system \eqref{eq260000}-\eqref{b1} is hyperbolic as $ M>1 $, while it is elliptic-hyperbolic composite as $ M<1 $.
\end{rem}

Let
$${\Gamma}_{s} \defs \{ ({{\xi}}, {{\eta}})\in \mathbb{R}^2 : {\xi} = {\psi}({\eta}),\,\,\, 0 <{\eta} <1 \}$$
be the position of a shock front under the transformations \eqref{eq17} and \eqref{eq250000}, then the R-H conditions \eqref{eq2}-\eqref{eq4}
 across the shock front are reformulated as
\begin{align}
&\Big[\displaystyle\frac{1}{\rho u}\Big] + \Big(1+H_1(g,\sigma)\Big)\psi' \Big[\displaystyle \frac{v}{u}\Big] = 0,\label{eq26}\\
&\Big[u + \displaystyle \frac{p}{\rho u}\Big] + \Big(1+H_1(g,\sigma)\Big)\psi'\Big[\displaystyle\frac{p v}{u}\Big]= 0,\label{eq27}\\
&[v] - \Big(1+H_1(g,\sigma)\Big)\psi'[p]=0.\label{eq28}
\end{align}
Applying the equation $\eqref{eq28}$, one can eliminate the quantity $\psi'$ in the equations $\eqref{eq26}$ and $\eqref{eq27}$ respectively, which yields that
\begin{align}
G_1 (U_+, U_-) \defs & \Big[\displaystyle\frac{1}{\rho u}\Big][p] + \Big[\displaystyle \frac{v}{u}\Big] [v]= 0,\label{eq30}\\
G_2 (U_+, U_-) \defs & \Big[u + \displaystyle \frac{p}{\rho u}\Big][p]+ \Big[\displaystyle\frac{p v}{u}\Big][v]= 0,\label{eq31}\\
G_3 (U_+, U_-; \psi') \defs & [v] - \Big(1+H_1(g,\sigma)\Big)\psi'[p]=0.\label{eq33}
\end{align}

\begin{figure}[!h]
	\centering
	\includegraphics[width=0.6\textwidth]{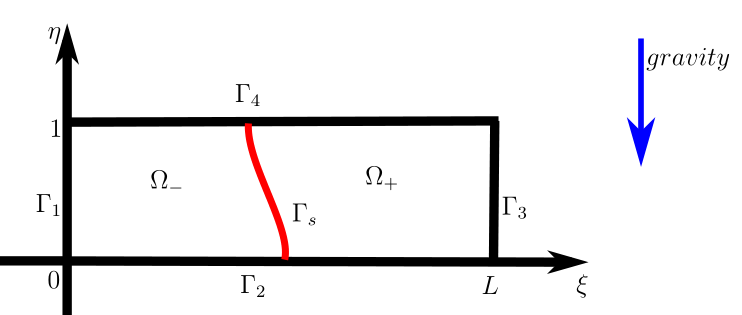}
	\caption{The transonic shock flows in the Lagrangian  coordinate.\label{fig:3}}
\end{figure}

Under the transformations \eqref{eq17} and \eqref{eq250000}, the domain $\mathcal{D}$ becomes
\begin{equation}\label{eq39}
{\Omega }:  = \{ (\xi, \eta)\in \mathbb{R}^2 : 0 < \xi < L, \,0 < \eta < 1\}.
\end{equation}
It is separated by the shock front ${\Gamma}_{s} $
into two parts:
the supersonic region and subsonic region respectively, denoted by,
\begin{equation}\label{eq:supersonic_region_Lagrange}
{\Omega}_- = \{ ({\xi}, {\eta})\in \mathbb{R}^2 : 0 < {\xi} < \psi({\eta}), \,0 < {\eta} < 1\},
\end{equation}
\begin{equation}
{\Omega}_+ = \{ ({\xi}, {\eta})\in \mathbb{R}^2 : \psi({\eta}) < {\xi} < L, \, 0 < {\eta} < 1\}.
\end{equation}
Moreover, the boundaries $E_0, W_0, E_L, W_1$ become
\begin{align}\label{eq40}
&\Gamma_1 = \{ (\xi, \eta)\in \mathbb{R}^2 :  \xi = 0 , \,0 < \eta < 1\},\\
&{\Gamma}_2= \{ (\xi, \eta)\in \mathbb{R}^2 : 0 < \xi < L, \, \eta = 0\},\\
&\Gamma_3 = \{ (\xi, \eta)\in \mathbb{R}^2 :  \xi = L, \,0 < \eta < 1\},\\
&{\Gamma}_4 = \{ (\xi, \eta)\in \mathbb{R}^2 : 0 < \xi < L, \, \eta = 1\},
\end{align}
respectively( see Figure \ref{fig:3}).
%\begin{figure}[H]
%\centering
%\includegraphics[width=8cm]{222}
%\caption{\label{fig:test}}
%\end{figure}\par

%\subsection{}

Then, under the transformations \eqref{eq17} and \eqref{eq250000}, the small perturbation problem {\bf {$\llbracket \textit{FBP}\rrbracket $}} is reformulated as the problem below.
\vskip 0.5cm

\textbf{The free boundary problem {\bf {$\llbracket\textit{FBPL}\rrbracket$}}}

\vskip 0.3cm
Try to determine a transonic shock solution $ \pr{ U_-(\xi,\eta);\ {U}_+(\xi,\eta);\ \psi(\eta)} $ such that %( see Figure \ref{fig:3}) such that:

\begin{enumerate}
	\item $ U_{-}(\xi,\eta) $ satisfies the equations  \eqref{eq260000}-\eqref{b1} in $ \Omega_- $ and the following initial-boundary conditions
\begin{align}
&U_-= U_{\mr{in}}(Y_0(\eta;g,\sigma)),&\quad&\text{on} \quad \Gamma_1,\label{eq44}\\ %\bar{U}_- + ( \sigma P_I(Y_-(0,\eta;g,\sigma)), 0, 0)
&\theta_- = 0 , &\quad &\text{on} \quad (\Gamma_2\cup \Gamma_4)\cap\overline{\Omega_-}\label{eq45},   %\cap\overline{\Omega_-}
\end{align}
where
\begin{align}
  Y_0(\eta;g,\sigma) = \displaystyle\frac{1}{1+H_1(g,\sigma)}\cdot \int_0^\eta \displaystyle\frac{1}{q_0 \rho_-(0,s) } \dif s,
\end{align}
and,
\begin{align}
{ P}_{\mr{in}} (Y_0(\eta;g,\sigma)) \defs & \bar{p}_-(\eta) +P_I^\sharp(Y_0(\eta;g,\sigma)),
\end{align}
with
\begin{align}
\bar{p}_-(\eta) \defs & p_0 - g\cdot \frac{1}{(1+H_1(g,0))q_0}\eta,\\
P_I^\sharp(Y_0(\eta;g,\sigma)) \defs& \sigma \cdot P_I (Y_0(\eta;g,\sigma))\notag\\
 &+g\cdot \frac{H_1(g,\sigma) - H_1(g,0) }{\big(1+H_1(g,0)\big)\big(1+H_1(g,\sigma)\big)q_0}\eta \notag\\
  &+ g\sigma\cdot\int_0^{Y_0(\eta;g,\sigma)} P_I(\tau)\dif \tau;\label{PIsharp}
\end{align}

\item  $ U_{+}(\xi,\eta) $ satisfies the equations \eqref{eq260000}-\eqref{b1} in $ \Omega_+ $ and the following boundary conditions:
\begin{align}
&\theta_+ = 0 , &\quad &\text{on} \quad (\Gamma_2\cup \Gamma_4)\cap\overline{\Omega_+},\label{eq47}\\ %\cap\overline{\Omega_+}
&p_+ = P_{\mr{out}}(Y_L(\eta;g,\sigma);g,\sigma) ,&\quad &\text{on} \quad \Gamma_3,\label{eq48}  %\bar{p}_+(\eta) +  P_e (Y_+(L,\eta;g,\sigma);g,\sigma)
\end{align}
where
\begin{align}
  Y_L(\eta;g,\sigma) = \displaystyle\frac{1}{1+H_1(g,\sigma)}\cdot \int_0^\eta \displaystyle\frac{1}{(\rho_+ q_+ \cos \theta_+)(L,s)} \dif s,
\end{align}
and,
\begin{align}
 P_{\mr{out}}(Y_L(\eta;g,\sigma);g,\sigma)=& \bar{p}_+(\eta) + P_e^\sharp(Y_L(\eta;g,\sigma); g,\sigma),
\end{align}
with
\begin{align}
\bar{p}_+(\eta) \defs & p_0 q_0^2 - g\cdot\frac{ q_0}{1+H_1(g,0)}\eta,\\
P_e^\sharp(Y_L(\eta;g,\sigma); g,\sigma)\defs&   P_e (Y_L(\eta;g,\sigma); g,\sigma)\notag\\
&+g\cdot \frac{\big(H_1(g,\sigma) - H_1(g,0)\big)q_0 }{\big(1+H_1(g,0)\big)\big(1+H_1(g,\sigma)\big)}\eta \notag\\
&+ g\cdot
  \int_0^{Y_L(\eta;g,\sigma)} P_e(\tau;g,\sigma)\dif \tau;\label{Pesharp}
\end{align}

\item On the shock front $ \Gamma_{s} $, $(U_-(\xi,\eta) , U_+(\xi, \eta))$ satisfies the R-H conditions  $\eqref{eq26}$-$\eqref{eq28}$.%, we denote above free boundary problem as {\bf {$\textit{FBPL}$}}.
\end{enumerate}

\begin{rem}
  Under the transformations \eqref{eq17} and \eqref{eq250000}, the states $\bar{U}_\pm (y)$ for the background solution become:
   \begin{align}
    \bar{U}_-(\eta) &= (\bar{p}_-(\eta),\bar{\theta}_-(\eta), \bar{q}_-(\eta))^\top \notag\\
    & \defs \pr{\bar{p}_-(\eta),\ 0,\ q_0}^\top, \label{rho-}\\
%    \bar{p}_-(\eta) =\bar{\rho}_-(\eta) =  p_0 - \displaystyle\frac{g}{(1+ H_1(g,0))q_0}\eta, \quad \bar{\theta}_- = 0, \quad \bar{q}_- = q_0,
   \bar{U}_+(\eta) &= (\bar{p}_+(\eta),\bar{\theta}_+(\eta), \bar{q}_+(\eta))^\top \notag\\
   & \defs \pr{\bar{p}_-(\eta) {q}_0^2,\ 0,\ \frac{1}{{q}_0}}^\top. \label{2.35}
%   \bar{p}_ + (\eta)= \bar{p}_-(\eta) {q}_0^2,\quad \bar{\theta}_+ = 0,\quad\bar{q}_+ = \frac{1}{{q}_0}
\end{align}
Moreover,
\begin{align}\label{q2}
\partial_\eta P_I^\sharp(0)= \partial_\eta P_I^\sharp(1) =g\cdot \frac{H_1(g,\sigma) - H_1(g,0) }{\big(1+H_1(g,0)\big)\big(1+H_1(g,\sigma)\big)q_0}.
\end{align}
It shows that the compatibility conditions hold for the hyperbolic system in $\Omega_-$.

\end{rem}

\subsection{The free boundary problem for the initial approximation.}\

\begin{figure}[!h]
	\centering
	\includegraphics[width=0.6\textwidth]{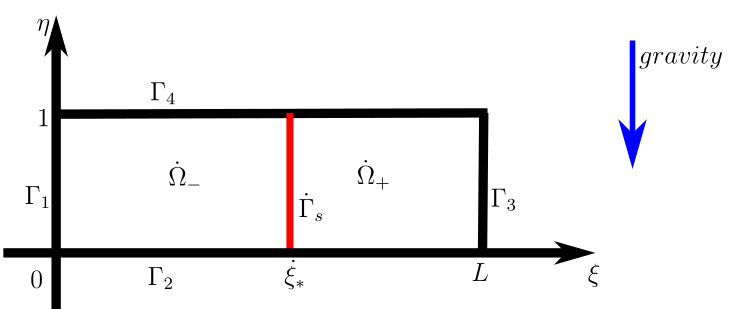}
	\caption{The domain for the initial linearized problem.\label{fig:4}}
\end{figure}

To solve the free boundary problem {\bf {$\llbracket\textit{FBPL}\rrbracket$}}, one of the key issue is to determine the position of the shock front $ \Gamma_{s} $.
However, there is no information one can get from the unperturbed shock solution.
Motivated by the ideas introduced in \cite{FB63} to determine an approximating position of the shock front, we shall propose a free boundary problem for the linearized Euler system as below, whose solution gives an initial approximation of the shock solution.

Assume that the initial approximating position of the shock front is
\begin{equation}\label{eq79}
\dot{\Gamma}_s= \{(\xi,\eta) : \xi = \dot{\xi}_*, \,\,\,0< \eta <1\},
\end{equation}
where $0<\dot{\xi}_*<L$ is unknown and will be determined later (see Figure \ref{fig:4}).
Then the whole domain $ \Omega $ is divided by $ \dot{\Gamma}_s $ into two parts: the supersonic region $\dot{\Omega}_-$ and subsonic region $\dot{\Omega}_+$, denoted respectively
by
\begin{align}\label{eq80}
&\dot{\Omega}_- = \{ (\xi, \eta)\in \mathbb{R}^2 : 0 < \xi < \dot{\xi}_*, \,\,\,0 < \eta < 1\},\\
&\dot{\Omega}_+ = \{ (\xi, \eta)\in \mathbb{R}^2 : \dot{\xi}_* <\xi < L,\,\,\, 0 < \eta < 1\}.
\end{align}
Let $ \dot{U}_- =  (\dot{p}_-, \dot{\theta}_-, \dot{q}_-)^\top$ be the
initial approximate supersonic flow ahead of the shock front governed by the following linearized Euler system at the supersonic state $ \bar{U}_- $ in $ \dot{\Omega}_- $:
\begin{align}
  &\Big(1+H_1(g,0)\Big)\partial_\eta \dot{p}_- + \bar{q}_- \partial_\xi \dot{\theta}_- - \frac{g}{\bar{q}_-^2} \dot{q}_- = H_{2_-}(g,\sigma),\label{eq63}\\
 &\Big(1+H_1(g,0)\Big)\partial_\eta \dot{\theta}_- - \frac{1 - \bar{M}_-^2}{\bar{\rho}_-^2{\bar{q}_-^3}} \partial_\xi \dot{p}_- - \frac{g}{\bar{\rho}_- \bar{q}_-^3}\dot{\theta}_- = 0,\label{eq64}\\
   &\bar{q}_-\partial_\xi \dot{q}_- + \frac{1}{\bar{\rho}_-}\partial_\xi \dot{p}_- + g \dot{\theta}_- = 0,\label{eq65}
\end{align}
where
\begin{align*}
  H_{2_-}(g,\sigma) = g\cdot \displaystyle\frac{H_1(g,\sigma)- H_1(g,0)}{q_0(1+H_1(g,0))}.
\end{align*}
Let $ \dot{U}_+ =  ( \dot{p}_+, \dot{\theta}_+,\dot{q}_+ )^\top$ be the initial approximate subsonic flow behind of the shock front governed by the following linearized Euler system at the subsonic state $ \bar{U}_+ $ in $ \dot{\Omega}_+ $:
\begin{align}
   &\Big(1+H_1(g,0)\Big)\partial_\eta \dot{p}_+ + \bar{q}_+ \partial_\xi \dot{\theta}_+ - \frac{g}{\bar{q}_+^2} \dot{q}_+  = H_{2_+}(g,\sigma),\label{eq66}\\
 &\Big(1+H_1(g,0)\Big)\partial_\eta \dot{\theta}_+ - \frac{1 - {\bar{M}_+}^2}{\bar{\rho}_+^2{{\bar{q}_+}^3}} \partial_\xi \dot{p}_+ - \frac{g}{\bar{\rho}_+ {\bar{q}_+}^3}\dot{\theta}_+ = 0,\label{eq67}\\
  &\bar{q}_+ \partial_\xi \dot{q}_+ + \frac{1}{\bar{\rho}_+}\partial_\xi \dot{p}_+ + g \dot{\theta}_+ = 0,\label{eq68}
\end{align}
where
 \begin{align}
   H_{2_+}(g,\sigma) = g\cdot\displaystyle\frac{\big(H_1(g,\sigma)- H_1(g,0)\big) q_0}{1+H_1(g,0)}.
 \end{align}

Then the following free boundary problem will be employed to determine the initial approximation $(\dot{U}_-$, $\dot{U}_+$, $\dot{\xi}_*)$, and together with the updated approximating shock profile $\dot{\psi}'$.

\textbf{The free boundary problem {\bf {$\llbracket\textit{IFBPL}\rrbracket$}} for the initial approximation}

\vskip 0.3cm

Try to determine $ (\dot{U}_{-}(\xi,\eta),\ \dot{U}_{+}(\xi,\eta),\ \dot{\xi}_*;\ \dot{\psi}'(\eta)) $ in $ \Omega $ such that:
\begin{enumerate}
	\item $ \dot{U}_{-}(\xi,\eta) $ satisfies the linearized equations \eqref{eq63}-\eqref{eq65} in $ \dot\Omega_- $, with the initial-boundary conditions
\begin{align}
   &\dot{U}_- = ( P_I^{*}({Y}_0(\eta;g,\sigma)) , 0,0)^\top,&\quad &\text{on}\quad \Gamma_1,\label{eq82}\\
   &\dot{\theta}_- = 0 ,&\quad &\text{on}\quad (\Gamma_{2}\cup\Gamma_{4})\cap\overline{\dot\Omega_-},\label{eq82B}
  \end{align}
where
\begin{align}\label{qc2}
 P_I^{*}({Y}_0(\eta;g,\sigma))= \frac{1+H_1(g,\sigma)}{1+H_1(g,0)} P_I^\sharp({Y}_0(\eta;g,\sigma))\defs P_I^{*} (\eta),
\end{align}
with $P_I^\sharp$ being defined in \eqref{PIsharp};
	\item   $\dot{U}_+$ satisfies the linearized equations $\eqref{eq66}$-$\eqref{eq68}$ in $ \dot\Omega_+ $, with the boundary conditions
\begin{align}
   &\dot{\theta}_+ = 0 ,&\quad &\text{on} \quad  (\Gamma_{2}\cup\Gamma_{4})\cap\overline{\dot\Omega_+},\label{eq83}\\
   &\dot{p}_+ = P_e^\sharp({Y}_L(\eta;g,0);g,\sigma),&\quad &\text{on} \quad \Gamma_3,\label{eq83a}
   \end{align}
where $P_e^\sharp$ is defined in \eqref{Pesharp} and
\begin{equation}\label{eq141}
  {Y}_L(\eta;g,0) = \displaystyle\frac{1}{1+H_1(g,0)}\cdot \int_0^\eta \displaystyle\frac{1}{\bar{q}_+\bar{\rho}_+(s)} \dif s;
\end{equation}

\item On the shock front $ \dot{\Gamma}_s $, $(\dot{U}_{-},\dot{U}_{+})$ satisfies the following linearized R-H conditions: 	
	\begin{align}
&{\mathbf{\alpha}}_{j+} \cdot {\dot{U}}_+ + {\mathbf{\alpha}}_{j-} \cdot {\dot{U}}_- = 0,  \quad j = 1,2,\label{eq69}\\
  &{\mathbf{\alpha}}_{3+} \cdot {\dot{U}}_+ + {\mathbf{\alpha}}_{3-} \cdot {\dot{U}}_- - (1+H_1(g,0))[\bar{p}] {\dot{\psi}}' = 0,\label{eq74}
\end{align}
where
\begin{align}\label{eq70}
{\mathbf{\alpha}}_{j\pm} = {{\nabla_{{U}_\pm}}}G_i(\bar{U}_+,\bar{U}_-), \quad {\mathbf{\alpha}}_{3\pm} = {{\nabla_{{U}_\pm}}}G_3(\bar{U}_+,\bar{U}_-;0).
\end{align}
\end{enumerate}

\begin{rem}
 Direct computation yields that
\begin{align}
&\mathbf{\alpha}_{1\pm} = \pm\frac{[\bar{p}]}{\bar{\rho}_\pm \bar{q}_\pm}\cdot\Big(-\frac{1}{\bar{\rho}_\pm {\bar{c}}_\pm^2},\, 0, \, -\frac{1}{ \bar{q}_\pm}\Big)^\top,\label{x62}\\
&\mathbf{\alpha}_{2\pm} = \pm\frac{[\bar{p}]}{\bar{\rho}_\pm \bar{q}_\pm}\cdot\Big(1- \frac{\bar{p}_\pm}{\bar{\rho}_\pm \bar{c}_\pm^2},\, 0 ,\, \bar{\rho}_\pm \bar{q}_\pm - \frac{\bar{p}_\pm}{\bar{q}_\pm}\Big)^\top,\label{x61}\\
&\mathbf{\alpha}_{3\pm} = \pm \Big(0,\, \bar{q}_\pm,\, 0\Big)^\top.\label{x60}
\end{align}
\end{rem}

\begin{rem}
By applying \eqref{rho-}-\eqref{2.35} and $p_0 q_0 = 1$, it follows that
  \begin{align}
    {Y}_L(\eta;g,0) ={Y}_0(\eta;g,0), \quad  {Y}_L(\eta;0,0) ={Y}_0(\eta;0,0)=\eta.
  \end{align}
Denote
\begin{align*}
\dot{P}_e^\sharp(\eta)\defs P_e^\sharp({Y}_L(\eta;g,0);g,\sigma).
\end{align*}
Then applying \eqref{PE}, \eqref{PIsharp} and \eqref{Pesharp}, one has
\begin{align*}
   \dot{P}_e^\sharp(\eta) =&q_0^2  {P}_I^\sharp ({Y}_0(\eta;g,0)) + g\sigma\cdot q_0^2 {P}_E ({Y}_L(\eta;g,0)) \notag\\
   &+ g\cdot \int_{0}^{{Y}_0(\eta;g,0)}\Big( P_e(\tau;g,\sigma)- \sigma\cdot q_0^2 P_I(\tau)  \Big)\dif \tau\notag\\
   =&q_0^2  {P}_I^\sharp ({Y}_0(\eta;g,0)) + g\sigma \cdot q_0^2 {P}_E ({Y}_L(\eta;g,0)) + g^2\sigma \cdot q_0^2  \int_{0}^{{Y}_0(\eta;g,0)}P_E(\tau)  \dif \tau.
\end{align*}
Thus, one can deduce that
 \begin{align}\label{iniPE}
   \dot{P}_e^\sharp(\eta) = q_0^2 {P}_I^{*}(\eta) + g\sigma\cdot q_0^2 {P}_E(\eta)+ O(1)g^2 \sigma + O(1)\sigma^2,
 \end{align}
 where $P_I^*$ is defined in \eqref{qc2} and $O(1)$ is a bounded function and depends on $p_0, q_0$, $P_I$, $P_E$, $P_I^{'}$ and $P_E^{'}$.

\end{rem}

\subsection{Main results}\
Before we state the main result, some function spaces will be first introduced.

In the supersonic region, it is natural to introduce the classical H\"{o}lder spaces. For any bounded domain $\Omega \subset \mathbb{R}^n$, $m > 0 $ be an integer, and $0< \alpha < 1$, $\mcc^{m,\alpha}(\Omega)$ denotes the classical H\"{o}lder spaces with the index $(m, \alpha)$ for functions with continuous derivatives up to $m$-th order, equipped with the classical $\mcc^{m,\alpha}(\Omega)$ norm:
\begin{equation}
   \|u\|_{\mcc^{m,\alpha}(\Omega)} : = \sum_{|\mathbf{m}|\leq m} \sup\limits_{\mathbf{x}\in\Omega}|D^{\mathbf{m}} u(\mathbf{x})|+ \sum_{|\mathbf{m}| =  m} \sup\limits_{\mathbf{x},\mathbf{y}\in\Omega; \mathbf{x}\neq \mathbf{y}}\frac{|D^{\mathbf{m}} u(\mathbf{x}) - D^{\mathbf{m}} u(\mathbf{y})|}{|\mathbf{x}-\mathbf{y}|^\alpha},
\end{equation}
where $D^{\mathbf{m}} = \partial_{x_1}^{m_1}\partial_{x_2}^{m_2}\cdots\partial_{x_n}^{m_n}$,
$\mathbf{m}=(m_1,m_2,\ldots, m_n)$ is a multi-index with $m_i\geq 0$ be an integer and $|\mathbf{m}| = \sum\limits_{i =1}^n m_i$.

In the subsonic region, since the boundary of the domain has corner singularities, the Sobolev spaces $W_\beta^{s}(\Omega)$ with $1 \leq \beta < \infty$ will be employed. The index $s$ will take real value, as defined in \cite{PG}, for the trace function on the boundary. Let $s = m+\alpha$, where $m$ is a nonnegative integer and $0<\alpha<1$.
Define
\begin{align}
  \|u\|_{W_\beta^s(\Omega)}: = \left( \|u\|_{W_\beta^m(\Omega)}^\beta + \sum_{|\mathbf{m}| = m}  \int\int_{\Omega\times\Omega}\frac{|D^{\mathbf{m}}u(x) - D^{\mathbf{m}}u(y)|^\beta}{|x - y|^{n+ \alpha \beta}}\dif x\dif y    \right)^{\frac{1}{\beta}}.
\end{align}
It should be pointed that for any $u\in W_\beta^1(\Omega)$, its trace on the boundary belongs to $W_\beta^{1-\frac{1}{\beta}}(\partial\Omega)$.

Moreover, since the Euler system for subsonic flows is elliptic-hyperbolic composite, for the flow state $ U=(p,\theta,q)^\top $, the function spaces for $(p,\theta)^\top$ are different from $q $.
Define
\begin{align}\label{eq099}
 \|{U} \|_{(\dot{\Omega}_+;\dot{\Gamma}_s)}: = \| {p} \|_{W_\beta^1(\dot{\Omega}_+)}+ \|{\theta} \|_{W_\beta^1(\dot{\Omega}_+)}
+ \| {q} \|_{\mcc^0(\dot{\Omega}_+)}+ \|{q} \|_{W_\beta^{1-\frac{1}{\beta}}(\dot{\Gamma}_s)}.
\end{align}
Since the shock front $\Gamma_{{s}}\defs \{{\xi} = {\psi}({\eta})\}$ is a free boundary, then the following coordinate transformation will be employed
\begin{align*}
\mathcal{T} : \begin{cases}
\tilde{\xi} = L + \displaystyle\frac{L - \dot{\xi}_*}{L - {\psi}(\eta)}(\xi - L),\\
\tilde{\eta} = \eta,
\end{cases}
\end{align*}
with the inverse
\begin{align*}
\mathcal{T}^{-1} : \begin{cases}
\xi = L + \displaystyle\frac{L -  {\psi}(\tilde{\eta})}{L - \dot{\xi}_*}(\tilde{\xi} - L),\\
\eta = \tilde{\eta}.
\end{cases}
\end{align*}
Obviously, under this transformation, the free boundary $\Gamma_{{s}}$ is changed into the fixed boundary $\dot{\Gamma}_{{s}}$. Correspondingly, the domain ${\Omega}_+$ (see Figure \ref{fig:3}) is transformed into the fixed domain $\dot{\Omega}_+$.

Therefore, we define the norm of $U$ in the domain ${\Omega}_+$ as below:
\begin{equation}
\|{U} \|_{({\Omega}_+;{\Gamma}_s)}: = \| {U}\circ \mathcal{T}^{-1}\|_{(\dot{\Omega}_+;\dot{\Gamma}_s)}.
\end{equation}

In this paper, we will establish the existence of the transonic shock in the flat nozzle by proving the following theorems.

\begin{thm}\label{thm:initial_approx_existence}
	 Let $\beta > 2$. Suppose that \eqref{PCI}-\eqref{PE} hold and $\bar{\xi}_*$ satisfies \eqref{inipo}. There exist a sufficiently small constant $ g_0>0 $ such that for any $0<g<g_0$ and $0< \sigma \leq g^3$,
there exists a unique solution $ (\dot{U}_{-}(\xi,\eta),\ \dot{U}_{+}(\xi,\eta),\ \dot{\xi}_*;\ \dot{\psi}'(\eta)) $ to the free boundary problem {\bf {$\llbracket\textit{IFBPL}\rrbracket$}}, with the unknown constant $\dot{\xi}_*\in (0, \xi_1)$. Moreover, the following estimates hold:
	\begin{align}
&|\dot{\xi}_*  - \bar{\xi}_*|  \leq \dot{C}_* g,\label{relationp}\\
&\|\dot{U}_-\|_{\mcc^{2,\alpha}(\dot{\Omega}_-)} \leq \dot{C}_{-}\sigma,\label{eq061}\\
&\| \dot{U}_+ \|_{(\dot{\Omega}_+;\dot{\Gamma}_{{s}})} +\| \dot{\psi}'\|_{W_\beta^{1-\frac{1}{\beta}}(\dot{\Gamma}_s)}
	 \leq \dot{C}_+ \sigma,\label{eq089}
	\end{align}
 where the constant $\dot{C}_*$ depends on $p_0, q_0$, $P_I$, $P_E$ and $\|{\theta}^\aleph\|_{\mathcal{C}^0((0,\bar{\xi}_*)\times (0,1))}$, the constant
   $\dot{C}_-$ depends on $p_0, q_0$, $P_I$ and $L$, the constant
    $\dot{C}_+$ depends on $p_0, q_0$, $P_I$, $P_E$, $L$, $\beta$ and $\dot{\xi}_*$.
\end{thm}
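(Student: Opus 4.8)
The plan is to decouple the free boundary problem {\bf{$\llbracket\textit{IFBPL}\rrbracket$}} by fixing the unknown shock location $\dot\xi_*$ as a parameter, solving the fixed-boundary problems in $\dot\Omega_-$ and $\dot\Omega_+$ separately, and then closing the loop via the linearized R-H conditions. First I would solve the supersonic problem \eqref{eq63}--\eqref{eq65} with the data \eqref{eq82}--\eqref{eq82B} in $\dot\Omega_-$: since the linearized Euler system is hyperbolic at $\bar U_-$ and the coefficients are smooth functions of $\eta$ only (with $g$-dependent perturbations), a standard characteristic/energy argument gives a unique $\dot U_-\in\mcc^{2,\alpha}(\dot\Omega_-)$, with the estimate $\|\dot U_-\|_{\mcc^{2,\alpha}(\dot\Omega_-)}\leq \dot C_-\sigma$ following from \eqref{PCI}, \eqref{q2} (which supplies the compatibility at $\eta=0,1$), and the smallness of $\sigma$ relative to $g$. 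Evaluating $\dot U_-$ on $\dot\Gamma_s=\{\xi=\dot\xi_*\}$ then feeds the R-H conditions \eqref{eq69}--\eqref{eq74}: using the explicit vectors $\mathbf\alpha_{j\pm}$ from \eqref{x62}--\eqref{x60}, equations \eqref{eq69} express $(\dot p_+,\dot\theta_+)$ on $\dot\Gamma_s$ in terms of $\dot U_-$ (the $q$-components decouple appropriately because the middle entries of $\mathbf\alpha_{j\pm}$ vanish), while \eqref{eq74} determines $\dot\psi'$ once $\dot U_+$ is known.

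Next I would treat the subsonic boundary value problem for $\dot U_+$ in $\dot\Omega_+$. The system \eqref{eq66}--\eqref{eq68} is elliptic-hyperbolic composite: \eqref{eq66}--\eqref{eq67} form an elliptic first-order system for $(\dot p_+,\dot\theta_+)$ coupled through the $g$-order zeroth-order terms to the transport equation \eqref{eq68} for $\dot q_+$. This is exactly the setting of the preliminary results announced for Section 3, so I would invoke that machinery: the boundary conditions are $\dot\theta_+=0$ on $\Gamma_2\cup\Gamma_4$, $\dot p_+=\dot P_e^\sharp$ on $\Gamma_3$ (via \eqref{eq83a} and \eqref{iniPE}), and the R-H-derived Dirichlet/oblique data for $(\dot p_+,\dot\theta_+)$ on $\dot\Gamma_s$. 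The elliptic sub-problem is overdetermined, so it is solvable only if a scalar solvability condition holds; by the construction in Section 3 this condition takes the form of an integral identity that, after substituting the explicit R-H data and \eqref{iniPE}, reduces to an equation of the shape $\mathcal R^\natural_{g\sigma}(\dot\xi_*)+(\text{h.o.t.})=\mathcal P^\natural_{g\sigma}$, where the leading balance is between the $g\sigma$-terms coming from $\Theta_i$ (through $\theta^\aleph$, hence $\dot\theta_-$) and the $g\sigma$-term $g\sigma q_0^2 P_E$ in $\dot P_e^\sharp$. Under hypothesis \eqref{thmR} the function $\mathcal R^\natural_{g\sigma}$ takes the value $\mathcal P^\natural_{g\sigma}$ at some $\bar\xi_*\in(0,\xi_1)$; an implicit function / monotonicity argument, using \eqref{P_I}--\eqref{P1.23} to guarantee the requisite strict sign of the derivative $\frac{d}{d\xi}\mathcal R^\natural_{g\sigma}$ near $\bar\xi_*$ (this is where the break-down of the pressure–gravity balance enters), then yields a unique $\dot\xi_*$ solving the full solvability condition, with $|\dot\xi_*-\bar\xi_*|\leq\dot C_* g$ because the higher-order corrections are $O(g^2\sigma)+O(\sigma^2)=o(g\sigma)$ by \eqref{gsigma}. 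With $\dot\xi_*$ fixed, the elliptic sub-problem has a unique solution in the weighted Sobolev space $W_\beta^1$ (the corner singularities at the four corners of $\dot\Omega_+$ force the use of $W_\beta^s$ as in \cite{PG}), giving $\|\dot U_+\|_{(\dot\Omega_+;\dot\Gamma_s)}\leq\dot C_+\sigma$, and then $\dot\psi'$ from \eqref{eq74} inherits the same bound \eqref{eq089}.

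The main obstacle is the derivation and analysis of the solvability condition for the coupled elliptic-hyperbolic sub-problem in $\dot\Omega_+$ in the presence of gravity. Unlike the gravity-free case of \cite{FB63}, the zeroth-order terms $-\frac{g}{\bar q_+^2}\dot q_+$ and $-\frac{g}{\bar\rho_+\bar q_+^3}\dot\theta_+$ couple the elliptic part to the transport equation, and the coefficients vary with $\eta$ since $\bar U_+$ depends on $\eta$; one must show that the auxiliary system alluded to in the introduction disentangles these terms well enough that the solvability condition is still a single scalar equation whose leading part is the clean $g\sigma$-balance encoded in $\mathcal R^\natural_{g\sigma}$ versus $\mathcal P^\natural_{g\sigma}$. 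Carefully tracking the orders — that the gravity–coefficient coupling contributes only $O(g^2\sigma)$ corrections, controlled by \eqref{gsigma} — and verifying the strict monotonicity of $\mathcal R^\natural_{g\sigma}$ from the structural hypotheses \eqref{P_I}--\eqref{P1.23} on $\wp$, is the technical heart of the argument; everything else (the hyperbolic solve in $\dot\Omega_-$, the algebra of the linearized R-H relations, and the weighted-Sobolev elliptic estimates once $\dot\xi_*$ is pinned down) is comparatively routine.
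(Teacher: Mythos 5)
Your proposal follows essentially the same route as the paper: solve the hyperbolic problem for $\dot{U}_-$ first, reduce the linearized R--H conditions to explicit data on $\dot{\Gamma}_s$, invoke the Section~3 theory for the elliptic--hyperbolic composite system (with the auxiliary divergence-form reformulation via the integrating factor) in $\dot{\Omega}_+$, and use its scalar solvability condition together with the sign of $\theta^\aleph$ and the implicit function theorem to pin down $\dot{\xi}_*$ near $\bar{\xi}_*$ with $|\dot{\xi}_*-\bar{\xi}_*|\leq \dot{C}_*g$; this is exactly the content of Lemmas \ref{w0}, \ref{e1}, \ref{posi} and \ref{lem5}.

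One slip should be corrected. Because the middle ($\theta$-) entries of $\mathbf{\alpha}_{1\pm}$ and $\mathbf{\alpha}_{2\pm}$ vanish, the two conditions \eqref{eq69} determine $(\dot{p}_+,\dot{q}_+)$ on $\dot{\Gamma}_s$ — not $(\dot{p}_+,\dot{\theta}_+)$ as you wrote twice — i.e.\ they supply Dirichlet data for the pressure in the elliptic sub-system and initial data for the transport equation for $\dot{q}_+$, while $\dot{\theta}_+$ is left free on the shock front and only enters through \eqref{eq74} to produce $\dot{\psi}'$. It is precisely the resulting pair of Dirichlet conditions for $\dot{p}_+$ on $\dot{\Gamma}_s$ and on $\Gamma_3$, combined with $\dot{\theta}_+=0$ on $\Gamma_2\cup\Gamma_4$, that makes the elliptic part overdetermined by exactly one scalar condition \eqref{x30}, which becomes \eqref{eq117}; if $(\dot{p}_+,\dot{\theta}_+)$ were both prescribed on $\dot{\Gamma}_s$ the problem would carry full Cauchy data there and the transport equation would have no initial data, so the structure you rely on downstream would not hold. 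With that correction your outline is the paper's argument.
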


With above preparations, the main result can be stated as follows:
\begin{thm}\label{ppo}
Under the assumptions of Theorem \ref{thm:initial_approx_existence}, there exists a unique solution $(U_-, U_+, \xi_*; \psi')$ to the free boundary problem {\bf {$\llbracket\textit{FBPL}\rrbracket$}} and the following estimates hold:
\begin{align}
  &|\psi(1) - \dot{\xi}_*|\leq C_s \frac{\sigma}{g},\quad \| {\psi}'\|_{W_\beta^{1-\frac{1}{\beta}}({\Gamma}_s)}\leq C_{{s}} \sigma,\\
	&\| U_- - \bar{U}_- \|_{\mcc^{2,\alpha}(\Omega_-)}  \leq C_-\sigma,\\
	&\| {U}_+ - \bar{U}_+ \|_{({\Omega}_+;{\Gamma}_s)} \leq C_+ \sigma,\\
	&\| U_- - (\bar{U}_- + \dot{U}_-)\|_{{\mcc}^{1,\alpha}({\Omega})} \leq \frac12\sigma g^{\frac{3}{2}},\\
	&\| U_+\circ \mathcal{T}^{-1}- (\bar{U}_+ + \dot{U}_+)\|_{(\dot{\Omega}_+;\dot{\Gamma}_s)}  \leq \frac12\sigma g^{\frac{3}{2}},\\
&\|{\psi}' - \dot{\psi}'\|_{W_\beta^{1-\frac{1}{\beta}}(\dot{\Gamma}_s)}\leq \frac12\sigma g^{\frac{3}{2}},
\end{align}
where the constants $C_s$ and $C_\pm$ depend on $p_0, q_0$, $P_I$, $P_E$, $L$, $\beta$ and $\|{\theta}^\aleph\|_{\mathcal{C}^0((0,\dot{\xi}_*)\times (0,1))}$.

\end{thm}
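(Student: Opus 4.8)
The plan is to construct the solution of the free boundary problem {\bf {$\llbracket\textit{FBPL}\rrbracket$}} by a nonlinear iteration scheme built on top of the initial approximation $(\bar{U}_- + \dot{U}_-,\ \bar{U}_+ + \dot{U}_+,\ \dot{\xi}_*;\ \dot{\psi}')$ furnished by Theorem \ref{thm:initial_approx_existence}. Write the unknowns as perturbations of this initial approximation, i.e.\ $U_- = \bar{U}_- + \dot{U}_- + \dot{\dot{U}}_-$, $U_+ \circ \mathcal{T}^{-1} = \bar{U}_+ + \dot{U}_+ + \dot{\dot{U}}_+$, and $\psi' = \dot{\psi}' + \dot{\dot{\psi}}'$, and set up a map $\mathcal{N}$ on a closed ball in the product space
\[
\mathcal{X} \defs \set{ (\dot{\dot{U}}_-,\ \dot{\dot{U}}_+,\ \dot{\dot{\psi}}') : \|\dot{\dot{U}}_-\|_{\mcc^{1,\alpha}(\Omega)} + \|\dot{\dot{U}}_+\|_{(\dot{\Omega}_+;\dot{\Gamma}_s)} + \|\dot{\dot{\psi}}'\|_{W_\beta^{1-\frac1\beta}(\dot{\Gamma}_s)} \leq \tfrac12 \sigma g^{3/2} },
\]
whose fixed point is the desired solution. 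The strategy for $\mathcal{N}$ is the standard decoupling used for transonic shock problems in a flat nozzle: given an approximate tuple, first solve the supersonic (hyperbolic) problem in $\Omega_-$ for $U_-$ using the entrance and wall data \eqref{eq44}--\eqref{eq45}; this is a forward characteristic problem for the quasilinear hyperbolic system \eqref{eq260000}--\eqref{b1} with the compatibility conditions \eqref{q2}, giving $U_-$ with the estimate $\|U_- - \bar U_-\|_{\mcc^{2,\alpha}(\Omega_-)}\le C_-\sigma$. Then restrict $U_-$ to the (current) shock curve, feed it through the R--H conditions \eqref{eq30}--\eqref{eq33} to obtain boundary data for $U_+$ on $\dot\Gamma_s$ together with an update for $\psi'$, and solve the subsonic problem in the fixed domain $\dot\Omega_+$ for $U_+$ via the elliptic-hyperbolic theory of Section 3; finally recover $\psi$ and hence $\mathcal T$.

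The key analytic steps, in order, are: (i) \emph{Reformulation and linearization of the fixed-point equation.} Subtract the equations satisfied by the initial approximation from the full system \eqref{eq260000}--\eqref{b1}, writing the residual as $L \dot{\dot U}_+ = F(\dot{\dot U}_+, U_-,\psi')$ where $L$ is the fixed linear elliptic-hyperbolic operator (at the background subsonic state, with the $g$-dependent $0$-order coupling) studied in Section 3, and $F$ collects quadratic-in-perturbation terms plus the small error from using $\dot U_+$ as approximation — by \eqref{iniPE} this error is $O(1)g^2\sigma + O(1)\sigma^2$, which under \eqref{gsigma} is $o(\sigma g^{3/2})$. Similarly the R--H conditions linearized about $\bar U_\pm$ produce \eqref{eq69}--\eqref{eq74} plus quadratic remainders. (ii) \emph{Solvability and a priori estimate for each iterate.} Invoke the existence and a priori estimate from Section 3 for the elliptic-hyperbolic boundary value problem for $\dot{\dot U}_+$ with the Section-3 solvability condition built in; this solvability condition is what pins down the correction $\xi_* - \dot\xi_*$ to the shock position, and here is where the relation $|\psi(1) - \dot\xi_*| \le C_s \sigma/g$ comes from — note the $1/g$ loss reflects that the gravity-generated $g\sigma$-term is the leading mechanism, so a displacement of order $\sigma$ in the solvability functional corresponds to a displacement of order $\sigma/g$ in the position. (iii) \emph{Contraction.} Estimate $\mathcal N(\mathcal V_1) - \mathcal N(\mathcal V_2)$ in the $\mathcal X$-norm; since $F$ is quadratic and the ball has radius $\sim \sigma g^{3/2}$, differences are controlled by $C(\sigma g^{3/2} + g)\|\mathcal V_1 - \mathcal V_2\|_{\mathcal X} + (\text{coupling losses})$, and for $g < g_0$ small this is a strict contraction, yielding the unique fixed point. (iv) \emph{Regularity bootstrap and recovery of the original variables.} Improve the regularity of $U_-$ to $\mcc^{2,\alpha}$, transfer estimates back through $\mathcal T^{-1}$ to the physical domain $\Omega_+$, and derive the stated bounds; the final six inequalities then follow by combining $\| \text{(full)} - \text{(initial approx)} \| \le \tfrac12 \sigma g^{3/2}$ with the Theorem \ref{thm:initial_approx_existence} bounds on the initial approximation and the triangle inequality.

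The main obstacle I expect is step (ii): closing the a priori estimate for $\dot{\dot U}_+$ in the subsonic region while simultaneously enforcing the solvability condition and tracking its exact dependence on $g$ and $\sigma$. The difficulty is threefold — the elliptic and hyperbolic parts of the linearized system are coupled through the $O(g)$ zeroth-order terms (so one cannot estimate $p_+$ and $\theta_+$ independently of $q_+$), the coefficients are non-constant because the background depends on $\eta$ through $\bar p_\pm(\eta)$, and the corner singularities of $\dot\Omega_+$ force the use of the weighted Sobolev spaces $W_\beta^s$ with $\beta>2$, in which one must verify that the nonlinear map does not lose the regularity needed to re-close. The auxiliary-system device alluded to in the introduction (and developed in Sections 3--4) is what decouples the solvability analysis from the full coupling; the delicate point is to show that the $g\sigma$-leading-order behaviour of the solvability functional established for the \emph{linear} problem in Theorem \ref{thm:initial_approx_existence} persists, with controlled error, when the quadratic nonlinear terms are reinstated — this is precisely why the sharp smallness hypothesis $\sigma \le g^3$ (equivalently $\sigma \le g^{2+\varepsilon}$) is needed, so that all nonlinear corrections are genuinely lower order than the $g\sigma$ term that carries the physical information about the shock location. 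The contraction step (iii) is then comparatively routine once (ii) is in place, since the nonlinearity is quadratic and the iteration ball has been chosen with the right radius $\tfrac12\sigma g^{3/2}$ — large enough to absorb the $O(g^2\sigma)$ modeling error, small enough that quadratic terms beat the linear part.
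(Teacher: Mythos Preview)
Your proposal is essentially correct and follows the same architecture as the paper's proof (Sections~5--6): solve the supersonic problem by hyperbolic theory, iterate on the subsonic side using the elliptic-hyperbolic machinery of Section~3, pin down the shock-position correction $\delta\xi_*$ via the solvability condition (yielding the $\sigma/g$ bound), and close by contraction in a ball of radius $\tfrac12\sigma g^{3/2}$.

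One organizational simplification the paper makes that you do not: the paper solves the full nonlinear supersonic problem \emph{once and for all} on the entire rectangle $\Omega$ (Lemma~\ref{zc}), obtaining directly $\|\delta U_- - \dot U_-\|_{\mcc^{1,\alpha}(\bar\Omega)}\le C_L^\natural\sigma^2$, and thereafter iterates only on the subsonic data $(\delta U,\delta\psi')$ with $\delta U := U_+ - \bar U_+$ centered at the background state. Since the supersonic initial-boundary data \eqref{eq44}--\eqref{eq45} do not depend on the shock, including $\dot{\dot U}_-$ in your iteration space $\mathcal X$ is superfluous --- it is fixed from the start --- and solving on all of $\Omega$ rather than on the variable domain $\Omega_-$ avoids any dependence of $U_-$ on the iterate. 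This streamlines the scheme and cleanly separates the $\sigma^2$ supersonic error from the subsonic contraction. Your contraction constant ``$C(\sigma g^{3/2}+g)$'' is also looser than what the paper actually obtains: the sharp constant is $C\sigma/g \le C g^2$ (Lemma~\ref{thm63}), coming from the $g^{-1}$ loss in $|\delta\xi_{*2}-\delta\xi_{*1}|$ multiplied by the $O(\sigma)$ size of $\partial_\xi\dot U_-$; this is where the hypothesis $\sigma\le g^3$ is genuinely consumed.
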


 \begin{rem}
   It should be noted that, under the conditions \eqref{PCI}-\eqref{PE}, when the length of the nozzle $L \leq \xi_1$, $\theta^{\aleph}$ has the sign-preserving property such that the  function $\mathcal{R}_{g\sigma}^\natural$ is strictly decreasing. Then \eqref{inipo} has a unique solution. That is, the approximate position of the shock front in the nozzle can be determined uniquely. Theorem \ref{ppo} shows that there exists a transonic shock solution with the position of the shock front close to it. However, under the given boundary conditions, it is still an open problem whether the obtained shock solution is unique or not.

   Moreover, when $L > \xi_1$, the solution $\theta^{\aleph}$ no longer enjoys the sign-preserving property such that the function $\mathcal{R}_{g\sigma}^\natural$ will not be monotone. Therefore, there may exists more than one solutions $\bar{\xi}_*$ to the equation \eqref{inipo}. Thus, there may exist more than one initial approximating shock solutions, and each approximation will lead to a transonic shock solution to the free boundary problem {\bf {$\llbracket\textit{FBPL}\rrbracket$}}.
   Hence, there may exist more than one shock solutions to the problem {\bf {$\llbracket\textit{FBPL}\rrbracket$}}.
 \end{rem}

 %\begin{rem}
%   Pay attention to the state equation, for the isothermal gases, there is a background solution for the Euler system $\eqref{eq1}$, but for the polytropic gas, there not exist a background solution only depend on the variable $y$, it is some difficult to find the background solution and furthermore we cannot obtain a good estimate for the hyperbolic system in the supersonic part, and eventually the estimate in the subsonic part also failed.
%
% \end{rem}
%

\section{A preliminary: boundary value problems of a typical elliptic-hyperbolic composite system}
To establish the existence of the shock solution and prove Theorem \ref{ppo}, one of the key steps is to solve the boundary value problem of the linearized Euler system for the subsonic flow behind the shock front, which is elliptic-hyperbolic composite. As a preliminary, in this section, we are going to establish theorems on the existence of the solutions to such problems, which will be employed later in proving Theorem \ref{thm:initial_approx_existence} and Theorem \ref{ppo}.

We remark that the notations used in this section are independent and have no relations to the ones in other parts of the paper.

Let $\xi_0$ and $L$ be two positive constants, and
\begin{equation}
  {\Omega} = \{ (\xi, \eta)\in \mathbb{R}^2 : \xi_0 < \xi < L, \,\,\,0 < \eta< 1\},
\end{equation}
be a rectangle with the boundaries
\begin{align*}
&\Gamma_s = \{ (\xi,\eta)\in \mathbb{R}^2 : \xi =\xi_0,\,\,\, 0 <\eta <1 \},\\
&\Gamma_2 = \{ (\xi,\eta)\in \mathbb{R}^2 :  \xi_0 <{\xi} < L, \,\,\,\eta =0 \},\\
&\Gamma_3 = \{ (\xi,\eta)\in \mathbb{R}^2 : \xi = L,\,\,\, 0< \eta <1 \},\\
&\Gamma_4 = \{ (\xi,\eta)\in \mathbb{R}^2 :  \xi_0 < {\xi} < L , \,\,\,\eta =1 \}.
\end{align*}
Consider the following boundary value problem for the unknowns $(\mathcal{U},\mathcal{V}, \mathcal{W} )^\top$:
\begin{align}
&\partial_\eta \mathcal{U} + \mathcal{A}_1\partial_\xi  \mathcal{V} - \mathcal{A}_2 \mathcal{W} = \mathcal{F}_1,&\quad &\text{in}\quad \Omega\label{x}\\
&\partial_\eta \mathcal{V} - \mathcal{A}_3(\eta)\partial_\xi \mathcal{U} - \mathcal{A}_4(\eta)\mathcal{V}  = \mathcal{F}_2, &\quad &\text{in}\quad \Omega \label{xx}\\
&\partial_\xi \mathcal{W} + \mathcal{A}_5(\eta)\partial_\xi \mathcal{U} + \mathcal{A}_6\mathcal{V}  = \partial_\xi \mathcal{F}_3 + \mathcal{F}_4,&\quad &\text{in}\quad \Omega\label{x26} \\
&\mathcal{U} = \mathcal{U}_s,\mathcal{W} = \mathcal{W}_s, &\quad &\text{on} \quad {\Gamma}_s\\
&\mathcal{V} = 0, &\quad &\text{on}\quad\Gamma_{2} \cup\Gamma_{4}\\
&\mathcal{U}= \mathcal{U}_3, &\quad &\text{on}\quad\Gamma_3\label{x1}
\end{align}
where $\mathcal{A}_1$, $\mathcal{A}_2$ and $\mathcal{A}_6$ are constants, $\mathcal{A}_i> 0 (i=1, \cdots ,6)$, and $\inf \limits_ {\eta\in (0,1)} A_3(\eta) >0$.

Since $ \mathcal{A}_1 $ and $ \mathcal{A}_3 $ are positive, the equations \eqref{x} and \eqref{xx} form an elliptic system of first order for $(\mathcal{U},\mathcal{V})^\top$ in its principle part.
Moreover, it is obvious that the equation \eqref{x26} is a transport equation.
Hence, the equations \eqref{x}-\eqref{x26} form an elliptic-hyperbolic composite system with the unknowns being coupled in the $ 0 $-order terms.
The appearance of the coupled terms brings difficulties in deducing the solvability condition on the non-homogeneous terms and the boundary data, which is observed to be required for the elliptic sub-problem of the boundary value problem of the linearized Euler system without the gravity for subsonic flows.
To deal with the difficulties, an auxiliary problem for a modified elliptic system of first order for the equations \eqref{x}-\eqref{xx} will be introduced.
It turns out that the solvability condition can be easily reduced with the help of the introduced problem.
Then the existence of the solution and its a priori estimates can be established by employing classical theory for the elliptic equations and the hyperbolic equations.

In this section, we are going to prove the following theorem for the boundary value problem  $\eqref{x}$-$\eqref{x1}$.

\begin{thm}\label{cv0}
 Let $\beta>2$. Suppose $\mathcal{F}_i\in L^\beta({\Omega})$,$(i =1,2)$, $\mathcal{F}_j\in \mcc^0({\Omega})$,$(j = 3,4)$ and $\mathcal{U}_{s},\mathcal{W}_{s} \in W_\beta^{1 - \frac{1}{\beta}}(\Gamma_s)$, $\mathcal{U}_{3}\in W_\beta^{1 - \frac{1}{\beta}}(\Gamma_3)$, then for the boundary value problem $\eqref{x}$-$\eqref{x1}$, there exists a unique solution $(\mathcal{U},\mathcal{V},\mathcal{W})^\top$ if and only if
\begin{align}\label{x30}
  \int\int_{\Omega}  \mathcal{A}_+(\eta) \mathcal{F}_2(\xi,\eta)\dif\xi \dif \eta = \int_0^1 \mathcal{A}_3(\eta)\mathcal{A}_+(\eta) (\mathcal{U}_s - \mathcal{U}_3)(\eta)\dif\eta,
\end{align}
where
$$\mathcal{A}_+(\eta) \defs \exp \Big(-\int_0^\eta \mathcal{A}_4(\tau)\dif \tau\Big).$$
Moreover, $(\mathcal{U},\mathcal{V}, \mathcal{W} )^\top$ satisfies the following estimate:
  \begin{equation}\label{x20}
  \begin{split}
   & \| \mathcal{U} \|_{W_\beta^1(\Omega)} +  \| \mathcal{V}\|_{W_\beta^1(\Omega)}+  \| \mathcal{W}\|_{\mcc^0(\Omega)} +  \| \mathcal{W}\|_{W_\beta^{1-\frac{1}{\beta}}(\Gamma_s)}\\
     \leq &C\Big( \sum_{i=1}^2\| \mathcal{F}_i \|_{L^\beta(\Omega)} + \| \mathcal{F}_3 - \mathcal{F}_3(\xi_0,\eta) \|_{\mcc^0(\Omega)}+ \| \mathcal{F}_4 \|_{\mcc^0(\Omega)}\Big) \\
     &+C\Big( \|\mathcal{U}_s\|_{W_\beta^{1-\frac{1}{\beta}}(\Gamma_s)}+ \|\mathcal{W}_s \|_{W_\beta^{1-\frac{1}{\beta}}(\Gamma_s)}+ \|\mathcal{U}_3\|_{W_\beta^{1-\frac{1}{\beta}}(\Gamma_3)} \Big),
     \end{split}
     \end{equation}
    where the constant $C$ depends on $\xi_0$, $L$, $\beta$ and the coefficients $\mathcal{A}_i$, $(i=1, \cdots ,6)$.
\end{thm}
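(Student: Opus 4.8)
The plan is to decouple the elliptic sub-system \eqref{x}--\eqref{xx} from the transport equation \eqref{x26}, handle each with classical theory, and close the loop by a fixed-point argument. The key structural observation is that $\mathcal{W}$ enters \eqref{x}--\eqref{xx} only through the $0$-order term $-\mathcal{A}_2\mathcal{W}$ in \eqref{x}, while $\mathcal{W}$ itself is recovered from $(\mathcal{U},\mathcal{V})$ by integrating \eqref{x26} along the $\xi$-direction starting from $\Gamma_s$. So the natural scheme is: given a guess for $\mathcal{V}$ (and $\partial_\xi\mathcal{U}$), solve \eqref{x26} as an ODE in $\xi$ with data $\mathcal{W}=\mathcal{W}_s$ on $\Gamma_s$ to get $\mathcal{W}$; feed this $\mathcal{W}$ into \eqref{x}--\eqref{xx} and solve the resulting first-order elliptic system with the mixed boundary conditions ($\mathcal{U}=\mathcal{U}_s$ on $\Gamma_s$, $\mathcal{U}=\mathcal{U}_3$ on $\Gamma_3$, $\mathcal{V}=0$ on $\Gamma_2\cup\Gamma_4$) to get a new $(\mathcal{U},\mathcal{V})$; iterate. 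Because every $0$-order coupling carries a coefficient that can be absorbed on a short nozzle (or treated as a contraction in the relevant norms), this iteration converges.

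\textbf{The elliptic sub-problem and the solvability condition.}
First I would reduce \eqref{x}--\eqref{xx} (with $\mathcal{W}$ regarded as known) to a second-order elliptic equation. Differentiating and combining in the standard way gives a divergence-form uniformly elliptic equation for $\mathcal{U}$ (say) with the Dirichlet-type data on $\Gamma_s\cup\Gamma_3$ and an oblique/Neumann-type condition on $\Gamma_2\cup\Gamma_4$ coming from $\mathcal{V}=0$; the corner singularities at the four vertices are exactly why the weighted Sobolev spaces $W_\beta^s$ of \cite{PG} are needed, and the condition $\beta>2$ guarantees $W_\beta^1\hookrightarrow\mcc^0$ so that traces and the transport step make sense. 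The crucial point is the \emph{compatibility/solvability condition} \eqref{x30}: it arises because the problem is, in effect, a Neumann-type problem in the $\eta$-variable. Concretely, multiply \eqref{xx} by the integrating factor $\mathcal{A}_+(\eta)=\exp(-\int_0^\eta\mathcal{A}_4)$, so that $\partial_\eta(\mathcal{A}_+\mathcal{V}) = \mathcal{A}_+\mathcal{A}_3\partial_\xi\mathcal{U} + \mathcal{A}_+\mathcal{F}_2$; integrate over $\Omega$ and use $\mathcal{V}=0$ on $\Gamma_2\cup\Gamma_4$ to kill the left-hand side, then integrate the $\partial_\xi\mathcal{U}$ term in $\xi$ and insert the boundary data $\mathcal{U}=\mathcal{U}_s$ on $\Gamma_s$, $\mathcal{U}=\mathcal{U}_3$ on $\Gamma_3$. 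This yields exactly \eqref{x30} as a \emph{necessary} condition; the sufficiency is the statement that, once \eqref{x30} holds, the second-order elliptic problem with its mixed data is solvable, which is the Fredholm alternative for the associated operator whose one-dimensional cokernel is spanned precisely by the functional on the two sides of \eqref{x30}. The a priori estimate \eqref{x20} then follows from the standard weighted elliptic estimate of \cite{PG} in $W_\beta^1$ (for $\mathcal{U},\mathcal{V}$) together with the transport estimate for $\mathcal{W}$.

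\textbf{Recovering $\mathcal{W}$ and closing the iteration.}
With $(\mathcal{U},\mathcal{V})\in W_\beta^1$ in hand, equation \eqref{x26} is a linear transport equation in $\xi$ with forcing $\partial_\xi\mathcal{F}_3+\mathcal{F}_4-\mathcal{A}_5\partial_\xi\mathcal{U}-\mathcal{A}_6\mathcal{V}$; integrating from $\Gamma_s$ gives
\[
\mathcal{W}(\xi,\eta) = \mathcal{W}_s(\eta) + \mathcal{F}_3(\xi,\eta) - \mathcal{F}_3(\xi_0,\eta) - \int_{\xi_0}^{\xi}\big(\mathcal{A}_5(\eta)\partial_\xi\mathcal{U} + \mathcal{A}_6\mathcal{V} - \mathcal{F}_4\big)(\tau,\eta)\,\dif\tau,
\]
which is why only the combination $\mathcal{F}_3-\mathcal{F}_3(\xi_0,\eta)$ appears in \eqref{x20}; the $\mcc^0$ and $W_\beta^{1-1/\beta}(\Gamma_s)$ bounds on $\mathcal{W}$ follow from the $W_\beta^1$ bound on $\mathcal{U}$ (using $\beta>2$ for the embedding into $\mcc^0$ and the trace theorem on $\Gamma_s$). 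Finally, substituting this $\mathcal{W}$ back into \eqref{x} makes the elliptic problem depend on the unknown only through lower-order terms of the form $\mathcal{A}_2\mathcal{A}_5\int_{\xi_0}^\xi\partial_\xi\mathcal{U}$ and $\mathcal{A}_2\mathcal{A}_6\int_{\xi_0}^\xi\mathcal{V}$; a Banach fixed-point argument in the space $W_\beta^1(\Omega)\times W_\beta^1(\Omega)$ closes the construction, with contractivity guaranteed by the smallness of $L-\xi_0$ or, alternatively, by a Volterra-type (causality in $\xi$) structure of the correction operator. I expect the \textbf{main obstacle} to be the sufficiency half of \eqref{x30}: one must set up the second-order elliptic problem in the weighted spaces $W_\beta^s$ so that the mixed Dirichlet/oblique boundary conditions at the corners are admissible, verify that the cokernel is exactly one-dimensional and identified with \eqref{x30}, and check that the introduction of the auxiliary modified elliptic system (mentioned before the theorem) genuinely removes the apparent obstruction coming from the $\mathcal{W}$-coupling rather than merely relocating it; the transport step and the fixed-point closure are comparatively routine.
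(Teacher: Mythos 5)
Your overall architecture --- an elliptic sub-problem for $(\mathcal{U},\mathcal{V})$, integration of the transport equation \eqref{x26} for $\mathcal{W}$, and a solvability condition obtained by integrating the weighted divergence form of \eqref{xx} over $\Omega$ --- matches the paper's, and your derivation of the \emph{necessity} of \eqref{x30} is exactly right. But there are two genuine gaps. First, your closure of the $\mathcal{W}$--$(\mathcal{U},\mathcal{V})$ coupling by a Banach fixed point is not available under the hypotheses of the theorem: contractivity would require smallness of $L-\xi_0$, which is nowhere assumed, and the alternative ``Volterra-type causality in $\xi$'' fails because the elliptic solve for $(\mathcal{U},\mathcal{V})$ propagates information in both $\xi$-directions, so the correction operator is not of Volterra type. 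The paper avoids iteration entirely: after splitting off an auxiliary problem it substitutes the integrated transport equation for $\mathcal{W}$ directly into \eqref{x}, introduces a potential $\Phi$ with $\nabla\Phi=(\mathcal{A}_+\mathcal{V}^{(2)},\mathcal{A}_+\mathcal{A}_3\mathcal{U}^{(2)})$, and arrives at a single second-order elliptic equation \eqref{x16} with a nonlocal zero-order term $\Phi-\Phi(\xi_0,\eta)$ and homogeneous Dirichlet data, solved in one shot by linear elliptic theory --- no smallness, no contraction.

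Second, the \emph{sufficiency} of \eqref{x30}, which you correctly flag as the main obstacle, is left as an unverified Fredholm claim (``one-dimensional cokernel spanned precisely by the functional''). The paper's two-step decomposition resolves this concretely: the auxiliary problem \eqref{x2}--\eqref{x3} carries all of $\mathcal{F}_2$, $\mathcal{U}_s$ and $\mathcal{U}_3$ but none of the $\mathcal{W}$-coupling, and after introducing the potential $\Psi$ with $\nabla\Psi=(-\mathcal{U}^{(1)},\mathcal{A}_1\mathcal{V}^{(1)})$ it becomes a pure Neumann problem whose compatibility condition (Corollary 4.4.3.8 of \cite{PG}) is exactly \eqref{x30}; the remainder problem has homogeneous boundary data and becomes the unconditionally solvable Dirichlet problem for $\Phi$, so no separate cokernel computation is needed and the $\mathcal{W}$-coupling is verifiably not ``relocating'' the obstruction. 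Note also that your reduction to a mixed Dirichlet/oblique second-order problem for $\mathcal{U}$ itself would reintroduce corner-compatibility difficulties that the potential-function formulations (pure Neumann for $\Psi$, pure Dirichlet for $\Phi$) are designed to sidestep.
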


\begin{proof}
We divide our proof into three steps.

\textbf{Step 1}: In this step, the following auxiliary problem will be solved
\begin{align}
  &\partial_\eta \mathcal{U}^{(1)} + \mathcal{A}_1\partial_\xi  \mathcal{V}^{(1)} =0,&\quad &\text{in}\quad \Omega\label{x2}\\
&\partial_\eta \Big(\mathcal{A}_+(\eta)\mathcal{V}^{(1)}\Big) - \partial_\xi \Big(\mathcal{A}_+(\eta) \mathcal{A}_3(\eta)\mathcal{U}^{(1)}\Big)  = \mathcal{A}_+ (\eta) \mathcal{F}_2, &\quad &\text{in}\quad \Omega\\
&\mathcal{U}^{(1)} = \mathcal{U}_s, &\quad &\text{on} \quad {\Gamma}_s\\
&\mathcal{U}^{(1)} = \mathcal{U}_3, &\quad &\text{on}\quad\Gamma_{3} \\
&\mathcal{V}^{(1)} = 0, &\quad &\text{on}\quad\Gamma_{2} \cup\Gamma_{4}\label{x3}
\end{align}
where
$$\mathcal{A}_+(\eta) = \exp \big(-\int_0^\eta \mathcal{A}_4(\tau)\dif \tau\big).$$
 The equation \eqref{x2} implies that there exists a potential function $\Psi$ such that
  \begin{equation}\label{x10}
    \nabla \Psi = (\partial_\xi \Psi, \partial_\eta \Psi) = (-\mathcal{U}^{(1)}, \mathcal{A}_1 \mathcal{V}^{(1)}),
  \end{equation}
then the problem $\eqref{x2}$-$\eqref{x3}$ can be formulated as
 \begin{align}
   &\partial_\eta \Big(\frac{\mathcal{A}_+(\eta)}{\mathcal{A}_1}\partial_\eta \Psi\Big) +\partial_\xi \Big(\mathcal{A}_+(\eta) \mathcal{A}_3(\eta)\partial_\xi \Psi\Big)  = \mathcal{A}_+ (\eta) \mathcal{F}_2, &\quad &\text{in}\quad \Omega\label{x6}\\
   &-\partial_\xi \Psi = \mathcal{U}_s, &\quad &\text{on} \quad {\Gamma}_s\\
     &-\partial_\xi \Psi = \mathcal{U}_3, &\quad &\text{on} \quad {\Gamma}_3\\
&\quad \, \partial_\eta \Psi = 0. &\quad &\text{on}\quad\Gamma_{2} \cup\Gamma_{4}\label{x7}
 \end{align}
By applying Corollary 4.4.3.8 in the book \cite{PG}, there exists a unique solution ${\Psi}\in W_\beta^2(\Omega)$, up to an additive constant, to the problem \eqref{x6}-\eqref{x7}, if and only if
\begin{align}\label{sol}
  \int\int_{\Omega}  \mathcal{A}_+(\eta) \mathcal{F}_2(\xi,\eta)\dif\xi \dif \eta = \int_0^1 \mathcal{A}_3(\eta)\mathcal{A}_+(\eta) (\mathcal{U}_s - \mathcal{U}_3)(\eta)\dif\eta,
\end{align}
which is exactly the solvability condition \eqref{x30}. Moreover, without loss of generality, we may assume $\int\int_{\Omega} \Psi(\xi,\eta)\dif\xi \dif \eta =0$. Then, by applying Poincar\'{e} inequality, there exists a
constant $C_{(\Omega)}$ such that
 \begin{equation}\label{t1}
   \|\Psi\|_{L^2(\Omega)} \leq C_{(\Omega)} \|\nabla \Psi \|_{L^2(\Omega)}.
 \end{equation}
Then, multiplying $\Psi$ on both sides of the
equation \eqref{x6}, integrating over $\Omega$ and then employing the formula of integration by
parts, one has
\begin{equation}\label{x8}
\begin{split}
  \|\nabla \Psi\|_{L^2(\Omega)}^2 \leq& C \| \mathcal{F}_2 \|_{L^2(\Omega)}  \| \Psi\|_{L^2(\Omega)} \\
  &+ C\Big( \|\mathcal{U}_s\|_{L^\infty(\Gamma_s)}+ \|\mathcal{U}_3\|_{L^\infty(\Gamma_3)}\Big) \| \Psi\|_{L^2(\partial\Omega)}.
\end{split}
\end{equation}
Therefore, by employing Trace theorem and \eqref{t1}, $\mathcal{F}_2\in L^\beta({\Omega})$, $\mathcal{U}_{s}\in W_\beta^{1 - \frac{1}{\beta}}(\Gamma_s)$, $\mathcal{U}_{3}\in W_\beta^{1 - \frac{1}{\beta}}(\Gamma_3)$, it follows that
 \begin{align}\label{x9}
  \| \Psi\|_{H^1(\Omega)}\leq C \mathcal{F}_{\Psi},
\end{align}
where
$$\mathcal{F}_{\Psi}\defs\| \mathcal{F}_2 \|_{L^\beta(\Omega)} + \|\mathcal{U}_s\|_{W_\beta^{1-\frac{1}{\beta}}(\Gamma_s)}+ \|\mathcal{U}_3\|_{W_\beta^{1-\frac{1}{\beta}}(\Gamma_3)} .$$
Employing the embedding theorem, and Theorem 4.3.2.4 as well as Remark 4.3.2.5 in the book \cite{PG}, one has
\begin{align}
   \| \Psi\|_{W_\beta^2(\Omega)}\leq C \mathcal{F}_{\Psi}.
 \end{align}
By the definition of $\Psi$ in \eqref{x10}, it is easy to see that
  \begin{align}\label{x19}
   \|\mathcal{U}^{(1)}\|_{W_\beta^1(\Omega)} + \| \mathcal{V}^{(1)}\|_{W_\beta^1(\Omega)} \leq C \mathcal{F}_{\Psi}.
 \end{align}

\textbf{Step 2}: Let $ (\mathcal{U}^{(2)}, \mathcal{V}^{(2)})^\top =(\mathcal{U}, \mathcal{V})^\top -  (\mathcal{U}^{(1)}, \mathcal{V}^{(1)})^\top $, by employing \eqref{x}-\eqref{x1},
then $(\mathcal{U}^{(2)}, \mathcal{V}^{(2)})^\top$ satisfies the following problem in $\Omega$:
 \begin{align}
   &\partial_\eta \mathcal{U}^{(2)} + \mathcal{A}_1\partial_\xi  \mathcal{V}^{(2)} - \mathcal{A}_2 \mathcal{W} = \mathcal{F}_1,\label{x4}\\
& \partial_\eta \Big(\mathcal{A}_+(\eta)\mathcal{V}^{(2)}\Big) - \partial_\xi \Big(\mathcal{A}_+(\eta) \mathcal{A}_3(\eta)\mathcal{U}^{(2)}\Big) = 0, \label{x14}\\
&\partial_\xi \mathcal{W} + \mathcal{A}_5(\eta)\partial_\xi \mathcal{U}^{(2)} + \mathcal{A}_6\mathcal{V}^{(2)}
  =\partial_\xi \mathcal{F}_3 + \mathcal{F}_4 -\mathcal{A}_5(\eta)\partial_\xi \mathcal{U}^{(1)} - \mathcal{A}_6\mathcal{V}^{(1)}, \label{x11}
 \end{align}
with the boundary conditions
\begin{align}
& \mathcal{W} = \mathcal{W}_s, &\quad &\text{on} \quad {\Gamma}_s\label{mathcalW}\\
  &\mathcal{U}^{(2)} =0, &\quad &\text{on} \quad {\Gamma}_s\cup\Gamma_{3}  \label{x17}\\
&\mathcal{V}^{(2)} = 0. &\quad &\text{on}\quad\Gamma_{2} \cup\Gamma_{4}\label{x5}
\end{align}
Applying \eqref{x11} and \eqref{mathcalW}, one can deduce that
 \begin{equation}\label{x12}
 \begin{split}
   \mathcal{W} =&\int_{\xi_0}^{\xi}\Big(\partial_\tau \mathcal{F}_3 + \mathcal{F}_4 -\mathcal{A}_5(\eta)\partial_\tau \mathcal{U}^{(1)} - \mathcal{A}_6\mathcal{V}^{(1)}\Big)(\tau,\eta)\dif\tau\\
   &-  \int_{\xi_0}^{\xi}\Big(\mathcal{A}_5(\eta)\partial_\tau \mathcal{U}^{(2)}+ \mathcal{A}_6\mathcal{V}^{(2)}\Big)(\tau,\eta)\dif\tau + \mathcal{W}_s.
 \end{split}
 \end{equation}
Substituting $\eqref{x12}$ into \eqref{x4}, one has
\begin{equation}\label{x13}
\begin{split}
    &\partial_\eta \mathcal{U}^{(2)} + \mathcal{A}_1 \partial_\xi \mathcal{V}^{(2)} + \mathcal{A}_2 \mathcal{A}_6\int_{\xi_0}^{\xi} \mathcal{V}^{(2)}(\tau,\eta) \dif \tau + \mathcal{A}_2 \mathcal{A}_5(\eta)\mathcal{U}^{(2)}\\
   =& \mathcal{F}_1 + \mathcal{A}_2\Big( \mathcal{F}_3- \mathcal{F}_3(\xi_0,\eta)\Big)
+\mathcal{A}_2\int_{\xi_0}^{\xi}\Big(\mathcal{F}_4 -\mathcal{A}_5(\eta)\partial_\tau \mathcal{U}^{(1)} - \mathcal{A}_6\mathcal{V}^{(1)}\Big)(\tau,\eta)\dif \tau\\
 &+ \mathcal{A}_2 \mathcal{W}_s\\
   \defs & \tilde{\mathcal{F}}_1.
  \end{split}
  \end{equation}
Let
$$\mathcal{B}_+(\eta) : = \exp\Big(\int_0^\eta \mathcal{A}_2 \mathcal{A}_5 (\tau)\dif \tau\Big ),$$
 then the equation \eqref{x13} can be rewritten as
\begin{equation}\label{x15}
\begin{split}
  &\partial_\eta \Big(\mathcal{B}_+(\eta) \mathcal{U}^{(2)}\Big) + \partial_\xi \Big(\mathcal{A}_1 \mathcal{B}_+(\eta) \mathcal{V}^{(2)}\Big)  + \mathcal{A}_2 \mathcal{A}_6 \mathcal{B}_+ (\eta) \int_{\xi_0}^{\xi}  \mathcal{V}^{(2)}(\tau,\eta) \dif\tau\\
   =& \mathcal{B}_+ (\eta)\tilde{\mathcal{F}}_1.
\end{split}
\end{equation}
 By applying \eqref{x14}, there exists a potential function $\Phi$ such that
\begin{equation}
  \nabla {\Phi} = (\partial_\xi {\Phi}, \partial_\eta {\Phi}) = (\mathcal{A}_+ (\eta)\mathcal{V}^{(2)}, \mathcal{A}_+(\eta) \mathcal{A}_3(\eta)\mathcal{U}^{(2)}).
\end{equation}
Then \eqref{x15} becomes
\begin{equation}\label{x16}
\begin{split}
  &\partial_\eta \Big(\frac{\mathcal{B}_+(\eta)}{\mathcal{A}_+(\eta) \mathcal{A}_3(\eta)}\partial_\eta \Phi\Big) + \partial_\xi \Big(\mathcal{A}_1\frac{ \mathcal{B}_+(\eta)}{\mathcal{A}_+(\eta)} \partial_\xi \Phi\Big)  + \mathcal{A}_2 \mathcal{A}_6 \frac{ \mathcal{B}_+(\eta)}{\mathcal{A}_+(\eta)}\Big(\Phi - \Phi(\xi_0,\eta)\Big)\\
   = &\mathcal{B}_+(\eta) \tilde{\mathcal{F}}_1.
\end{split}
\end{equation}
In addition, the boundary conditions \eqref{x17}-\eqref{x5} are changed into
\begin{align}
    \partial_\eta \Phi =& 0, &\quad  &\text{on} \quad \Gamma_s\cup\Gamma_3 \\
  \partial_\xi \Phi =& 0. &\quad  &\text{on} \quad \Gamma_2 \cup  \Gamma_4
\end{align}
Without loss of generality, one may assume that $\Phi({\xi}_0,0)=0$, then one has
\begin{align}
  \Phi =0. \quad \text{on}\quad \partial\Omega
\end{align}
By employing standard elliptic theory (cf.\cite{Evans, PG}), it follows that
\begin{align}
   \| \Phi\|_{W_\beta^2(\Omega)}\leq C \| \tilde{\mathcal{F}}_1 \|_{L^\beta(\Omega)}.
 \end{align}
 Then by applying the definition of $\Phi$, it holds that
 \begin{equation}\label{x18}
 \begin{split}
   \|\mathcal{U}^{(2)}\|_{W_\beta^1(\Omega)} + \| \mathcal{V}^{(2)}\|_{W_\beta^1(\Omega)}
   \leq C \| \tilde{\mathcal{F}}_1 \|_{L^\beta(\Omega)}.
 \end{split}
 \end{equation}

\textbf{Step 3}: Finally, it remains to solve $\mathcal{W}$. Recalling the equation \eqref{x26}, one has
\begin{equation}\label{x27}
\begin{split}
  \mathcal{W}(\xi,\eta) =& \mathcal{F}_3(\xi,\eta) - \mathcal{F}_3(\xi_0,\eta) + \mathcal{W}_s(\xi_0,\eta) + \mathcal{A}_5(\eta) (\mathcal{U}(\xi,\eta) - \mathcal{U}_s(\xi_0,\eta))\\
   &+ \int_{\xi_0}^{\xi} (\mathcal{F}_4 - \mathcal{A}_6 \mathcal{V})(\tau,\eta)\dif\tau,
\end{split}
\end{equation}
from which one can deduce that
 \begin{align}\label{3.46}
    \| \mathcal{W}\|_{\mcc^0(\Omega)} +
     \| \mathcal{W}\|_{W_\beta^{1-\frac{1}{\beta}}(\Gamma_s)} \leq& C \Big( \| \mathcal{F}_3 - \mathcal{F}_3(\xi_0,\eta) \|_{\mcc^0(\Omega)}+ \| \mathcal{F}_4 \|_{\mcc^0(\Omega)} + \|\mathcal{U}_s\|_{W_\beta^{1-\frac{1}{\beta}}(\Gamma_s)}\notag\\
     &\qquad + \|\mathcal{W}_s \|_{W_\beta^{1-\frac{1}{\beta}}(\Gamma_s)} +  \| \mathcal{U} \|_{\mcc^0(\Omega)} + \| \mathcal{V} \|_{\mcc^0(\Omega)}  \Big).
  \end{align}

Therefore, by applying \eqref{x19},\eqref{x18}, \eqref{3.46} and the definition of $\tilde{\mathcal{F}}_1$, the estimate \eqref{x20} can be obtained immediately.
\end{proof}

\section{The initial approximation}

In this section, we are going to establish the existence of the solution to the initial linearized free boundary problem {\bf {$\llbracket\textit{IFBPL}\rrbracket$}} with the help of Theorem \ref{cv0}. That is, the Theorem \ref{thm:initial_approx_existence} will be proved.

\subsection{The solution $ \dot{U}_{-} $ in $ \Omega $}
For the linearized equations $\eqref{eq63}$-$\eqref{eq65}$ in the domain ${\Omega}$ with the initial-boundary conditions \eqref{eq82}-\eqref{eq82B},
the existence of the unique solution $\dot{U}_-\in {\mcc}^{2,\alpha}({\Omega})$ can be easily obtained by applying the theory in the book \cite{LY}. In order to clearly analyze the solvability condition of the subsonic solution in the following arguments,
the linear equations \eqref{eq63}-\eqref{eq64} with the initial-boundary conditions \eqref{eq82}-\eqref{eq82B} will be divided into the following two parts.

Let
$(\dot{p}_-^{(1)}, \dot{\theta}_-^{(1)})^\top$ satisfies the following problem $\textbf{(a)}$:
\begin{align}
  &\partial_\eta \dot{p}_-^{(1)} + q_0 \partial_\xi \dot{\theta}_-^{(1)}= 0,\label{eq896}\\
 &\partial_\eta \dot{\theta}_-^{(1)} - \frac{1 - \bar{M}_-^2}{p_0^2 q_0^3} \partial_\xi \dot{p}_-^{(1)}= 0,\label{eq897}
\end{align}
with the initial-boundary conditions
\begin{align}
 \dot{\theta}_-^{(1)} =& 0, \quad \dot{p}_-^{(1)} = \sigma P_I(\eta),&\quad  &\text{on}&\quad  &\Gamma_1\\
 \quad\dot{\theta}_-^{(1)} =& 0. & \quad  &\text{on}&\quad &\Gamma_2\cup \Gamma_4\label{eq899}
\end{align}
Recalling the problem \eqref{eq896*}-\eqref{eq899*}, one can find that $( \dot{p}_-^{(1)}, \dot{\theta}_-^{(1)}) = \sigma( {p}^\aleph, {\theta}^\aleph )$.
Let $(\dot{p}_-^{(2)}, \dot{\theta}_-^{(2)})^\top$ be the solution to the following problem $\textbf{(b)}$:
\begin{align}
  &\partial_\eta \dot{p}_-^{(2)} + q_0\partial_\xi \dot{\theta}_-^{(2)}= -H_1(g,0)\partial_\eta \dot{p}_- + \frac{g}{q_0^2} \dot{q}_- + H_{2_-}(g,\sigma),\label{eq920}\\
 &\partial_\eta \dot{\theta}_-^{(2)} - \frac{1 - \bar{M}_-^2}{p_0^2{q_0^3}} \partial_\xi \dot{p}_-^{(2)}
 =\displaystyle\frac{1-\bar{M}_-^2}{q_0^3}\left(\displaystyle\frac{1}{\bar{\rho}_-^2} - \displaystyle\frac{1}{p_0^2}\right)\partial_\xi\dot{p}_- - H_1(g,0)\partial_\eta \dot{\theta}_- +  \frac{g\dot{\theta}_-}{\bar{\rho}_- q_0^3},\label{eq921}
\end{align}
with the initial-boundary conditions
\begin{align}
 \dot{\theta}_-^{(2)} =& 0,\quad \dot{p}_-^{(2)} = {P}_I^*(\eta) -\sigma P_I(\eta),&\quad &\text{on} &\quad  &\Gamma_1\\
 \quad\dot{\theta}_-^{(2)} =& 0. &\quad  &\text{on} &\quad  &\Gamma_2\cup \Gamma_4\label{p3}
\end{align}
Then it is obvious that
\begin{align*}
(\dot{p}_-,\dot{\theta}_-) = (\dot{p}_-^{(1)}, \dot{\theta}_-^{(1)}) + (\dot{p}_-^{(2)}, \dot{\theta}_-^{(2)}).
 \end{align*}
 This decomposition will only be employed in the analysis for the solvability condition later. Moreover, it turns out that $(\dot{p}_-^{(1)}, \dot{\theta}_-^{(1)})$ contributes the principle part in analyzing the solvability condition, and $(\dot{p}_-^{(2)}, \dot{\theta}_-^{(2)})$ contributes in the higher order term.

 \begin{rem}
   For the problems $\textbf{(a)}$ and $\textbf{(b)}$, the initial data only ensures that the zero order compatibility conditions hold,
  then applying the method of characteristic, one can only obtain global $\mcc^0$ estimate and the piecewise $\mcc^1$ regularity of the solution $(\dot{p}_-^{(i)}, \dot{\theta}_-^{(i)})$, $(i=1,2)$. Fortunately, the global $\mcc^0$ estimate is sufficient
to analyze the solvability condition.
 \end{rem}

Consequently, the following lemma holds:

 \begin{lem}\label{w0}
  Suppose that \eqref{PCI}-\eqref{P1.23} hold, then there exists a unique solution $\dot{U}_-$ to the linearized equations $\eqref{eq63}$-$\eqref{eq65}$ in the domain ${\Omega}$ with the initial-boundary conditions \eqref{eq82}-\eqref{eq82B}, and satisfies the following estimate:
\begin{align}\label{eq84}
   \|\dot{U}_- \|_{\mcc^{2,\alpha}(\Omega)} \leq& C \| {P}_I^*\|_{\mcc^{2,\alpha}(\Gamma_1)} \leq \dot{C}_- \sigma,
\end{align}
where the constant $\dot{C}_-$ depends on $p_0$, $q_0$, ${P}_I$ and $L$.
Moreover, for the problems $\textbf{(a)}$ and $\textbf{(b)}$, there exist solutions $(\dot{p}_-^{(i)}, \dot{\theta}_-^{(i)})^\top$, $(i=1,2)$
satisfying
\begin{align}
 &\|\dot{p}_-^{(1)}\|_{\mcc^0(\Omega)} + \|\dot{\theta}_-^{(1)}\|_{\mcc^0(\Omega)} \leq \dot{C}_-^{\natural(1)}
   \sigma,\label{eq9723(1)}\\
 &\|\dot{p}_-^{(2)}\|_{\mcc^0(\Omega)} + \|\dot{\theta}_-^{(2)}\|_{\mcc^0(\Omega)} \leq \dot{C}_-^{\natural(2)}
   \big(g\sigma + \sigma^2\big),\label{eq9723}
\end{align}
where the constants $\dot{C}_-^{\natural(i)}$, $(i=1,2)$ depend on $p_0$, $q_0$, ${P}_I$ and $L$.
In particular,
\begin{align}
   &C_0 \sigma \leq \dot{\theta}_-^{(1)}(\xi, \eta)\leq  C_1\sigma ,\quad \text{for any }\quad \xi\in (0, \xi_1),\quad \eta\in (0,1),\label{sign}
\end{align}
where $\xi_1 = \displaystyle\frac{\sqrt{\bar{M}_-^2 -1}}{p_0 q_0}$ and the constants $C_0, C_1 >0$ depend on $p_0$, $q_0$ and $P_I$.

 \end{lem}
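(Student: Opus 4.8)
The plan is to solve \eqref{eq63}--\eqref{eq65} with data \eqref{eq82}--\eqref{eq82B} directly as a linear hyperbolic initial--boundary value problem in the rectangle $\Omega$. Since $\bar{M}_->1$ this system is hyperbolic (cf.\ the Remark after \eqref{b2}): the two sonic characteristics through a point of $\Gamma_1$ have opposite slopes and the $\dot q_-$-equation contributes a $\partial_\xi$-characteristic, so prescribing all three components of $\dot U_-$ on $\Gamma_1$ and the single condition $\dot\theta_-=0$ on each of $\Gamma_2,\Gamma_4$ is exactly the well-posed set of data. The corner compatibility conditions at $(0,0)$ and $(0,1)$ hold thanks to \eqref{P_I} and \eqref{q2} (see the Remark following \eqref{2.35}), so the classical theory for linear hyperbolic IBVPs (cf.\ \cite{LY}) gives a unique $\dot U_-\in\mcc^{2,\alpha}(\Omega)$ with $\|\dot U_-\|_{\mcc^{2,\alpha}(\Omega)}\le C\big(\|P_I^*\|_{\mcc^{2,\alpha}(\Gamma_1)}+|H_{2_-}(g,\sigma)|\big)$. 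From \eqref{q4}, $H_1(g,0)=O(g)$ and $H_1(g,\sigma)-H_1(g,0)=O(\sigma)$, hence $H_{2_-}(g,\sigma)=O(g\sigma)$; feeding this into \eqref{qc2}--\eqref{PIsharp} and using $\|P_I\|_{\mcc^{2,\alpha}(\bar{\mathbb{R}}_+)}\le C_I$ yields $\|P_I^*\|_{\mcc^{2,\alpha}(\Gamma_1)}\le \dot C_-\sigma$, which is \eqref{eq84}.

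Next I would carry out the splitting $(\dot p_-,\dot\theta_-)=(\dot p_-^{(1)},\dot\theta_-^{(1)})+(\dot p_-^{(2)},\dot\theta_-^{(2)})$. Problem \textbf{(a)} in \eqref{eq896}--\eqref{eq899} is, up to the factor $\sigma$, identical to \eqref{eq896*}--\eqref{eq899*}, so by uniqueness $(\dot p_-^{(1)},\dot\theta_-^{(1)})=\sigma(p^\aleph,\theta^\aleph)$; solving the constant-coefficient $2\times2$ system by the method of characteristics (diagonalising via the Riemann invariants $\dot p_-^{(1)}\pm\frac{p_0q_0^2}{\sqrt{\bar{M}_-^2-1}}\,\dot\theta_-^{(1)}$ and reflecting off $\Gamma_2\cup\Gamma_4$), equivalently reading off the explicit piecewise expression \eqref{thealeph} and its analogue for $p^\aleph$, gives the global $\mcc^0$ estimate \eqref{eq9723(1)}, the constant depending on $L$ only through the number of reflections. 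Only $\mcc^0$ (and piecewise $\mcc^1$) regularity is available here, since the data of \textbf{(a)} meet merely the zeroth-order compatibility; this is harmless because the solvability analysis to come needs only $\mcc^0$ bounds.

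For problem \textbf{(b)} in \eqref{eq920}--\eqref{eq921}, each inhomogeneous term on the right is a product of a gravity factor ($H_1(g,0)=O(g)$, $\bar\rho_-^{-2}-p_0^{-2}=O(g)$, $g$ itself, or $H_{2_-}=O(g\sigma)$) with a first-order quantity of $\dot U_-$, which is $O(\sigma)$ by \eqref{eq84}; hence all sources are $O(g\sigma)$. The entrance datum is $P_I^*(\eta)-\sigma P_I(\eta)$, which I would estimate as $O(g\sigma+\sigma^2)$ by writing, from \eqref{qc2}--\eqref{PIsharp} and \eqref{q4}, $P_I^*-\sigma P_I=\sigma\big(P_I(Y_0(\cdot\,;g,\sigma))-P_I(\cdot)\big)+O(g\sigma+\sigma^2)$ together with $Y_0(\eta;g,\sigma)-\eta=O(g+\sigma)$ (recall $Y_0(\eta;0,0)=\eta$). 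Applying the same characteristic $\mcc^0$ estimate to the linear problem \textbf{(b)} then gives \eqref{eq9723}.

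It remains to establish the sign property \eqref{sign}. Using $\dot\theta_-^{(1)}=\sigma\theta^\aleph$ and $P_I'=\wp P_I$, each branch in \eqref{thealeph} can be written as $-\mathcal{K}$ times $\int\wp(\tau)P_I(\tau)\,\dif\tau$ over a subinterval of $[0,1]$ of length $2\xi/\xi_1$ (for $\Theta_1$) or the corresponding positive quantity; by \eqref{P1.21} and \eqref{P_I}--\eqref{P_I1} one has $\wp P_I\le -gC_{I0}<0$ on $[0,1]$, so $\theta^\aleph\ge0$ and is strictly positive in the interior of $(0,\xi_1)\times(0,1)$, while \eqref{P1.23} (which keeps $\wp$ bounded away from $0$ by a $g$-independent constant off the endpoints) and $\|\wp P_I\|_{\mcc^0([0,1])}\le C(C_I,C_{I0})$ supply the two-sided bounds in \eqref{sign}. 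The two genuinely delicate points are: (i) keeping the orders in $g$ and $\sigma$ sharp throughout the decomposition, it being exactly the gap between the $O(\sigma)$ principal part $\sigma\theta^\aleph$ and the $O(g\sigma+\sigma^2)$ remainder that later forces the restriction $\sigma\le g^3$; and (ii) the positivity of $\theta^\aleph$, which rests entirely on the structural hypotheses \eqref{P_I}--\eqref{P1.23} encoding the broken pressure--gravity balance and must be checked branch by branch against the explicit solution \eqref{thealeph}, since $\theta^\aleph$ is only piecewise smooth and no maximum-principle shortcut is available.
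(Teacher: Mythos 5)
Your proposal is correct and follows essentially the same route as the paper's (very terse) proof: corner compatibility via \eqref{q2} plus the Li--Yu theory for the unique $\mcc^{2,\alpha}$ solution, the decomposition into problems $\textbf{(a)}$ and $\textbf{(b)}$ with the order counting $H_{2_-}=O(g\sigma)$ and $P_I^*-\sigma P_I=O(g\sigma+\sigma^2)$ driving \eqref{eq9723(1)}--\eqref{eq9723}, and the explicit formula \eqref{thealeph} together with \eqref{P1.21}--\eqref{P1.23} for the sign property \eqref{sign}. You merely supply the details (the Riemann invariants, the $O(\cdot)$ bookkeeping, and the branch-by-branch check that each $\Theta_i$ equals $-\mathcal{K}\int \wp\, P_I$ over a subinterval of $[0,1]$) that the paper leaves implicit.
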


\begin{proof}
By applying \eqref{q2}, the compatibility conditions hold at the corners up to second order. Then the existence of the unique solution $\dot{U}_-\in {\mcc}^{2,\alpha}({\Omega})$ can be obtained by employing the theory in the book \cite{LY}.
Moreover, the existence of the unique solutions $(\dot{p}_-^{(i)}, \dot{\theta}_-^{(i)})^\top\in {\mcc}^{0}({\Omega})$ can be easily obtained. In addition, it is easy to see \eqref{eq9723(1)} holds. Besides, \eqref{eq9723} can be derived immediately by applying the estimate $\eqref{eq84}$.
Finally, employing \eqref{PCI}-\eqref{P1.23} and \eqref{thealeph}, one can deduce that \eqref{sign} holds.

\end{proof}

%\begin{rem}\label{mb1}
%  \begin{equation}\label{eq3000}
%    \xi_1 =  \displaystyle\frac{\sqrt{\bar{M}_-^2 -1}}{p_0 q_0},\quad \xi_{*2} = \xi + \displaystyle\frac{\sqrt{\bar{M}_-^2 -1}}{p_0 q_0}(\eta -1) ,\quad \xi_{*3} = \xi - \displaystyle\frac{\sqrt{\bar{M}_-^2 -1}}{p_0 q_0}\eta.
%    \end{equation}
%\end{rem}

%\begin{rem}
%  When $\xi>\xi_1$, $\dot{\theta}_-^{(1)}$ no longer enjoys the sign-preserving property.
%\end{rem}

\subsection{Reformulation of the linearized R-H conditions \eqref{eq69}-\eqref{eq74}.}
%The equations \eqref{eq69}-\eqref{eq74} form a closed linear algebraic equations for $ (\dot{p}_+, \dot{q}_+) $ such that they can be expressed by $ \dot{U}_{-} $ on the free boundary  $ \dot{\Gamma}_s$.

The equation \eqref{eq69} can be rewritten as the following form:
\begin{equation}\label{x55}
  A_* (\dot{p}_+, \dot{q}_+)^\top = (\dot{J}_1 , \dot{J}_2)^\top,
\end{equation}
where $\dot{J}_i : = -{\mathbf{\alpha}}_{j-} \cdot {\dot{U}}_-$, $(i =1,2)$,
\[ A_* = \frac{[\bar{p}]}{\bar{\rho}_+\bar{q}_+}\begin{pmatrix}
   -\displaystyle\frac{1}{\bar{\rho}_+ \bar{c}_+^2}& -\displaystyle\frac{1}{\bar{q}_+}\\
  1 - \displaystyle\frac{\bar{p}_+}{\bar{\rho}_+ \bar{c}_+^2}& {\bar{\rho}_+\bar{q}_+} - \displaystyle\frac{\bar{p}_+}{ \bar{q}_+}\\
 \end{pmatrix}. \]
Then the following lemma holds on the shock front $\dot{\Gamma}_s$.
\begin{lem}\label{e1}
  On the shock front $\dot{\Gamma}_s$, it holds that
\begin{align}
\det(A_*) =& \displaystyle\frac{[\bar{p}]^2}{\bar{\rho}_+^2\bar{q}_+^3}(1-\bar{M}_+^2)\neq 0,\quad \text{as}\quad \bar{M}_+\neq 1,\label{x34}\\
 \dot{p}_+ \defs& \dot{\pounds}_1,\label{eq109}\\
  \dot{q}_+\defs& \dot{\pounds}_2,\label{eq110}\\
  \dot{\psi}' \defs& \dot{\pounds}_3,\label{eq111}
\end{align}
with
\begin{align}
\dot{\pounds}_1=&  \displaystyle\frac{\bar{\rho}_+\bar{q}_+^2}{\bar{M}_+^2-1}
\Big(\displaystyle\frac{1-q_0^2}{\bar{\rho}_-} {P}_I^*(\eta)
  + g \cdot\int_0^\xi \dot{\theta}_-(\tau,\eta)\dif \tau\notag\\
  &\qquad \quad \quad \quad - (1+ H_1(g,0))(2- q_0^2)\bar{\rho}_- q_0\cdot \partial_\eta\int_0^\xi \dot{\theta}_-(\tau,\eta)\dif \tau\Big),\label{sf}\\
\dot{\pounds}_2=&\frac{\bar{q}_+}{1-\bar{M}_+^2}\Big(\frac{1}{q_0^2}g\cdot \int_0^\xi \dot{\theta}_-(\tau,\eta)\dif \tau - (1+H_1(g,0))\frac{\bar{\rho}_-}{q_0}\partial_\eta \int_0^\xi \dot{\theta}_-(\tau,\eta)\dif \tau \Big),\\
  \dot{\pounds}_3=&\displaystyle\frac{\bar{q}_+\dot{\theta}_+ - \bar{q}_-\dot{\theta}_-}{(1+ H_1(g,0))[\bar{p}]}.
\end{align}

\end{lem}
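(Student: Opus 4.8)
The plan is to treat the linearized R--H relations \eqref{eq69}--\eqref{eq74} as an algebraic system for $(\dot p_+,\dot q_+,\dot\psi')$ along $\dot\Gamma_s$ and solve it explicitly, using the closed forms \eqref{x62}--\eqref{x60} of the coefficient vectors and the isothermal identity $\bar c_\pm^2\equiv 1$. Since the $\theta$-components of $\mathbf{\alpha}_{1\pm}$ and $\mathbf{\alpha}_{2\pm}$ vanish, the two relations $j=1,2$ in \eqref{eq69} couple only $\dot p_+$ and $\dot q_+$, which is exactly \eqref{x55} with $\dot J_i=-\mathbf{\alpha}_{i-}\cdot\dot U_-$. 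Expanding the $2\times2$ determinant of $A_*$, the two terms proportional to $\bar p_+/(\bar\rho_+\bar c_+^2\bar q_+)$ cancel, leaving $\det A_*=\frac{[\bar p]^2}{\bar\rho_+^2\bar q_+^2}\big(\frac{1}{\bar q_+}-\frac{\bar q_+}{\bar c_+^2}\big)$; substituting $\bar c_+^2=1$ and $\bar M_+^2=\bar q_+^2$ gives \eqref{x34}, and since the flow behind the shock is subsonic ($\bar M_+=1/q_0<1$) this quantity is strictly positive, hence $A_*$ is invertible and $(\dot p_+,\dot q_+)^\top$ is recovered by Cramer's rule.

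Thus $\dot p_+$ and $\dot q_+$ become explicit linear combinations of $\dot J_1$ and $\dot J_2$, and by \eqref{x62}--\eqref{x61} (with $\bar c_-^2=1$) each $\dot J_i$ is an explicit linear combination of $\dot p_-$ and $\dot q_-$ on $\dot\Gamma_s$. The key step is then to re-express $\dot p_-,\dot q_-$ through $\dot\theta_-$ by integrating the supersonic system in $\xi$ from $\Gamma_1$: the Bernoulli-type equation \eqref{eq65} integrates, using $\dot p_-(0,\cdot)=P_I^*$ and $\dot q_-(0,\cdot)=0$ from \eqref{eq82}, \eqref{qc2}, to
\[ \bar q_-\dot q_-(\xi,\eta)+\frac{\dot p_-(\xi,\eta)}{\bar\rho_-(\eta)}=\frac{P_I^*(\eta)}{\bar\rho_-(\eta)}-g\int_0^\xi\dot\theta_-(\tau,\eta)\dif\tau, \]
while \eqref{eq64}, solved for $\partial_\xi\dot p_-$ and integrated, yields
\[ \dot p_-(\xi,\eta)=P_I^*(\eta)+\frac{\bar\rho_-^2\bar q_-^3\big(1+H_1(g,0)\big)}{1-\bar M_-^2}\,\partial_\eta\!\int_0^\xi\dot\theta_-(\tau,\eta)\dif\tau-\frac{g\,\bar\rho_-}{1-\bar M_-^2}\int_0^\xi\dot\theta_-(\tau,\eta)\dif\tau. \]
All coefficients here depend on $\eta$ only (since $\bar q_-\equiv q_0$), so they commute with $\int_0^\xi(\cdot)\dif\tau$, and $\partial_\eta$ may be pulled outside the integral because the limits do not involve $\eta$. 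Substituting these identities into the Cramer formulas and inserting the background relations $\bar\rho_+=q_0^2\bar\rho_-$, $\bar q_+=1/q_0$, $\bar p_+=q_0^2\bar p_-$, $\bar M_-^2=q_0^2$ and $[\bar p]=(q_0^2-1)\bar\rho_-$ from \eqref{rho-}--\eqref{2.35}, the terms reorganise into precisely $\dot{\pounds}_1$ and $\dot{\pounds}_2$ (in particular the $P_I^*$-contributions must cancel in $\dot q_+$ but not in $\dot p_+$). Finally, \eqref{eq74} with $\mathbf{\alpha}_{3\pm}=\pm(0,\bar q_\pm,0)^\top$ reads $\bar q_+\dot\theta_+-\bar q_-\dot\theta_-=(1+H_1(g,0))[\bar p]\,\dot\psi'$, which is $\dot\psi'=\dot{\pounds}_3$.

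The only genuinely delicate part is the algebraic bookkeeping in the middle paragraph: one must track the $\eta$-dependence of $\bar\rho_-$ (equivalently $\bar p_-$) through both the $\xi$-integration and the subsequent $\partial_\eta$, and verify that the many terms produced by Cramer's rule collapse to the compact coefficients $\frac{1-q_0^2}{\bar\rho_-}$, $g$ and $(1+H_1(g,0))(2-q_0^2)\bar\rho_-q_0$ appearing in \eqref{sf}. Everything else is linear algebra on the explicit $2\times2$ system \eqref{x55} and the scalar relation \eqref{eq74}; this also makes transparent that $\dot p_+$ and $\dot q_+$ are completely determined by the already-constructed supersonic solution $\dot U_-$, whereas $\dot\psi'$ still carries the trace of the not-yet-determined $\dot\theta_+$.
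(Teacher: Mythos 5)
Your proposal is correct and follows essentially the same route as the paper: compute $\det A_*$, invert the $2\times2$ system for $(\dot p_+,\dot q_+)$, convert $\dot p_-,\dot q_-$ on $\dot\Gamma_s$ into $P_I^*$ and $\int_0^\xi\dot\theta_-\,\dif\tau$ by integrating the linearized supersonic equations \eqref{eq64}--\eqref{eq65} from $\Gamma_1$, and read off $\dot\psi'$ from \eqref{eq74}. The only (cosmetic) difference is that the paper works with the combinations $\dot p_-+\bar\rho_-\bar q_-\dot q_-$ and $\bar M_-^2\dot p_-+\bar\rho_-\bar q_-\dot q_-$ (equations \eqref{eq1011}--\eqref{eq1013}) rather than isolating $\dot p_-$ itself, which is linearly equivalent to your two integrated identities.
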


\begin{proof}
 By applying the definition of $A_*$, $\eqref{x34}$ can be obtained immediately.

 By employing the fact of $\bar{\rho}_+\bar{q}_+ = \bar{\rho}_-\bar{q}_-$, then \eqref{x55} implies that
\begin{align}
  &\frac{1}{\bar{\rho}_+ \bar{c}_+^2}\dot{p}_+ + \frac{1}{\bar{q}_+}\dot{q}_+ = \frac{1}{\bar{\rho}_- \bar{c}_-^2}\dot{p}_- + \frac{1}{\bar{q}_-}\dot{q}_-,\label{eq103}\\
  & \Big(\dot{p}_+ + \bar{\rho}_+\bar{q}_+\dot{q}_+\Big) - \bar{p}_+ \Big(\frac{1}{\bar{\rho}_+ \bar{c}_+^2}\dot{p}_+ + \frac{1}{\bar{q}_+}\dot{q}_+ \Big)\notag\\
   =& \Big(\dot{p}_- + \bar{\rho}_-\bar{q}_-\dot{q}_- \Big)- \bar{p}_-\Big( \frac{1}{\bar{\rho}_-\bar{c}_-^2}\dot{p}_- + \frac{1}{\bar{q}_-}\dot{q}_- \Big).\label{eq104}
\end{align}
Substituting $\eqref{eq103}$ into $\eqref{eq104}$, one has
\begin{equation}\label{eq00}
  \dot{p}_+ + \bar{\rho}_+\bar{q}_+\dot{q}_+ = \dot{p}_- + \bar{\rho}_-\bar{q}_-\dot{q}_- + [\bar{p}]\Big( \frac{1}{\bar{\rho}_-\bar{c}_-^2}\dot{p}_- +  \frac{1}{\bar{q}_-}\dot{q}_-  \Big).
\end{equation}
By employing the equations $\eqref{eq103}$ and $\eqref{eq00}$, it follows that
\begin{align}
  \displaystyle\frac{\bar{M}_+^2-1}{\bar{\rho}_+\bar{q}_+^2}\dot{p}_+ = & -\displaystyle\frac{1}{\bar{\rho}_+\bar{q}_+^2}\Big(\dot{p}_- + \bar{\rho}_-\bar{q}_-\dot{q}_-\Big) \notag\\
  &\,\, + \Big(1 - \displaystyle\frac{[\bar{p}]}{\bar{\rho}_+\bar{q}_+^2}\Big)
  \displaystyle\frac{1}{\bar{\rho}_-\bar{q}_-^2}
  \Big(\bar{M}_-^2\dot{p}_-
  +\bar{\rho}_-\bar{q}_-\dot{q}_-\Big),\label{x59}\\
  \dot{q}_+ = & -\frac{\bar{q}_+}{\bar{\rho}_+\bar{c}_+^2(1-\bar{M}_+^2)}\Big(\dot{p}_- + \bar{\rho}_-\bar{q}_-\dot{q}_-\Big)\notag\\
   &+ \displaystyle\frac{\bar{q}_+}{\bar{\rho}_-\bar{q}_-^2(1-\bar{M}_+^2)} \Big(1 - \displaystyle\frac{[\bar{p}]}{\bar{\rho}_+\bar{c}_+^2}\Big)
  \Big(\bar{M}_-^2\dot{p}_-
  +\bar{\rho}_-\bar{q}_-\dot{q}_-\Big)\label{eq001}.
\end{align}
Moreover, employing the equations $\eqref{eq64}$ and $\eqref{eq65}$, one can obtain
%\begin{align}\label{eq107}
% & \partial_\eta \dot{\theta}_- - \displaystyle\frac{1}{1+ H_1(g,0)}\displaystyle\frac{1}{\bar{\rho}_-^2\bar{q}_-^3}
%  \Big(-\bar{M}_-^2\partial_\xi \dot{p}_- - \bar{\rho}_-\bar{q}_-\partial_\xi \dot{q}_- - g \bar{\rho}_-\dot{\theta}_-\Big)\notag\\
%  &\quad - \displaystyle\frac{1}{1+ H_1(g,0)}\displaystyle\frac{g}{\bar{\rho}_- \bar{q}_-^3}\dot{\theta}_- = 0,
%\end{align}
%that is
\begin{equation}\label{eq1011}
  \partial_\eta \dot{\theta}_- + \displaystyle\frac{1}{1+ H_1(g,0)} \displaystyle\frac{1}{\bar{\rho}_-^2\bar{q}_-^3}
  \partial_\xi\Big(\bar{M}_-^2\dot{p}_- + \bar{\rho}_-\bar{q}_-\dot{q}_-\Big) = 0.
\end{equation}
Furthermore, one has
\begin{equation}\label{eq1012}
  \bar{M}_-^2\dot{p}_- + \bar{\rho}_-\bar{q}_-\dot{q}_-
  = \bar{M}_-^2 {P}_I^*(\eta) - (1+ H_1(g,0)){\bar{\rho}_-^2\bar{q}_-^3}
  \partial_\eta \int_0^\xi \dot{\theta}_-(\tau, \eta)\dif \tau.
\end{equation}
In addition, $\eqref{eq65}$ implies that
\begin{equation}\label{eq1013}
  \dot{p}_- + \bar{\rho}_-\bar{q}_-\dot{q}_-
  =  {P}_I^*(\eta)- g \bar{\rho}_-\int_0^\xi \dot{\theta}_-(\tau,\eta)\dif \tau.
\end{equation}
Substituting the expressions $\eqref{eq1012}$ and $\eqref{eq1013}$ into the equations $\eqref{x59}$ and \eqref{eq001}, then employing the equation \eqref{2.35}, one can obtain the equations $\eqref{eq109}$ and \eqref{eq110}.

Finally, substituting the coefficients $\alpha_{3\pm}$ in \eqref{x60} into the equation $\eqref{eq74}$, one can obtain the equation $\eqref{eq111}$ immediately.
\end{proof}

\subsection{Determine $ \dot{\xi}_{*} $ and $ \dot{U}_{+} $.}

With the help of Theorem \ref{cv0}, Lemma \ref{w0} and Lemma \ref{e1}, one can now determine the approximating position of the shock front.

By employing Theorem \ref{cv0} and taking
\begin{align*}
  &\mathcal{U}\defs \dot{p}_+, \,\,\mathcal{V} \defs\dot{\theta}_+,\,\, \mathcal{W} \defs \dot{q}_+, \,\,\mathcal{F}_1 \defs \displaystyle\frac{H_{2+}(g,\sigma)}{1+ H_1(g,0)}, \,\,\mathcal{F}_2 = \mathcal{F}_3 = \mathcal{F}_4 = 0,\\
  &\mathcal{U}_s\defs \dot{\pounds}_1,\,\, \mathcal{W}_s\defs \dot{\pounds}_2,\,\, \mathcal{U}_3\defs \dot{P}_e^{\sharp},\\
  &\mathcal{A}_1 \defs \displaystyle\frac{\bar{q}_+}{1+ H_1(g,0)}, \,\,\mathcal{A}_2 \defs \displaystyle\frac{1}{1+ H_1(g,0)}\frac{g}{\bar{q}_+^2},\,\, \mathcal{A}_3 \defs \displaystyle\frac{1}{1+ H_1(g,0)}\frac{1 - \bar{M}_+^2}{\bar{\rho}_+^2 \bar{q}_+^3},\\
  &\mathcal{A}_4 \defs \displaystyle\frac{1}{1+ H_1(g,0)}\frac{g}{\bar{\rho}_+\bar{q}_+^3},\,\,
  \mathcal{A}_5 \defs \frac{1}{\bar{\rho}_+\bar{q}_+},\,\,\mathcal{A}_6 \defs\frac{g}{\bar{q}_+},
\end{align*}
then \eqref{x30} yields that
\begin{align}\label{eq117}
  0 = \int_0^{1} \mathcal{A}_+ \mathcal{A}_3 \Big( \dot{\pounds}_1 (\dot{\xi}_*,\eta) - \dot{P}_e^{\sharp}(\eta)\Big)\dif \eta,
 \end{align}
where
\begin{align}\label{eq87}
 \mathcal{A}_+ =& \exp \Big(-\int_0^\eta \mathcal{A}_4(\tau)\dif \tau\Big)\notag\\
  = & \exp\Big(-\displaystyle\frac{g}{1+ H_1(g,0)}\int_0^{\eta} \frac{1}{\bar{q}_+^3\bar{\rho}_+ (\tau)}\dif \tau\Big) \\
 = & \exp\Big(-\displaystyle\frac{g q_0}{1+ H_1(g,0)}\int_0^{\eta} \frac{1}{\bar{\rho}_-(\tau)}\dif \tau\Big)\notag\\
 % =& \exp\left(-\displaystyle\frac{gq_0}{1+ H_1(g,0)} \cdot \left(-\frac{(1+H_1(g,0))q_0}{g}(\ln\bar{\rho}_-(\eta)- \ln \bar{\rho}_-(0))\right)\right)\\
  = &\left(\displaystyle\frac{\bar{\rho}_-}{p_0}\right)^{q_0^2}.\notag
\end{align}

We can now prove the following lemma.

\begin{lem}\label{posi}
Suppose that \eqref{PCI}-\eqref{PE} hold.
If\begin{equation}
  \mathcal{R}_{g\sigma}(\xi_1 )< \mathcal{P}_{g\sigma}< \mathcal{R}_{g\sigma}(0),
\end{equation}
 where
 \begin{align}
  \mathcal{R}_{g\sigma}(\xi)\defs & g \cdot{K}\int_0^1 \int_0^\xi \dot{\theta}_-^{(1)}(\tau,\eta)\dif \tau \dif \eta,\\
    \mathcal{P}_{g\sigma}\defs &  g\sigma\cdot \displaystyle\frac{q_0^2 -1}{p_0^2 q_0} \int_0^1 {P}_E(\eta)\dif \eta,
\end{align}
with ${K}: = - \displaystyle\frac{(q_0^2 -1)^2}{p_0 q_0} < 0$,
then there exists a $\bar{\xi}_*\in (0, \xi_1)$, such that
\begin{equation}
  \mathcal{R}_{g\sigma} (\bar{\xi}_*) = \mathcal{P}_{g\sigma}.
\end{equation}
Furthermore, for $0<{\sigma} \leq g^3$,
 there exists a unique $\dot{\xi}_*\in (0, \xi_1)$ such that
\begin{align}
\mathcal{R} (\dot{\xi}_*) =& \mathcal{P},
\end{align}
where
\begin{align}
  \mathcal{R}({\xi})\defs & g\cdot K\int_0^1 \Big(\displaystyle\frac{\bar{\rho}_-}{p_0}\Big)^{q_0^2-1}
  \int_0^{{\xi}} \dot{\theta}_-(\tau,\eta)\dif \tau \dif \eta,\\
  \mathcal{P}\defs & \displaystyle\frac{q_0^2 -1}{p_0^{q_0^2}q_0}\int_0^1 \bar{\rho}_-^{q_0^2-2}\Big(\frac{1}{q_0^2} \dot{P}_e^{\sharp}(\eta) -  {P}_I^*(\eta)\Big)\dif \eta.
\end{align}
In addition,
\begin{align}
   |\dot{\xi}_*  - \bar{\xi}_*|  \leq& \dot{C}_* g,
\end{align}
where the constant $\dot{C}_*$ depends on $p_0, q_0$, $P_I$, $P_E$ and  $\|{\theta}^\aleph\|_{\mathcal{C}^0((0,\bar{\xi}_*)\times (0,1))}$.

 \end{lem}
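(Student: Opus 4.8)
The plan is to regard $\mathcal{R}_{g\sigma}$ and $\mathcal{P}_{g\sigma}$ as the leading, genuinely $g\sigma$-sized, parts of $\mathcal{R}$ and $\mathcal{P}$, to solve the leading equation by strict monotonicity and the intermediate value theorem, and then to perturb. First I would note that $\mathcal{R}_{g\sigma}$ is $\mcc^1$ in $\xi$ with $\mathcal{R}_{g\sigma}'(\xi)=gK\int_0^1\dot\theta_-^{(1)}(\xi,\eta)\,\dif\eta$; since $K<0$ and the sign-preserving estimate \eqref{sign} of Lemma \ref{w0} gives $C_0\sigma\le\dot\theta_-^{(1)}(\xi,\eta)\le C_1\sigma$ on $(0,\xi_1)\times(0,1)$, $\mathcal{R}_{g\sigma}$ is strictly decreasing on $[0,\xi_1]$ with $\mathcal{R}_{g\sigma}(0)=0$. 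Hence the hypothesis $\mathcal{R}_{g\sigma}(\xi_1)<\mathcal{P}_{g\sigma}<\mathcal{R}_{g\sigma}(0)$ produces, via the intermediate value theorem, a unique $\bar\xi_*\in(0,\xi_1)$ with $\mathcal{R}_{g\sigma}(\bar\xi_*)=\mathcal{P}_{g\sigma}$. Because $\dot\theta_-^{(1)}=\sigma\theta^\aleph$ one also has $\mathcal{R}_{g\sigma}=g\sigma\,\mathcal{R}_{g\sigma}^\natural$ and $\mathcal{P}_{g\sigma}=g\sigma\,\mathcal{P}_{g\sigma}^\natural$, so the positive numbers $|\mathcal{P}_{g\sigma}|$ and $|\mathcal{P}_{g\sigma}-\mathcal{R}_{g\sigma}(\xi_1)|$ are bounded below by $c\,g\sigma$ for some $c>0$ depending only on $p_0,q_0,P_I,P_E$.

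Next I would expand $\mathcal{R}$ and $\mathcal{P}$ around the leading problem. Using $\dot\theta_-=\dot\theta_-^{(1)}+\dot\theta_-^{(2)}$ with $\|\dot\theta_-^{(1)}\|_{\mcc^0}\le\dot C_-^{\natural(1)}\sigma$ and $\|\dot\theta_-^{(2)}\|_{\mcc^0}\le\dot C_-^{\natural(2)}(g\sigma+\sigma^2)$ from Lemma \ref{w0}, the uniform expansion $\bar\rho_-=\bar p_-=p_0+O(g)$ on $[0,1]$ (so $(\bar\rho_-/p_0)^{q_0^2-1}=1+O(g)$ and $\bar\rho_-^{q_0^2-2}=p_0^{q_0^2-2}(1+O(g))$), and the expansion \eqref{iniPE} giving $\frac{1}{q_0^2}\dot P_e^\sharp-P_I^*=g\sigma P_E+O(g^2\sigma+\sigma^2)$, I obtain $\mathcal{R}=\mathcal{R}_{g\sigma}+E_1$ and $\mathcal{P}=\mathcal{P}_{g\sigma}+E_2$ with $\|E_1\|_{\mcc^1([0,\xi_1])}\le C(g^2\sigma+g\sigma^2)$ and $|E_2|\le C(g^2\sigma+\sigma^2)$, where $C$ also depends on $\|\theta^\aleph\|_{\mcc^0((0,\xi_1)\times(0,1))}$. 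Since $0<\sigma\le g^3$ (in fact $\sigma\le g^{2+\varepsilon}$ suffices) one has $\sigma^2\le g^2\sigma$, so both errors are $O(g^2\sigma)$, i.e. one power of $g$ below the leading terms. For $g$ small, $\|\dot\theta_-^{(2)}\|_{\mcc^0}\le2\dot C_-^{\natural(2)}g\sigma$, and then \eqref{sign} forces $\dot\theta_-(\xi,\eta)\ge\frac12C_0\sigma>0$ on $(0,\xi_1)\times(0,1)$, whence $\mathcal{R}'(\xi)=gK\int_0^1(\bar\rho_-/p_0)^{q_0^2-1}\dot\theta_-(\xi,\eta)\,\dif\eta<0$ on $(0,\xi_1)$: $\mathcal{R}$ is strictly decreasing on $[0,\xi_1]$ with $\mathcal{R}(0)=0$. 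Moreover $\mathcal{P}=\mathcal{P}_{g\sigma}+E_2<0=\mathcal{R}(0)$ and $\mathcal{R}(\xi_1)=\mathcal{R}_{g\sigma}(\xi_1)+E_1(\xi_1)<\mathcal{P}$ for $g$ small, because the $O(g^2\sigma)$ errors are dominated by the $c\,g\sigma$ lower bounds from the first step; the intermediate value theorem then yields the unique $\dot\xi_*\in(0,\xi_1)$ with $\mathcal{R}(\dot\xi_*)=\mathcal{P}$.

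For the quantitative bound I would subtract $\mathcal{R}_{g\sigma}(\bar\xi_*)=\mathcal{P}_{g\sigma}$ from $\mathcal{R}(\dot\xi_*)=\mathcal{P}$ to get $\mathcal{R}_{g\sigma}(\dot\xi_*)-\mathcal{R}_{g\sigma}(\bar\xi_*)=E_2-E_1(\dot\xi_*)$, apply the mean value theorem $\mathcal{R}_{g\sigma}(\dot\xi_*)-\mathcal{R}_{g\sigma}(\bar\xi_*)=\mathcal{R}_{g\sigma}'(\zeta)(\dot\xi_*-\bar\xi_*)$ with $\zeta$ between $\dot\xi_*$ and $\bar\xi_*$ (so $\zeta\in(0,\xi_1)$), and use $|\mathcal{R}_{g\sigma}'(\zeta)|\ge g|K|C_0\sigma$ from \eqref{sign}; this gives $|\dot\xi_*-\bar\xi_*|\le(|E_2|+|E_1(\dot\xi_*)|)/(g|K|C_0\sigma)\le\dot C_*\,g$, with $\dot C_*$ depending on $p_0,q_0,P_I,P_E$ and $\|\theta^\aleph\|_{\mcc^0((0,\bar\xi_*)\times(0,1))}$ (the last through the bound on $\dot\theta_-^{(1)}=\sigma\theta^\aleph$ inside $E_1$). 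The main obstacle is the bookkeeping of orders in the middle step: all four quantities $\mathcal{R},\mathcal{R}_{g\sigma},\mathcal{P},\mathcal{P}_{g\sigma}$ are of size $g\sigma$, so only one power of $g$ separates the leading terms from the corrections, and one must verify carefully that the genuine $g\sigma$-size of the leading terms — which rests entirely on the sign-preserving lower bound \eqref{sign} for $\theta^\aleph$ — dominates the $O(g^2\sigma)$ corrections; this is exactly where the restriction $\sigma\le g^3$ (or $g^{2+\varepsilon}$) is needed, namely to eliminate the $\sigma^2$-contributions to the errors.
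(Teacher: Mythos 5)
Your proof is correct and follows essentially the same route as the paper's: isolate the leading $g\sigma$-order parts $\mathcal{R}_{g\sigma},\mathcal{P}_{g\sigma}$, bound the remainders by $O(g^2\sigma+\sigma^2)$ using Lemma \ref{w0}, \eqref{iniPE} and $\bar{\rho}_-=p_0+O(g)$, and use the sign-preserving lower bound \eqref{sign} to keep the leading derivative of size $g\sigma$ so that the corrections are one power of $g$ smaller (this is exactly where $0<\sigma\le g^3$ enters, as you note). The only difference is mechanical: you use strict monotonicity together with the intermediate and mean value theorems where the paper expands $\dot{I}(\delta\dot{\xi}_*)=\mathcal{R}(\bar{\xi}_*+\delta\dot{\xi}_*)-\mathcal{P}$ and invokes the implicit function theorem to get $|\delta\dot{\xi}_*|\le C(g+\sigma/g)\le\dot{C}_*g$; the two arguments are equivalent here, and your global monotonicity of $\mathcal{R}$ on $(0,\xi_1)$ delivers the uniqueness assertion slightly more directly.
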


\begin{proof}
 Now we analyze the identity \eqref{eq117}.
By employing the equation \eqref{sf}, it holds that
\begin{align}\label{eq1017}
  &\int_0^{1} \mathcal{A}_+ \mathcal{A}_3 \dot{\pounds}_1 (\dot{\xi}_*,\eta) \dif \eta\notag\\
   = &\int_0^{1}\displaystyle\frac{1}{1+ H_1(g,0)}
  \Big(\displaystyle\frac{\bar{\rho}_-}{p_0}\Big)^{q_0^2}\displaystyle
  \frac{1}{\bar{\rho}_+\bar{q}_+}\displaystyle\frac{q_0^2 -1}{\bar{\rho}_-}{P}_I^*(\eta)\dif \eta\\
  & - \int_0^{1}\displaystyle\frac{1}{1+ H_1(g,0)}
  \Big(\displaystyle\frac{\bar{\rho}_-}{p_0}\Big)^{q_0^2}\displaystyle
  \frac{1}{\bar{\rho}_+\bar{q}_+} g \int_0^{\dot{\xi}_*} \dot{\theta}_-(\tau,\eta)\dif \tau \dif \eta\notag\\
  & + \int_0^{1}
  \Big(\displaystyle\frac{\bar{\rho}_-}{p_0}\Big)^{q_0^2}\Big(2-q_0^2\Big)
  \partial_\eta\int_0^{\dot{\xi}_*}\dot{\theta}_-(\tau,\eta)\dif \tau \dif\eta \notag\\
   : = & J_1 + J_2 + J_3,\notag
\end{align}
where
\begin{align*}
  &J_1\defs \int_0^{1}\displaystyle\frac{1}{1+ H_1(g,0)}
  \Big(\displaystyle\frac{\bar{\rho}_-}{p_0}\Big)^{q_0^2}\displaystyle
  \frac{1}{\bar{\rho}_+\bar{q}_+}\displaystyle\frac{q_0^2 -1}{\bar{\rho}_-}{P}_I^*(\eta)\dif \eta,\\
  &J_2\defs  - g \int_0^{1}\displaystyle\frac{1}{1+ H_1(g,0)}
  \Big(\displaystyle\frac{\bar{\rho}_-}{p_0}\Big)^{q_0^2}\displaystyle
  \frac{1}{\bar{\rho}_+\bar{q}_+} \int_0^{\dot{\xi}_*} \dot{\theta}_-(\tau,\eta)\dif \tau \dif \eta ,\\
  &J_3\defs \int_0^{1}
  \Big(\displaystyle\frac{\bar{\rho}_-}{p_0}\Big)^{q_0^2}\Big(2-q_0^2\Big)
  \partial_\eta\int_0^{\dot{\xi}_*}\dot{\theta}_-(\tau,\eta)\dif \tau \dif\eta.
\end{align*}
For the term $J_3$, integrating by parts, one has
\begin{align}\label{eq1018}
  J_3
   %=& -\int_0^1q_0^2 \Big(2-q_0^2\Big) \Big(p_0^{1-q_0^{2}}\bar{\rho}_-^{q_0^{2}-1}\Big)
%  \displaystyle\frac{\partial_\eta\bar{\rho}_-}{p_0}
%  \int_0^{\dot{\xi}_*} \dot{\theta}_-(\tau,\eta)\dif \tau \dif\eta\notag\\
  % =& g\int_0^1 \displaystyle\frac{1}{1+ H_1(g,0)} \Big(\displaystyle\frac{\bar{\rho}_-}{p_0}\Big)^{q_0^2-1} \Big(2-q_0^2\Big)\displaystyle\frac{q_0}{p_0}
%  \int_0^{\dot{\xi}_*}\dot{\theta}_-(\tau,\eta)\dif \tau \dif \eta\notag\\
  =& \displaystyle\frac{ g}{1+ H_1(g,0)}\displaystyle\frac{(2-q_0^2)q_0}{p_0} \int_0^1  \Big(\displaystyle\frac{\bar{\rho}_-}{p_0}\Big)^{q_0^2-1}
  \int_0^{\dot{\xi}_*}\dot{\theta}_-(\tau,\eta)\dif \tau \dif \eta.
\end{align}
Furthermore, one can obtain
\begin{align}\label{eq1019}
 & J_2 + J_3\notag\\
   =& \displaystyle\frac{g}{1+ H_1(g,0)} \Big(-\displaystyle\frac{1}{p_0 q_0}+ \displaystyle\frac{2-q_0^2}{p_0}q_0\Big)\int_0^1 \Big(\displaystyle\frac{\bar{\rho}_-}{p_0}\Big)^{q_0^2-1}
  \int_0^{\dot{\xi}_*} \dot{\theta}_-(\tau,\eta)\dif \tau \dif \eta\\
   =& -\displaystyle\frac{g}{1+ H_1(g,0)}  \displaystyle\frac{(q_0^2 -1)^2}{p_0 q_0}\int_0^1 \Big(\displaystyle\frac{\bar{\rho}_-}{p_0}\Big)^{q_0^2-1}
  \int_0^{\dot{\xi}_*} \dot{\theta}_-(\tau,\eta)\dif \tau \dif \eta.\notag
  \end{align}
Moreover,
\begin{align}
 &\int_0^{1}  \mathcal{A}_+ \mathcal{A}_3 \dot{P}_e^\sharp(\eta)\dif \eta - J_1\notag\\
   =& \displaystyle\frac{1}{1+ H_1(g,0)}\int_0^{1}
  \Big(\displaystyle\frac{\bar{\rho}_-}{p_0}\Big)^{q_0^2}\displaystyle
  \frac{1}{\bar{\rho}_+\bar{q}_+}
  \Big(\frac{1-\bar{M}_+^2}
  {\bar{\rho}_+\bar{q}_+^2} \dot{P}_e^\sharp(\eta) - \displaystyle\frac{q_0^2 -1}{\bar{\rho}_-}{P}_I^*(\eta)\Big)\dif \eta\\
   = &\displaystyle\frac{1}{1+ H_1(g,0)}\int_0^{1}
  \Big(\displaystyle\frac{\bar{\rho}_-}{p_0}\Big)^{q_0^2}\displaystyle
  \frac{1}{\bar{\rho}_+\bar{q}_+}
  \displaystyle\frac{q_0^2 -1}{\bar{\rho}_-}\Big(\frac{1}{q_0^2} \dot{P}_e^\sharp(\eta) -{P}_I^*(\eta)\Big)\dif \eta\notag\\
   =&\displaystyle\frac{1 }{1+ H_1(g,0)}\displaystyle\frac{q_0^2 -1}{p_0^{q_0^2}q_0}\int_0^1 \bar{\rho}_-^{q_0^2-2}\Big(\frac{1}{q_0^2} \dot{P}_e^\sharp(\eta) -  {P}_I^*(\eta)\Big)\dif \eta.\notag
\end{align}
Then the equation \eqref{eq117} yields that
\begin{align}
   J_2 + J_3 = \int_0^{1}  \mathcal{A}_+ \mathcal{A}_3 \dot{P}_e^\sharp(\eta)\dif \eta - J_1.
\end{align}
That is
\begin{equation}
\begin{split}
  &-g\displaystyle\frac{(q_0^2 -1)^2}{p_0 q_0}\int_0^1 \Big(\displaystyle\frac{\bar{\rho}_-}{p_0}\Big)^{q_0^2-1}
  \int_0^{\dot{\xi}_*} \dot{\theta}_-(\tau,\eta)\dif \tau \dif \eta\\
  = & \displaystyle\frac{q_0^2 -1}{p_0^{q_0^2}q_0}\int_0^1 \bar{\rho}_-^{q_0^2-2}\Big(\frac{1}{q_0^2} \dot{P}_e^\sharp(\eta) - {P}_I^*(\eta)\Big)\dif \eta.
\end{split}
\end{equation}
Let
\begin{align}
  \mathcal{R}({\xi})\defs & g\cdot K\int_0^1 \Big(\displaystyle\frac{\bar{\rho}_-}{p_0}\Big)^{q_0^2-1}
  \int_0^{{\xi}} \dot{\theta}_-(\tau,\eta)\dif \tau \dif \eta,\\
  \mathcal{P}\defs & \displaystyle\frac{q_0^2 -1}{p_0^{q_0^2}q_0}\int_0^1 \bar{\rho}_-^{q_0^2-2}\Big(\frac{1}{q_0^2} \dot{P}_e^\sharp(\eta) -  {P}_I^*(\eta)\Big)\dif \eta,
\end{align}
where ${K}: = - \displaystyle\frac{(q_0^2 -1)^2}{p_0 q_0} < 0$.
Applying \eqref{iniPE}, one has
\begin{align}
\frac{1}{q_0^2} \dot{P}_e^\sharp(\eta) - {P}_I^*(\eta)= {P}_E(\eta)g\sigma+ O(1)g^2 \sigma+ O(1)\sigma^2.
 \end{align}
In addition, by employing \eqref{rho-}, it follows that
 \begin{align}\label{rh-}
  \bar{\rho}_-^{q_0^2-2} = \Big(p_0 - \displaystyle\frac{g}{(1+ H_1(g,0))q_0}\eta \Big)^{q_0^2-2} = p_0^{q_0^2-2}- \frac{q_0^2-2}{q_0}p_0^{q_0^2-3}\eta g + O(1)g^2.
     \end{align}
     Therefore,
     \begin{align}
        \bar{\rho}_-^{q_0^2-2}\Big(\frac{1}{q_0^2} \dot{P}_e^\sharp(\eta) - {P}_I^*(\eta)\Big)
       % =& \Big(p_0^{q_0^2-2}- \frac{q_0^2-2}{q_0}p_0^{q_0^2-3}\eta g + O(g^2) \Big)\cdot \Big(\dot{P}_E(\eta)g\sigma+ O(1)g^2 \sigma \Big)\notag\\
        =p_0^{q_0^2-2} {P}_E(\eta)g\sigma + O(1)g^2 \sigma+ O(1)\sigma^2,
     \end{align}
    where $O(1)$ depends on $p_0, q_0$, $P_I$ and ${P}_E$.
  % Assume that
%   \begin{align}
%      \dot{P}_e(\eta;g,\sigma)\defs A(\eta)\sigma + B(\eta)g + b_{11}(\eta)\sigma^2 + b_{12}(\eta)g\sigma + b_{22}(\eta)g^2.
%   \end{align}
%    Now we consider
%    \begin{align}
%      &\Big(  p_0^{q_0^2-2}- \frac{q_0^2-2}{q_0}p_0^{q_0^2-3}\eta g + O(g^2)\Big)\cdot \Big(\frac{1}{q_0^2} \dot{P}_e(\eta) -  \sigma \dot{P}_I(\eta)\Big)\notag\\
%      =& \Big(  p_0^{q_0^2-2}- \frac{q_0^2-2}{q_0}p_0^{q_0^2-3}\eta g + O(g^2)\Big)\notag\\
%      &\cdot \Big(\frac{1}{q_0^2}\Big(A(\eta)\sigma + B(\eta)g + b_{11}(\eta)\sigma^2 + b_{12}(\eta)g\sigma + b_{22}(\eta)g^2 \Big) -  \sigma \dot{P}_I(\eta)\Big).
%    \end{align}
%    Let
%    \begin{align}
%     B(\eta) =& b_{11}(\eta)= b_{22}(\eta)=0,\\
%     A(\eta) =& q_0^2 \dot{P}_I(\eta),\\
%    b_12(\eta)=& P_E(\eta).
%    \end{align}
%    That is
%    \begin{align}
% A(\eta) =& q_0^2 \dot{P}_I(\eta),\\
%     b_{12}(\eta) = & \frac{q_0^2 -2}{p_0q_0} \eta A(\eta) + p_0^{2-q_0^2} q_0^2 P_E(\eta)\notag\\
%     =& \frac{q_0^2 -2}{p_0}q_0 \eta \dot{P}_I(\eta) + p_0^{2-q_0^2} q_0^2 P_E(\eta).
%    \end{align}
%    Therefore,
%      \begin{align}
%      \dot{P}_e(\eta)\defs q_0^2 \dot{P}_I(\eta)\sigma + \Big(\frac{q_0^2 -2}{p_0}q_0 \eta \dot{P}_I(\eta) + p_0^{2-q_0^2} q_0^2 P_E(\eta)\Big)g\sigma.
%   \end{align}
Denote
 \begin{align}
 \mathcal{R}_{g\sigma}(\xi)\defs & g\cdot {K}\int_0^1 \int_0^\xi \dot{\theta}_-^{(1)}(\tau,\eta)\dif \tau \dif \eta,\label{eq163}\\
    \mathcal{P}_{g\sigma}\defs & g\sigma\cdot  \displaystyle\frac{q_0^2 -1}{p_0^2 q_0}\int_0^1 {P}_E(\eta)\dif \eta.
\end{align}
By applying Lemma \ref{w0}, it is obvious that $\mathcal{R}_{g\sigma}^{'}(\xi)< 0$. Furthermore, applying \eqref{PCI}-\eqref{P1.23} and \eqref{expressedR}, one has
\begin{equation}\label{eq164}
\begin{split}
  &-\tilde{C}_0 g\sigma \geq \inf \limits_{\xi\in (0,\xi_1)}\mathcal{R}_{g\sigma}(\xi)= \mathcal{R}_{g\sigma}(\xi_1)= g\cdot {K}\int_{0}^{1}\int_{0}^{\xi_1} \dot{\theta}_-^{(1)}(\tau,\eta)\dif \tau \dif \eta\\
  =& g \sigma \cdot \mathcal{R}_{g\sigma}^\natural(\xi_1)
  \geq -\tilde{C}_1 g\sigma,
\end{split}
\end{equation}
where the constants $\tilde{C}_0, \tilde{C}_1>0$ and depend on $C_0$, $C_1$, $\xi_1$, $K$, $C_I$ and $C_{Ii}$, $(i=0,1,2)$. In addition,
\begin{equation}
  \begin{aligned}
  &\sup \limits_ {\xi\in (0,\xi_1)}\mathcal{R}_{g\sigma}(\xi) = \mathcal{R}_{g\sigma} (0) = 0.
  \end{aligned}
\end{equation}
 Obviously, there exists a $\bar{\xi}_*\in (0 , \xi_1)$ such that
\begin{equation}\label{eq270000}
  \mathcal{R}_{g\sigma} (\bar{\xi}_*) = \mathcal{P}_{g\sigma},
\end{equation}
if and only if
\begin{equation}\label{eq167}
  \mathcal{R}_{g\sigma}(\xi_1 )< \mathcal{P}_{g\sigma}< \mathcal{R}_{g\sigma}(0).
\end{equation}
Let $\dot{\xi}_{*} = \bar{\xi}_* + \delta\dot{\xi}_*$, denote
\begin{align}
  \dot{I}(\delta\dot{\xi}_*; \mathcal{R}(\dot{\xi}_{*}),\mathcal{P}; \dot{U}_-)
  \defs\mathcal{R}(\dot{\xi}_{*}) - \mathcal{P}.
\end{align}
Applying \eqref{eq270000}, one has
\begin{equation}
  \dot{I}(0; \mathcal{R}_{g\sigma}(\bar{\xi}_*),\mathcal{P}_{g\sigma} ;\dot{U}_-) = \mathcal{R}_{g\sigma}(\bar{\xi}_*) - \mathcal{P}_{g\sigma} = 0.
\end{equation}
Moreover,
\begin{equation}\label{zx}
\begin{split}
  \dot{I}(\delta\dot{\xi}_*; \mathcal{R}(\dot{\xi}_{*}),\mathcal{P}; \dot{U}_-) =& g\cdot {K}\int_0^{1} \dot{\theta}_-^{(1)}(\bar{\xi}_*,\eta)\dif \eta \cdot \delta \dot{\xi}_* + O(1)\big(g^2 \sigma + g\sigma^2 \big)\cdot \delta \dot{\xi}_* \\
  &+ O(1)g^2\sigma+ O(1)\sigma^2,
\end{split}
\end{equation}
where $O(1)$ depends on $p_0, q_0$, $P_I$, $P_E$ and $\bar{\xi}_*$.

The expansion \eqref{zx} yields that
\begin{align}
  \displaystyle\frac{\partial \dot{I}}{\partial \delta\dot{\xi}_*}(0; \mathcal{R}_{g\sigma}(\bar{\xi}_*),\mathcal{P}_{g\sigma} ;\dot{U}_-) =  g\cdot  {K}\int_0^{1} \dot{\theta}_-^{(1)}(\bar{\xi}_*,\eta)\dif \eta +O(1)\big(g^2 \sigma + g\sigma^2 \big)< 0.
  \end{align}
 Therefore, by applying the implicit function theorem and $0<{\sigma} \leq g^3$, there exists a unique $\delta\dot{\xi}_*$ such that
 \begin{align}
   |\delta\dot{\xi}_* |  \leq \bar{C}_* \Big(g+\frac{\sigma}{g}\Big)\leq \dot{C}_* g ,
 \end{align}
 where the constant $\dot{C}_*$ depends on $p_0, q_0$, $P_I$, $P_E$ and $\|{\theta}^\aleph\|_{\mathcal{C}^0((0,\bar{\xi}_*)\times (0,1))}$. Furthermore, for sufficiently small $g$,
 %if
% \begin{align}
%0< g<  \min\Big\{\frac{\bar{\xi}_*}{\dot{C}_*}, \, \frac{\xi_1 - \bar{\xi}_*}{\dot{C}_*} \Big\}.
% \end{align}
 there exists a $\dot{\xi}_*\in (0 , \xi_1)$ such that
  \begin{equation}
  \mathcal{R} (\dot{\xi}_{*}) = \mathcal{P}.
\end{equation}

\end{proof}

Once $\dot{\xi}_*$ is determined, then we can determine $\dot{U}_+$ in the domain $\dot{\Omega}_+$, as the following lemma shows:

 \begin{lem}\label{lem5}
 Let $\beta>2$.  Under the assumptions of Lemma \ref{posi}, there exists a unique solution $\dot{U}_+$ to the equations \eqref{eq66}-\eqref{eq68} with the boundary conditions \eqref{eq83}-\eqref{eq83a} and \eqref{eq109}-\eqref{eq111}. Moreover, the following estimate holds:
\begin{equation}\label{eq168}
\begin{aligned}
 & \| \dot{U}_+ \|_{(\dot{\Omega}_+;\dot{\Gamma}_s)} + \|\dot{\psi}'\|_{W_\beta^{1-\frac{1}{\beta}}(\dot{\Gamma}_s)}\\
  \leq& {C} \cdot\Big(\| {P}_I^*\|_{\mcc^{2,\alpha}(\Gamma_1)} + \|\dot{P}_e^\sharp\|_{\mcc^{2,\alpha}(\Gamma_3)} + H_{2_+}(g, \sigma) \Big) \leq \dot{C}_+ \sigma,
  \end{aligned}
\end{equation}
where the constant $\dot{C}_+$ depends only on $\bar{U}_\pm$, $L$, $P_I$, $P_E$ and $\beta$.
\end{lem}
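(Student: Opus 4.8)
The plan is to deduce the assertion almost directly from Theorem \ref{cv0}. First I would set up the dictionary already anticipated in the passage preceding Lemma \ref{posi}: in the abstract problem \eqref{x}--\eqref{x1} take $\xi_0 = \dot{\xi}_*$, $\mathcal{U} = \dot{p}_+$, $\mathcal{V} = \dot{\theta}_+$, $\mathcal{W} = \dot{q}_+$, $\mathcal{F}_1 = H_{2_+}(g,\sigma)/(1+H_1(g,0))$, $\mathcal{F}_2 = \mathcal{F}_3 = \mathcal{F}_4 = 0$, $\mathcal{U}_s = \dot{\pounds}_1$, $\mathcal{W}_s = \dot{\pounds}_2$, $\mathcal{U}_3 = \dot{P}_e^\sharp$, and the coefficients $\mathcal{A}_1,\dots,\mathcal{A}_6$ as listed there, so that the equations \eqref{eq66}--\eqref{eq68} (after dividing \eqref{eq66} by $1+H_1(g,0)$ and \eqref{eq68} by $\bar{q}_+$) together with the boundary conditions \eqref{eq83}--\eqref{eq83a} and \eqref{eq109}--\eqref{eq110} become exactly \eqref{x}--\eqref{x1}. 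Then I would check the structural hypotheses of Theorem \ref{cv0}: since $\bar{q}_+ = 1/q_0$ is constant, $\mathcal{A}_1,\mathcal{A}_2,\mathcal{A}_6$ are genuine constants while $\mathcal{A}_3(\eta),\mathcal{A}_4(\eta),\mathcal{A}_5(\eta)$ are smooth functions of $\eta$; all six are strictly positive on $[0,1]$ — positivity of $\mathcal{A}_2,\mathcal{A}_4,\mathcal{A}_6$ uses $g>0$ and positivity of $\mathcal{A}_3$ uses $\bar{M}_+<1$ — and $\inf_{\eta}\mathcal{A}_3(\eta)>0$ because $\bar{\rho}_+$ stays bounded and $1-\bar{M}_+^2$ stays bounded away from zero.

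Second, I would verify the solvability condition \eqref{x30}. With $\mathcal{F}_2 \equiv 0$ it reads $\int_0^1 \mathcal{A}_3(\eta)\mathcal{A}_+(\eta)\big(\dot{\pounds}_1(\dot{\xi}_*,\eta) - \dot{P}_e^\sharp(\eta)\big)\,\dif\eta = 0$, which is precisely the identity \eqref{eq117} obtained in this section. But \eqref{eq117} is exactly the equation that Lemma \ref{posi} solves for $\dot{\xi}_*$: under the hypotheses \eqref{PCI}--\eqref{PE} and $0<\sigma\le g^3$, Lemma \ref{posi} furnishes the unique $\dot{\xi}_* \in (0,\xi_1)$ with $\mathcal{R}(\dot{\xi}_*) = \mathcal{P}$, equivalently \eqref{eq117}. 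Hence the solvability condition holds by the very choice of $\dot{\xi}_*$, and Theorem \ref{cv0} yields a unique $(\dot{p}_+,\dot{\theta}_+,\dot{q}_+)^\top$ together with the estimate \eqref{x20}.

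Third, I would convert \eqref{x20} into the stated estimate \eqref{eq168}. By \eqref{eq099} the left-hand side of \eqref{x20} is exactly $\|\dot{U}_+\|_{(\dot{\Omega}_+;\dot{\Gamma}_s)}$, and (since $\mathcal{F}_2 = \mathcal{F}_3 = \mathcal{F}_4 = 0$) its right-hand side reduces to $\|\mathcal{F}_1\|_{L^\beta(\dot{\Omega}_+)} + \|\dot{\pounds}_1\|_{W_\beta^{1-\frac{1}{\beta}}(\dot{\Gamma}_s)} + \|\dot{\pounds}_2\|_{W_\beta^{1-\frac{1}{\beta}}(\dot{\Gamma}_s)} + \|\dot{P}_e^\sharp\|_{W_\beta^{1-\frac{1}{\beta}}(\Gamma_3)}$. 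From the explicit formula \eqref{q4} one has $H_1(g,\sigma) - H_1(g,0) = O(\sigma)$ with denominators bounded away from zero, so $\|\mathcal{F}_1\|_{L^\beta} \le C\,H_{2_+}(g,\sigma) \le Cg\sigma$. Using Lemma \ref{w0} (which gives $\|\dot{U}_-\|_{\mcc^{2,\alpha}(\Omega)} \le \dot{C}_-\sigma$ and $\|P_I^*\|_{\mcc^{2,\alpha}(\Gamma_1)} \le C\sigma$), the expressions \eqref{sf} for $\dot{\pounds}_1,\dot{\pounds}_2$ — built from $P_I^*$ and $\xi$-integrals (and one $\eta$-derivative) of $\dot{\theta}_-$ over $[0,\dot{\xi}_*]$ — and the expansion \eqref{iniPE} for $\dot{P}_e^\sharp = q_0^2 P_I^* + g\sigma q_0^2 P_E + O(g^2\sigma)+O(\sigma^2)$, together with the continuous embedding of $\mcc^{2,\alpha}$ into $W_\beta^{1-\frac{1}{\beta}}$ on the one-dimensional boundary pieces, I would bound each boundary norm by $C\big(\|P_I^*\|_{\mcc^{2,\alpha}(\Gamma_1)} + \|\dot{P}_e^\sharp\|_{\mcc^{2,\alpha}(\Gamma_3)} + H_{2_+}(g,\sigma)\big) \le \dot{C}_+\sigma$. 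For the shock slope I would use \eqref{eq111}, namely $\dot{\psi}' = (\bar{q}_+\dot{\theta}_+ - \bar{q}_-\dot{\theta}_-)/((1+H_1(g,0))[\bar{p}])$ on $\dot{\Gamma}_s$, where $[\bar{p}] = \bar{p}_-(q_0^2-1)$ is bounded away from zero; the trace of $\dot{\theta}_+\in W_\beta^1(\dot{\Omega}_+)$ on $\dot{\Gamma}_s$ lies in $W_\beta^{1-\frac{1}{\beta}}(\dot{\Gamma}_s)$ with norm $\le C\sigma$, and $\dot{\theta}_-$ contributes $\le C\sigma$ via its $\mcc^{2,\alpha}$ bound, so $\|\dot{\psi}'\|_{W_\beta^{1-\frac{1}{\beta}}(\dot{\Gamma}_s)} \le \dot{C}_+\sigma$. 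Collecting these proves \eqref{eq168}.

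I expect the only real work to be the bookkeeping of the third step; the substance of the lemma has already been done in Lemma \ref{posi}, where the free constant $\dot{\xi}_*$ was fixed precisely to enforce the solvability condition. The mild point requiring care is that the constant $C$ furnished by Theorem \ref{cv0}, which a priori depends on $\xi_0 = \dot{\xi}_*$, can be taken uniform over $\dot{\xi}_* \in (0,\xi_1)$ (the relevant Poincar\'e and elliptic constants stay bounded as $\dot{\xi}_*\to 0^+$, the domains $\dot{\Omega}_+$ increasing to $(0,L)\times(0,1)$), so that $\dot{C}_+$ indeed depends only on $\bar{U}_\pm$, $L$, $P_I$, $P_E$ and $\beta$; and that all the coefficients $\mathcal{A}_i$ meet their sign and regularity requirements uniformly in the small parameters $g,\sigma$.
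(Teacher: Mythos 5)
Your proposal is correct and follows essentially the same route as the paper: the paper likewise invokes Theorem \ref{cv0} with the identifications set up in the preceding subsection (so the solvability condition \eqref{x30} reduces to \eqref{eq117}, which Lemma \ref{posi} enforces by the choice of $\dot{\xi}_*$), then bounds $\dot{\pounds}_1,\dot{\pounds}_2$ and $\dot{P}_e^\sharp$ via Lemmas \ref{w0} and \ref{e1} and controls $\dot{\psi}'$ through \eqref{eq111}. Your additional remarks on checking the sign hypotheses on the $\mathcal{A}_i$ and on the uniformity of the constant in $\dot{\xi}_*$ are sound bookkeeping that the paper leaves implicit.
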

\begin{proof}
   By applying \eqref{x20} in Theorem 3.1, it follows that
  \begin{equation}\label{x32}
  \begin{split}
  &\| \dot{p}_+ \|_{W_\beta^1(\dot{\Omega}_+)} +  \| \dot{\theta}_+\|_{W_\beta^1(\dot{\Omega}_+)} +  \| \dot{q}_+\|_{\mcc^0(\dot{\Omega}_+)}+  \|\dot{q}_+\|_{W_\beta^{1-\frac{1}{\beta}}(\dot{\Gamma}_s)}\\
   \leq& {C}\Big( \sum_{i=1}^{2} \|\dot{\pounds}_i\|_{W_\beta^{1-\frac{1}{\beta}}(\dot{\Gamma}_s)}+ \|\dot{P}_e^\sharp\|_{\mcc^{2,\alpha}(\Gamma_3)} +  H_{2_+}(g, \sigma) \Big).
   \end{split}
   \end{equation}
Moreover, according to $\eqref{eq111}$, one has
\begin{equation}\label{eq205}
   \|\dot{\psi}'\|_{W_\beta^{1-\frac{1}{\beta}}(\dot{\Gamma}_s)}\leq {C} \Big(\|\dot{\theta}_+\|_{W_\beta^{1-\frac{1}{\beta}}(\dot{\Gamma}_s)}
   + \|\dot{\theta}_-\|_{W_\beta^{1-\frac{1}{\beta}}(\dot{\Gamma}_s)}
   \Big).
\end{equation}
Therefore, employing Lemma \ref{w0} and Lemma \ref{e1}, one can obtain  the estimate $\eqref{eq168}$.

\end{proof}

\section{The nonlinear iteration scheme}

\subsection{The supersonic flow $U_-$ in $\Omega$}\

\begin{lem}\label{zc}
Suppose that \eqref{PCI}-\eqref{P1.23} hold, then there exists a positive constant $\sigma_L$ depending on $\bar{U}_-$ and $L$, such that for any $0<\sigma< \sigma_L$, there exists a unique solution $U_-\in {\mcc}^{2,\alpha}(\bar{\Omega})$ to the equations \eqref{eq260000}-\eqref{b1} with the initial-boundary conditions \eqref{eq44}-\eqref{eq45}. Moreover, denote $U_- : = \bar{U}_- + \delta U_-$, then the following estimates hold:
\begin{align}
\|U_- - \bar{U}_- \|_{\mcc^{2,\alpha}(\bar{\Omega})} \leq& C\| P_I^\sharp\|_{\mcc^{2,\alpha}(\Gamma_1)} \leq C_L \sigma,\label{eq174}\\
\|\delta U_- - \dot{U}_- \|_{{\mcc}^{1,\alpha}(\bar{\Omega})} \leq& C_L^{\natural} \sigma^2,\label{eq175}
\end{align}
where the constants $C_L$ and $C_L^{\natural}$ depend on $\bar{U}_-$, $P_I$, and $L$.
\end{lem}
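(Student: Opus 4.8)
The plan is to solve the system \eqref{eq260000}--\eqref{b1} together with the conditions \eqref{eq44}--\eqref{eq45} on the whole rectangle $\Omega$ as a quasilinear hyperbolic initial--boundary value problem, treating it as a small perturbation of the background state $\bar U_-$, and then to compare the perturbation with the linearized solution $\dot U_-$. For the supersonic flow one has $\bar M_-=q_0>1$, so $M>1$ throughout $\bar\Omega$ once $\sigma$ is small, and by the Remark after \eqref{b1} the system \eqref{eq260000}--\eqref{b1} is hyperbolic: the $2\times2$ block \eqref{b2} has real eigenvalues $\lambda_\pm$ with $\lambda_+>0>\lambda_-$ near $\bar U_-$, while \eqref{b1}, rewritten as $\partial_\xi B+g\tan\theta=0$, is a transport equation with vanishing eigenvalue (the streamline field). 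Hence on $\Gamma_1=\{\xi=0\}$ all three characteristic fields enter $\Omega$, matching the three scalar Cauchy conditions $U_-=U_{\mathrm{in}}(Y_0(\eta;g,\sigma))$; on each wall $\Gamma_2,\Gamma_4$ exactly one field is incoming, matched by the single slip condition $\theta_-=0$; and on $\Gamma_3=\{\xi=L\}$ all fields leave, so no condition is imposed there. This is precisely the framework of the theory in \cite{LY}.

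First I would check the corner compatibility at $(0,0)$ and $(0,1)$ up to second order. The zeroth order condition holds because the entrance datum carries zero flow angle, so $\theta_-=0$ at the corners is consistent with the wall condition; the first and second order conditions reduce, after differentiating the equations along the boundary, to the derivative identities recorded in \eqref{q2}, i.e. $\partial_\eta P_I^\sharp(0)=\partial_\eta P_I^\sharp(1)$ equals the prescribed value. Since $P_I\in\mcc^{2,\alpha}(\bar{\mathbb{R}}_+)$ and $g,\sigma$ are small, and $\eta\mapsto Y_0(\eta;g,\sigma)$ is a $\mcc^{2,\alpha}$ change of variable with uniformly bounded derivatives (coming from the given positive entrance data), the boundary datum is a $\mcc^{2,\alpha}$-small perturbation of that of $\bar U_-$. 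Invoking the existence and uniqueness theorem for quasilinear hyperbolic initial--boundary value problems in \cite{LY} then yields, for some $\sigma_L>0$ depending only on $\bar U_-$ and $L$, a unique $U_-\in\mcc^{2,\alpha}(\bar\Omega)$ with $\|U_--\bar U_-\|_{\mcc^{2,\alpha}(\bar\Omega)}\le C\|P_I^\sharp\|_{\mcc^{2,\alpha}(\Gamma_1)}$. Estimate \eqref{eq174} follows from the explicit form \eqref{PIsharp}: the term $\sigma P_I(Y_0)$ contributes $O(\sigma)$ while the other two terms contribute $O(g\sigma)$, because $H_1(g,\sigma)-H_1(g,0)=O(\sigma)$ by \eqref{q4}, so $\|P_I^\sharp\|_{\mcc^{2,\alpha}(\Gamma_1)}\le C_L\sigma$.

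For \eqref{eq175} set $W\defs\delta U_--\dot U_-$ with $\delta U_-\defs U_--\bar U_-$. A direct computation shows that \eqref{eq63}--\eqref{eq65} is, up to terms of size $O(\sigma^2)$, the linearization of \eqref{eq260000}--\eqref{b1} about $\bar U_-$ with $g$ held as a parameter; in particular one uses that $\bar U_-$ solves its own balance laws and that the mismatch between the coefficient $1+H_1(g,\sigma)$ in \eqref{eq260000} and $1+H_1(g,0)$ in \eqref{eq63} is exactly compensated by the right-hand side $H_{2_-}(g,\sigma)$, modulo the term $(H_1(g,\sigma)-H_1(g,0))\partial_\eta\delta p_-=O(\sigma^2)$. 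Subtracting \eqref{eq63}--\eqref{eq65} from the equations satisfied by $\delta U_-$, one finds that $W$ solves a linear hyperbolic initial--boundary value problem on $\Omega$ with the same principal part, coefficients that are $O(g)$-perturbations of constants, homogeneous wall conditions $\theta=0$, Cauchy datum on $\Gamma_1$ equal to $P_I^\sharp-P_I^*=\frac{H_1(g,0)-H_1(g,\sigma)}{1+H_1(g,0)}P_I^\sharp=O(\sigma^2)$ by \eqref{qc2}, and source term equal to the quadratic Taylor remainder plus the coefficient mismatch above, of size $O(\|\delta U_-\|_{\mcc^{1,\alpha}(\bar\Omega)}^2)=O(\sigma^2)$ in $\mcc^{0,\alpha}(\bar\Omega)$ by \eqref{eq174}. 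The linear hyperbolic a priori estimate of \cite{LY}, at the $\mcc^{1,\alpha}$ level (admissible since $U_-\in\mcc^{2,\alpha}$), then gives $\|W\|_{\mcc^{1,\alpha}(\bar\Omega)}\le C_L^\natural\sigma^2$, which is \eqref{eq175}.

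The main obstacle is the bookkeeping in the comparison step: one must verify that every way in which the nonlinear system \eqref{eq260000}--\eqref{b1} differs from its chosen linearization \eqref{eq63}--\eqref{eq65} --- most notably the replacement of $1+H_1(g,\sigma)$ by $1+H_1(g,0)$ in the coefficients together with the source $H_{2_-}(g,\sigma)$, and the rescaling factor relating $P_I^\sharp$ and $P_I^*$ in \eqref{qc2} --- either cancels or is genuinely of size $O(\sigma^2)$ rather than only $O(g\sigma)$, so that the right-hand side of \eqref{eq175} is $\sigma^2$ and not $\sigma g$ (recall $\sigma\le g^3\ll g\sigma$). Here the estimate $H_1(g,\sigma)-H_1(g,0)=O(\sigma)$ and the precise definitions of $H_{2_-}$ and $P_I^*$ are used in an essential way. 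A secondary point requiring care is the verification of the second-order corner compatibility for the quasilinear problem, which is needed for the $\mcc^{2,\alpha}$ (rather than merely piecewise) regularity asserted in \eqref{eq174}.
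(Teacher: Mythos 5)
Your proposal is correct and follows essentially the same route as the paper: existence and the estimate \eqref{eq174} via the quasilinear hyperbolic theory of \cite{LY} (using the compatibility relations \eqref{q2} and the bound $\|P_I^\sharp\|_{\mcc^{2,\alpha}}\le C\sigma$), and then \eqref{eq175} by observing that $\delta U_--\dot U_-$ solves a linear hyperbolic problem whose source (the quadratic Taylor remainder plus the coefficient mismatch $-(H_1(g,\sigma)-H_1(g,0))\partial_\eta\delta U_-$) and boundary datum $P_I^\sharp-P_I^*=\frac{H_1(g,0)-H_1(g,\sigma)}{1+H_1(g,0)}P_I^\sharp$ are both $O(\sigma^2)$. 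This is exactly the decomposition the paper encodes in its error term $F_-(\delta U_-)$ and matrix $B_\sigma$.
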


\begin{proof}
The existence of the unique solution $U_-\in {\mcc}^{2,\alpha}(\bar{\Omega})$ can be obtained by employing the theory in the book \cite{LY}. Thus, it suffices to verify $\eqref{eq175}$.

The equations $\eqref{eq260000}$-\eqref{b1} can be rewritten as the following matrix form:
\begin{align}
&B(g,\sigma) \partial_\eta U + B_1 (U)\partial_\xi U+ g\cdot b(U)  = 0,\label{eq176}
\end{align}
where $U = (p, \theta, q)^\top$, $b(U) = \Big(\displaystyle\frac{\cos\theta}{q}, -\displaystyle\frac{\sin\theta}{\rho q^3}, \tan\theta\Big)^\top,$
\[ B(g,\sigma) = \begin{pmatrix}
   1+H_1(g,\sigma) & 0 & 0  \\
   0 & 1+H_1(g,\sigma) & 0  \\
   0 & 0 & 0
\end{pmatrix}, \, B_1(U)=
\begin{pmatrix}
   -\displaystyle\frac{\sin\theta}{\rho q } & q \cos\theta& 0  \\
   \displaystyle\frac{(M^2 -1)\cos\theta}{\rho^2 q^3}& -\displaystyle\frac{\sin \theta}{\rho q } & 0  \\
   \displaystyle\frac{1}{\rho}& 0 & q
\end{pmatrix}.\]
Therefore, $\delta U_- - \dot{U}_-$ satisfies the following problem:
 \begin{equation}\label{eq179}
 \begin{split}
   &B(g,0) \partial_\eta (\delta U_- - \dot{U}_-) + B_1 (\bar{U}_-)\partial_\xi (\delta U_- -\dot{U}_-) + g\cdot \nabla b(\bar{U}_-)(\delta U_- -\dot{U}_-)\\
   = & F_-(\delta U_-),\quad \text{in}\quad \Omega
   \end{split}
   \end{equation}
with the following initial-boundary conditions
  \begin{align}\label{eq179}
    \delta U_- - \dot{U}_- =&\big(P_I^\sharp(Y_0(\eta;g,\sigma))- {P}_I^*(\eta),0,0\big)^\top
    ,&\quad &\text{on} \quad \Gamma_1\\
    \delta \theta_- - \dot{\theta}_- = &0,&\quad &\text{on} \quad \Gamma_2\cup \Gamma_4
  \end{align}
 where
 \begin{align}
  F_-(\delta U_-):  =&-B_{\sigma} \partial_\eta \delta U_- + \Big( B_1(\bar{U}_-) - B_1({U}_-) \Big)\partial_\xi \delta U_- \notag\\
  &- g\cdot \Big(b(U_-) - b(\bar{U}_-)- \nabla b(\bar{U}_-) \delta U_-\Big),
\end{align}
with
\[ B_{\sigma} = \begin{pmatrix}
   H_1(g,\sigma)-H_1(g,0) & 0 & 0  \\
   0 &  H_1(g,\sigma)-H_1(g,0)  & 0  \\
   0 & 0 & 0
\end{pmatrix}.\]
Then, applying \eqref{eq174}, one can infer that
\begin{align}\label{eq180}
&\|\delta U_- - \dot{U}_- \|_{{\mcc}^{1,\alpha}(\bar{\Omega})}\notag\\
\leq& C \Big(\| F_-(\delta U_-)\|_{\mcc^{1,\alpha}(\bar{\Omega})} + \|P_I^\sharp(Y_0(\eta;g,\sigma))- {P}_I^*(\eta)\|_{\mcc^{2,\alpha}(\Gamma_1)} \Big)\\
 \leq & C\Big(\sigma \| \partial_\eta \delta U_- \|_{\mcc^{1,\alpha}(\bar{\Omega})}+ \| \partial_\xi \delta U_- \|_{\mcc^{1,\alpha}(\bar{\Omega})}\cdot \|\delta U_- \|_{\mcc^{1,\alpha}(\bar{\Omega})}+ g\|\delta U_- \|_{{\mcc}^{1,\alpha}(\bar{\Omega})}^2 + \sigma^2\Big)\notag\\
  \leq & C_L^{\natural} \sigma^2.\notag
\end{align}

\end{proof}

\subsection{The shock front and subsonic flow}
For the given quantities $P_I$, $P_E$ and $U_-$ in Lemma \ref{zc}, one needs to find the shock solution $(U_+;\psi)$. Here, $\psi$ is the shock front:
\begin{align}
  \Gamma_s\defs \{(\xi,\eta): \xi = \psi(\eta): = \dot{\xi}_* + \delta \psi(\eta), 0<\eta<1 \},
\end{align}
which close to the initial approximating shock front $\dot{\Gamma}_{\mr{s}}$.
Correspondingly, the subsonic region is
\begin{align}
   {\Omega}_+ = \{ (\xi, \eta)\in \mathbb{R}^2 : \psi(\eta) < \xi < L, 0 < \eta < 1\},
\end{align}
and the subsonic flow $U_+$ is supposed to be closed to $\bar{U}_+$.

Thus, The solution $U_+$ satisfies the following free boundary value problem
\begin{align}
&B(g,\sigma) \partial_\eta U_+ + B_1 (U_+) \partial_\xi U_+ + g\cdot b(U_+)  = 0,&\quad &\text{in}\quad \Omega_+\label{w1}\\
&\text{The R-H conditions} \quad\eqref{eq30}-\eqref{eq33},&\quad &\text{on}\quad \Gamma_{\mr{s}}\\
&\theta_+ = 0 ,&\quad &\text{on} \quad (\Gamma_{2}\cup\Gamma_{4})\cap\overline{\Omega_+}\\
&p_+ =  P_{\mr{out}}  (Y_L(\eta;g,\sigma);g,\sigma).&\quad  &\text{on}\quad \Gamma_3\label{w2}
\end{align}
The next step is to solve this free boundary value problem near $(\bar{U}_+; \dot{\psi})$. It should be pointed out that the free boundary $\psi$ will be determined by the shape of the shock front $\psi'$ and an exact point $\xi_* : = \psi(1)$ on the nozzle. That is, $\psi(\eta)$ will be rewritten as below:
\begin{equation}\label{bn}
  \psi(\eta ) = \xi_* - \int_{\eta}^1 \delta \psi'(\tau)\dif \tau,
\end{equation}
where $\xi_* := \dot{\xi}_* + \delta \xi_*$, and $\delta \xi_*$ will be determined by the solvability condition for the solution $U_+$.

Then the following transformation will be introduced to fix the
free boundary $\Gamma_s$:
\begin{align*}\label{eq229}
\mathcal{T} : \begin{cases}
\tilde{\xi} = L + \displaystyle\frac{L - \dot{\xi}_*}{L - {\psi}(\eta)}(\xi - L),\\
\tilde{\eta} = \eta,
\end{cases}
\end{align*}
with the inverse
\begin{align*}
\mathcal{T}^{-1} :
 \begin{cases}
\xi = L + \displaystyle\frac{L -  {\psi}(\tilde{\eta})}{L - \dot{\xi}_*}(\tilde{\xi} - L),\\
\eta = \tilde{\eta}.
\end{cases}
\end{align*}
Under this transformation, the domain $\Omega_+$ becomes
 \begin{equation}
   \dot{\Omega}_+ = \{ (\xi, \eta)\in \mathbb{R}^2 : \dot{\xi}_* < \xi < L, 0 < \eta < 1\},
 \end{equation}
 which is exactly the domain of initial approximating subsonic domain.

Let $\tilde{U}(\tilde{\xi}, \tilde{\eta})\defs U_+\circ \mathcal{T}^{-1}(\tilde{\xi}, \tilde{\eta})$. Direct calculations yield that $\tilde{U}$ satisfies the following equation in $\dot{\Omega}_+$:
\begin{equation}
\begin{split}
  &\widetilde{\mathcal{B}}\partial_{\tilde{\xi}}\tilde{U} + B(g,\sigma) \partial_{\tilde{\eta}}\tilde{U} + g\cdot b(\tilde{U}) = 0,\label{w}
\end{split}
\end{equation}
where
\begin{align*}
 \widetilde{\mathcal{B}} \defs \frac{(\tilde{\xi} - L)\cdot \psi'(\tilde{\eta})}{L- \psi (\tilde{\eta})}B(g,\sigma) + \frac{L - \dot{\xi}_*}{L - {\psi}(\tilde{\eta})}B_1(\tilde{U}).
\end{align*}
In addition, $\tilde{U}$ satisfies the following boundary conditions
\begin{align}
&\tilde{\theta} = 0 , &\quad &\text{on} \quad (\Gamma_{2}\cup\Gamma_{4})\cap\overline{\Omega_+}\\
&\tilde{p} = P_{\mr{out}} (Y_L(\tilde{\eta};g,\sigma);g,\sigma).&\quad  &\text{on}\quad \Gamma_3
\end{align}
Moreover, the R-H conditions \eqref{eq30}-\eqref{eq33} become
\begin{align}
  &G_i(\tilde{U}, U_-(\psi',\xi_*))=0, \quad i=1,2,\quad &\text{on}\quad \dot{\Gamma}_{{s}}\\
  &G_3(\tilde{U}, U_-(\psi',\xi_*);\psi)=0, \quad &\text{on}\quad \dot{\Gamma}_{{s}}\label{ww}
\end{align}
where $U_-(\psi',\xi_*)\defs U_-(\psi(\tilde{\eta}),\tilde{\eta})$.

Therefore, the free boundary problem \eqref{w1}-\eqref{w2} can be transformed into the fixed boundary problem \eqref{w}-\eqref{ww}. Then
an iteration scheme will be constructed to prove the existence of the solutions.

For simplicity of the notations, we shall drop `` $ \tilde{} $ '' in the sequel arguments.

\subsection{The linearized problem for the iteration}
This subsection is devoted to describe the linearized problem for the nonlinear iteration scheme, which will be used to prove the existence of solution to the problem \eqref{w}-\eqref{ww} in the next section.

Given approximating states $U=\bar{U}_+  + \delta U$ of the subsonic flow behind the shock front, as well as approximating shape of the shock front $\psi' = \delta\psi'$, then we update a new approximate state ${U}^{*} = \bar{U}_+ + \delta {U}^{*}$ of the subsonic flow and the shape of the shock front ${\psi^*}'= \delta{{\psi}^{*}}'$.
\begin{enumerate}
\item $\delta {U}^{*}:=(\delta p^*, \delta \theta^*, \delta q^*)^\top$ satisfies the following linearized equations in $\dot{\Omega}_+$
\begin{align}
  &\Big(1+H_1(g,0)\Big)\partial_\eta \delta p^* + \bar{q}_+ \partial_\xi \delta \theta^* - \frac{g}{\bar{q}_+^2} \delta q^* = f_1(\delta U, \delta \psi', \delta \xi_{*})+H_{2_+}(g, \sigma),\label{eq190}\\
   &\Big(1+H_1(g,0)\Big)\partial_\eta \delta {\theta}^* - \frac{1}{\bar{\rho}_+\bar{q}_+}\frac{1- {\bar{M}_+^2}}{\bar{\rho}_+{\bar{q}_+^2}}\partial_\xi \delta p^* - \frac{g}{\bar{\rho}_+{\bar{q}_+^3}}\delta {\theta}^* = f_2(\delta U, \delta \psi', \delta \xi_{*}),\label{eq192}\\
 & \partial_\xi\Big (\bar{q}_+\cdot \delta q^* + \frac{1}{\bar{\rho}_+} \delta p^*\Big )+ g\cdot \delta {\theta}^* = \partial_\xi f_3(\delta U) + f_4(\delta U,\delta \psi', \delta \xi_{*} ),\label{eq194}
\end{align}
where
\begin{align*}
  f_1(\delta U, \delta \psi', \delta \xi_{*}) : = &\Big(  \bar{q}_+\partial_\xi \delta \theta - \displaystyle\frac{g}{\bar{q}_+^2}\delta q + \frac{g}{\bar{q}_+}\Big)\\
  &- \Big(- \frac{\sin {\theta}}{{\rho}{q}} \partial_\xi  p+ {q} \cos \theta \partial_\xi \theta + g\displaystyle\frac{\cos\theta}{q} \Big)\\
  &+(1+H_1(g,\sigma)) \frac{ L-{\xi}}{L- \psi ({\eta})}\cdot \delta \psi'({\eta}) \partial_\xi  p \\
  &+\displaystyle\frac{\delta \xi_* - \int_{\eta}^1 \delta \psi'(\tau)\dif \tau}{L -\psi(\eta)} \Big(\frac{\sin \theta}{\rho q}\partial_\xi p- q \cos \theta \partial_\xi \theta\Big)\\
   &-\Big(H_1(g,\sigma) - H_1(g,0) \Big)\partial_\eta \delta p ,\\
   f_2(\delta U, \delta \psi', \delta \xi_{*}): = &\Big(  - \frac{1}{\bar{\rho}_+\bar{q}_+}\frac{1-{\bar{M}_+^2}}{\bar{\rho}_+{\bar{q}_+^2}} \partial_\xi \delta p - \frac{g}{\bar{\rho}_+\bar{q}_+^3} \delta \theta \Big)\\
    & - \Big( - \frac{\sin {\theta}}{{\rho}{q}}\partial_\xi \theta - \frac{\cos {\theta}}{{\rho}{q}}\frac{1-{M}^2}{{\rho}{{q}}^2} \partial_\xi  p -g\frac{\sin\theta}{\rho q^3}\Big)\\
    & + (1+H_1(g,\sigma))\frac{L-\xi}{L- \psi ({\eta})}\cdot \delta \psi'({\eta})\partial_\xi \theta\\
     &+\displaystyle\frac{\delta \xi_* - \int_{\eta}^1 \delta \psi'(\tau)\dif \tau}{L -\psi(\eta)}\Big(\frac{\sin \theta}{\rho q}\partial_\xi\theta +
      \frac{\cos \theta}{\rho q} \frac{1- {{M}}^2}{\rho q^2}\partial_\xi  p \Big)\\
      &- \Big(H_1(g,\sigma) - H_1(g,0) \Big)\partial_\eta \delta \theta ,\\
     f_3(\delta U): =&\Big(\bar{q}_+\cdot \delta q + \frac{1}{\bar{\rho}_+} \delta p\Big) - B(U),\\
     f_4(\delta U,\delta \psi', \delta \xi_{*} )\defs &g \cdot  \displaystyle\frac{\delta \xi_* - \int_{\eta}^1 \delta \psi'(\tau)\dif \tau}{L -\dot{\xi}_*}\tan \theta + g\cdot (\delta \theta - \tan \theta).
  \end{align*}
\item On the nozzle walls,
\begin{align}\label{eq198}
\delta \theta^* = 0 ,\quad \text{on}\quad (\Gamma_{2}\cup\Gamma_{4})\cap\overline{\dot\Omega_+}.
\end{align}
\item On the exit,
\begin{align}\label{eq199}
  &\delta p^* = P_e^*(\eta,\delta U) \defs P_e^\sharp(Y_L(\eta;g,\sigma;\delta U);g,\sigma),&\quad &\text{on}\quad\Gamma_3,
\end{align}
where
$$Y_L(\eta;g,\sigma;\delta U) = \displaystyle\frac{1}{1+H_1(g,\sigma)}\int_0^\eta \displaystyle\frac{1}{(\rho q \cos \theta)(L,s)} \dif s.$$

\item On the free boundary $\dot{\Gamma}_s$,
\begin{align}
&\alpha_{j+}\cdot \delta U^* = G_j^*(\delta U,\delta U_-, \delta \psi', \delta \xi_*),\quad j = 1,2,\label{eq200}\\
& \alpha_{3+} \cdot \delta U^*-(1+H_1(g,0))[\bar{p}] \delta {\psi^*}'=  G_3^*(\delta U,\delta U_-, \delta \psi', \delta \xi_*),\label{G3}
\end{align}
where
\begin{align}
 & G_j^*(\delta U,\delta U_-, \delta \psi', \delta \xi_*)\defs \alpha_{j+}\cdot \delta {U} - G_j(U, U_-(\psi',\xi_*)), \\
  & G_3^*(\delta U,\delta U_-, \delta \psi', \delta \xi_*) \defs \alpha_{3+} \cdot \delta U - (1+H_1(g,0))[\bar{p}]\delta \psi'\notag\\
  &\qquad \qquad \qquad \qquad \qquad \quad- G_3(U, U_-(\psi',\xi_*);\psi').
\end{align}
\end{enumerate}

\begin{rem}
The condition \eqref{eq200} can be rewritten as the following form:
\begin{align}
 A_*(\delta {p}^*,\delta {q}^*)^\top =  (G_1^*, G_2^*)^\top : = \mathbf{G}.
\end{align}
 Applying Lemma \ref{e1}, one has $\det A_* \neq 0$. Thus $(\delta {p}^*,\delta {q}^*)$ can be determined uniquely, that is
\begin{align}
  \delta p^* \defs& \pounds_1^*,\label{eq202}\\
  \delta q^*\defs & \pounds_2^*,\label{eq202+}
\end{align}
with
 \begin{equation}
  (\pounds_1^*, \pounds_2^*)^\top = A_{*}^{-1} \mathbf{G}.
 \end{equation}
Moreover, employing \eqref{G3}, one has
\begin{equation}\label{eq203}
    \delta {\psi^*}' = \displaystyle\frac{  \alpha_{3+} \cdot \delta U^*- G_3^*(\delta U,\delta U_-, \delta \psi', \delta \xi_*)}{(1+H_1(g,0))[\bar{p}]}\defs \pounds_3^*.
\end{equation}
\end{rem}

In order to find the solution $(U, \psi)$, one needs to construct a suitable function space for $(\delta U, \delta \psi')$ such that $\delta\xi_*$ can be determined, and the iteration mapping
\begin{align*}
  \mathbf{\Pi} : (\delta U; \delta \psi') \mapsto (\delta {U}^*; \delta {\psi^*}';\delta \xi_*)
\end{align*}
 is well defined and contractive.

For simplicity of notations, define the solution $(\delta {U}^*; \delta {\psi^*}';\delta \xi_*)$ to the linearized problem \eqref{eq190}-\eqref{G3} near $(\dot{U}_+; \dot{\psi}';0)$ as an operator:
\begin{align}\label{iteration}
  (\delta {U}^*; \delta {\psi^*}';\delta \xi_*) = \mathscr{T}_e(\mathscr{F}; \mathscr{G};  P_{{e}}^*),
\end{align}
where $\mathscr{F}\defs (f_1 + H_{2+}(g,\sigma), f_2, f_3,f_4)$, $\mathscr{G}\defs(\pounds_1^*, \pounds_2^*, \pounds_3^*) $.
In particular,
\begin{align}\label{initial}
   (\dot{U}_+; \dot{\psi}';0) = \mathscr{T}_e(\dot{\mathscr{F}}; \dot{\mathscr{G}}; \dot{P}_e^\sharp),
\end{align}
where $\dot{\mathscr{F}}\defs (H_{2+}(g,\sigma),0,  -B(\bar{U}_+),0)$, $\dot{\mathscr{G}}\defs (\dot{\pounds}_1,\dot{\pounds}_2, \dot{\pounds}_3)$.

When $\delta\xi_*$ is omitted, it will be denoted by
\begin{align}\label{iteration1}
  (\delta {U}^*; \delta {\psi^*}') = \mathscr{T}(\mathscr{F}; \mathscr{G}; P_{{e}}^*),
\end{align}
and
\begin{align}\label{initial1}
   (\dot{U}_+; \dot{\psi}') = \mathscr{T}(\dot{\mathscr{F}}; \dot{\mathscr{G}};\dot{P}_e^\sharp)
\end{align}
respectively.

\subsection{The solvability condition and a prior estimates of $\delta U^*$}\

By applying Theorem \ref{cv0} and taking
\begin{align*}
  &\mathcal{U}\defs \delta{p}^*, \,\mathcal{V} \defs\delta{\theta}^*,\, \mathcal{W} \defs\delta{q}^*, \,\mathcal{F}_1 \defs \displaystyle\frac{f_1 + H_{2+}(g,\sigma)}{ 1+H_1(g,0) },\, \mathcal{F}_2 \defs \displaystyle\frac{f_2}{ 1+H_1(g,0) },\\
  &\mathcal{F}_3 \defs \frac{f_3}{\bar{q}_+},\,\mathcal{F}_4 \defs \frac{f_4}{\bar{q}_+} ,\, \mathcal{U}_s\defs {\pounds}_1^*,\, \mathcal{W}_s\defs {\pounds}_2^*,\,\, \mathcal{U}_3\defs P_e^*,\\
  &\mathcal{A}_1 \defs \displaystyle\frac{\bar{q}_+}{ 1+H_1(g,0) }, \,\,\mathcal{A}_2 \defs \displaystyle\frac{1}{ 1+H_1(g,0) }\frac{g}{\bar{q}_+^2},\, \mathcal{A}_3 \defs \displaystyle\frac{1}{ 1+H_1(g,0) }\frac{1 - \bar{M}_+^2}{\bar{\rho}_+^2 \bar{q}_+^3},\\
  & \mathcal{A}_4 \defs \displaystyle\frac{1}{ 1+H_1(g,0) }\frac{g}{\bar{\rho}_+\bar{q}_+^3},\,
  \mathcal{A}_5 \defs \frac{1}{\bar{\rho}_+\bar{q}_+},\,\mathcal{A}_6 \defs\frac{g}{\bar{q}_+},
\end{align*}
then \eqref{x30} yields that
\begin{equation}\label{eq207}
\begin{aligned}
 \int\int_{\dot{\Omega}_+}   \displaystyle\frac{\mathcal{A}_+ f_2}{ 1+H_1(g,0) } \dif \xi \dif \eta
   = \int_0^{1} \mathcal{A}_+ \mathcal{A}_3 \Big( \pounds_1^* - P_e^*\Big)\dif \eta.
 \end{aligned}
\end{equation}

 Applying the arguments in Theorem 3.1, one can obtain the following estimates for $\delta U^*$ immediately.

\begin{lem}\label{cv}
Suppose that, for given $(\delta U; \delta \psi')$, there exists a $\delta\xi_*$ such that \eqref{eq207} holds. Then there exists a solution $(\delta U^*; \delta {\psi^*})$ to the linearized problem \eqref{eq190}-\eqref{eq194} with the boundary conditions \eqref{eq198}-\eqref{eq199} and \eqref{eq202}-\eqref{eq202+}, and satisfying the following estimates:
\begin{align}\label{eq218}
  &\| \delta p^* \|_{W_\beta^1(\dot{\Omega}_+)}  + \| \delta \theta^* \|_{W_\beta^1(\dot{\Omega}_+)}+\| \delta q^* \|_{\mcc^0(\dot{\Omega}_+)}+ \|\delta q^*\|_{W_\beta^{1-\frac{1}{\beta}}(\dot{\Gamma}_s)} \notag\\
  \leq&  C\Big(\sum_{i=1}^2 \| f_i \|_{L^\beta(\dot{\Omega}_+)} +  \|{f}_3 - {f}_3(\dot{\xi}_*,\eta) \|_{\mcc^0(\dot{\Omega}_+)}+ \| f_4 \|_{\mcc^0(\dot{\Omega}_+)} + H_{2+}(g,\sigma) \\
    & \qquad + \sum_{i=1}^2 \|{\pounds}_i^*\|_{W_\beta^{1-\frac{1}{\beta}}(\dot{\Gamma}_s)} + \| P_e^*\|_{W_\beta^{1-\frac{1}{\beta}}(\Gamma_3)}\Big),\notag\\
  &\| \delta {\psi^*}' \|_{W_\beta^{1-\frac{1}{\beta}}(\dot{\Gamma}_s)}
\leq C\Big(\| \delta U^*\|_{W_\beta^{1-\frac{1}{\beta}}(\dot{\Gamma}_s)}+\| G_3^* \|_{W_\beta^{1-\frac{1}{\beta}}(\dot{\Gamma}_s)} \Big),
\end{align}
where the constant $C$ depends on $\bar{U}_\pm$, $\dot{\xi}_*$, $L$ and $\beta$.
\end{lem}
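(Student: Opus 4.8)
The plan is to recognise the linearised problem \eqref{eq190}--\eqref{eq194} together with the boundary conditions \eqref{eq198}--\eqref{eq199} and \eqref{eq202}--\eqref{eq202+} as a special instance of the abstract boundary value problem \eqref{x}--\eqref{x1} of Theorem \ref{cv0}, and then to read off the conclusion. First I would install the dictionary listed immediately before the statement, setting $\mathcal{U}\defs\delta p^*$, $\mathcal{V}\defs\delta\theta^*$, $\mathcal{W}\defs\delta q^*$, with $\mathcal{F}_1,\dots,\mathcal{F}_4$, $\mathcal{U}_s=\pounds_1^*$, $\mathcal{W}_s=\pounds_2^*$, $\mathcal{U}_3=P_e^*$ and the coefficients $\mathcal{A}_1,\dots,\mathcal{A}_6$ as indicated, the role of $\xi_0$ being played by $\dot\xi_*$. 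One then checks that, since $\bar M_+<1$ (so $1-\bar M_+^2>0$) and $g>0$, all the $\mathcal{A}_i$ are positive and $\inf_{\eta\in(0,1)}\mathcal{A}_3(\eta)>0$, and that for $g$ small the factor $1+H_1(g,0)$ is bounded above and below away from zero, so that $\mathcal{A}_1,\mathcal{A}_2,\mathcal{A}_6$ are honest positive constants while $\mathcal{A}_3,\mathcal{A}_4,\mathcal{A}_5$ are smooth positive functions of $\eta$ alone. One also records the regularity of the data: from the definitions of $f_1,\dots,f_4$, of $\pounds_1^*,\pounds_2^*$ (via $A_*^{-1}\mathbf{G}$ and Lemma \ref{e1}) and of $P_e^*$, together with the regularity of $(\delta U;\delta\psi')$ built into the iteration space, one gets $f_1,f_2\in L^\beta(\dot\Omega_+)$, $f_3,f_4\in\mcc^0(\dot\Omega_+)$, $\pounds_1^*,\pounds_2^*\in W_\beta^{1-\frac1\beta}(\dot\Gamma_s)$ and $P_e^*\in W_\beta^{1-\frac1\beta}(\Gamma_3)$, so that Theorem \ref{cv0} is applicable.

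Under this dictionary the abstract solvability condition \eqref{x30} becomes precisely \eqref{eq207}, which is the hypothesis of the lemma; hence Theorem \ref{cv0} produces the (unique) solution $(\delta p^*,\delta\theta^*,\delta q^*)^\top$. Substituting the chosen $\mathcal{F}_i$ into the estimate \eqref{x20} --- noting in particular that $\mathcal{F}_3-\mathcal{F}_3(\xi_0,\eta)=\bar q_+^{-1}\bigl(f_3-f_3(\dot\xi_*,\eta)\bigr)$, and that the constant term in $\mathcal{F}_1$ contributes the $H_{2+}(g,\sigma)$-term after absorbing the bounded factors $(1+H_1(g,0))^{-1}$ and $\bar q_+^{-1}$ into the constant --- yields exactly the first estimate in \eqref{eq218}.

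For the shock profile I would invoke the explicit formula \eqref{eq203}, namely $\delta{\psi^*}'=\pounds_3^*=\bigl((1+H_1(g,0))[\bar p]\bigr)^{-1}\bigl(\alpha_{3+}\cdot\delta U^*-G_3^*\bigr)$. Taking the $W_\beta^{1-\frac1\beta}(\dot\Gamma_s)$ norm, using that $[\bar p]\neq0$ and that $1+H_1(g,0)$ is bounded below, that $\alpha_{3+}=(0,\bar q_+,0)^\top$ is a constant vector so $\alpha_{3+}\cdot\delta U^*=\bar q_+\,\delta\theta^*$, and that the trace of $\delta\theta^*$ on $\dot\Gamma_s$ is controlled by $\|\delta\theta^*\|_{W_\beta^1(\dot\Omega_+)}$ already estimated above, one obtains the second estimate in \eqref{eq218}; the dependence of $C$ on $\bar U_\pm$, $\dot\xi_*$, $L$ and $\beta$ is inherited from Theorem \ref{cv0} together with the bounds on the coefficients $\mathcal{A}_i$.

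I do not anticipate a genuine obstacle: the statement is a straightforward reduction to Theorem \ref{cv0}, and the only points that need care are (i) the positivity and $g$-uniform boundedness of the coefficients $\mathcal{A}_i$ for small $g$, and (ii) checking that $f_i$, $\pounds_i^*$ and $P_e^*$ lie in the function spaces required by Theorem \ref{cv0} --- both routine given the definitions and the regularity carried by $(\delta U;\delta\psi')$. The one place that genuinely demands attention is the bookkeeping of constants, so that $C$ is uniform in $g$ and $\sigma$ as $g,\sigma\to0$, which is exactly what the subsequent contraction argument will rely on.
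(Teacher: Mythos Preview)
Your proposal is correct and follows essentially the same approach as the paper. The paper's own justification is in fact just the one-line remark ``Applying the arguments in Theorem 3.1, one can obtain the following estimates for $\delta U^*$ immediately'' together with the dictionary you reproduce; you have simply spelled out the verifications (positivity and boundedness of the $\mathcal{A}_i$, regularity of the data, and the derivation of the $\delta{\psi^*}'$ estimate from \eqref{eq203}) that the paper leaves implicit.
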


\section{Well-posedness and contractiveness of the iteration scheme}
In order to carry out the iteration scheme, one needs to find a suitable function space for $(\delta U, \delta \psi')$ such that $\delta\xi_*$ can be determined, and the iteration mapping $\mathbf{\Pi}$ is well defined and contractive.

Let $\epsilon > 0$. Define the function space
\begin{equation}
  \textsl{N} (\epsilon): = \Big\{(\delta U, \delta \psi'): \|\delta U\|_{(\dot{\Omega}_+;\dot{\Gamma}_s)} +  \| \delta \psi' \|_{W_\beta^{1-\frac{1}{\beta}}(\dot{\Gamma}_s)} \leq \epsilon\Big\}.
\end{equation}
To begin with, one needs to show that there exists a $\delta \xi_*$ such that the solvability condition $\eqref{eq207}$ holds. In fact, the following lemma holds.
 \begin{lem}\label{vb}
  Let $0<\sigma\leq g^3$, and $(\delta U - \dot{U}_+;  \delta \psi' - \dot{\psi}')\in \textsl{N}(\frac12 \sigma g^{\frac{3}{2}})$, then there exists a solution $\delta \xi_{*}$ to the equation $\eqref{eq207}$ with the estimate:
\begin{equation}\label{eq100000}
  |\delta \xi_{*}|\leq C_s\frac{\sigma}{g},
\end{equation}
where the constant $C_s$ depends on $p_0, q_0$, $P_I$, $P_E$ and $\|{\theta}^\aleph\|_{\mathcal{C}^0((0,\dot{\xi}_*)\times (0,1))}$.
 \end{lem}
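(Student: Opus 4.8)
The idea is to read the solvability identity \eqref{eq207} as a single scalar equation $\mathcal{I}(\delta\xi_*)=0$ for the one unknown $\delta\xi_*$, with $(\delta U,\delta\psi')$ regarded as fixed data in the prescribed ball, where
\[
\mathcal{I}(\delta\xi_*)\defs\int\int_{\dot{\Omega}_+}\frac{\mathcal{A}_+ f_2}{1+H_1(g,0)}\dif \xi \dif \eta-\int_0^1\mathcal{A}_+\mathcal{A}_3\big(\pounds_1^*-P_e^*\big)\dif \eta ,
\]
and the dependence on $\delta\xi_*$ enters through $\psi(\eta)=\dot\xi_*+\delta\xi_*-\int_\eta^1\delta\psi'(\tau)\dif\tau$ (explicitly in $f_2$, and through $U_-(\psi(\eta),\eta)$ in $\pounds_1^*=\big(A_*^{-1}(G_1^*,G_2^*)\big)_1$). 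Since $\mathcal{I}$ is continuous (indeed smooth) in $\delta\xi_*$ on any bounded interval, the task reduces to locating a zero of $\mathcal{I}$ inside $[-C_s\sigma/g,\,C_s\sigma/g]$.

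\textbf{Step 1: $\mathcal{I}(0)$ is of order $\sigma^2$.} I would first evaluate $\mathcal{I}$ at $\delta\xi_*=0$ and at the initial configuration $(\delta U,\delta\psi')=(\dot U_+,\dot\psi')$. There $f_2\equiv0$ by \eqref{initial}; moreover, using that $\bar\theta_+=0$ and $G_j(\bar U_+,\bar U_-)=0$ one gets $G_j^*=-\alpha_{j-}\cdot\delta U_-(\psi(\eta),\eta)+O(\sigma^2)$, and since $\delta U_--\dot U_-=O(\sigma^2)$ by Lemma \ref{zc} while $\psi(\eta)-\dot\xi_*=-\int_\eta^1\dot\psi'=O(\sigma)$ is multiplied by $\partial_\xi\dot U_-=O(\sigma)$, the data $\pounds_1^*$ and $P_e^*$ differ from $\dot\pounds_1(\dot\xi_*,\cdot)$ and $\dot P_e^\sharp$ by $O(\sigma^2)$; finally $\int_0^1\mathcal{A}_+\mathcal{A}_3(\dot\pounds_1(\dot\xi_*,\cdot)-\dot P_e^\sharp)\dif\eta=0$ is exactly the identity \eqref{eq117} used in Lemma \ref{posi} to fix $\dot\xi_*$. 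Hence $\mathcal{I}(0;\dot U_+,\dot\psi')=O(\sigma^2)$. Then I would check that $(\delta U,\delta\psi')\mapsto f_2,\ \pounds_1^*,\ P_e^*$ are Lipschitz near $(\dot U_+,\dot\psi')$ with constant $O(\sigma)$ — every term in $f_2$ and in $\mathbf{G}$ is either a genuine quadratic remainder of the nonlinear operator and of the R--H map (here $\bar\theta_+=0$ and $\partial_\xi\bar U_+=0$ in the straightened variables are used), or carries a prefactor $H_1(g,\sigma)-H_1(g,0)=O(\sigma)$, $\delta\psi'=O(\sigma)$, or $\delta\xi_*-\int_\eta^1\delta\psi'$. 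Since $(\delta U-\dot U_+,\delta\psi'-\dot\psi')\in\textsl{N}(\frac12\sigma g^{\frac{3}{2}})$, this yields $|\mathcal{I}(0)|\le C\sigma^2+C\sigma\cdot\sigma g^{3/2}\le C\sigma^2$.

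\textbf{Step 2: nondegeneracy of $\mathcal{I}'$.} This is the crux. I would show that $\partial_{\delta\xi_*}\mathcal{I}=\mathcal{R}'(\dot\xi_*+\delta\xi_*)+o(g\sigma)$ uniformly for $\delta\xi_*\in[-C_s\sigma/g,C_s\sigma/g]$ and $(\delta U,\delta\psi')$ in the ball, so that by the sign-preserving bound $\dot\theta_-^{(1)}\ge C_0\sigma>0$ on $(0,\xi_1)\times(0,1)$ from \eqref{sign}, $\partial_{\delta\xi_*}\mathcal{I}$ has a fixed sign and $|\partial_{\delta\xi_*}\mathcal{I}|\ge c\,g\sigma$. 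The $\delta\xi_*$-dependence through $\pounds_1^*$ reproduces, up to lower order, the computation of $\mathcal{R}'(\dot\xi_*+\delta\xi_*)$ in the proof of Lemma \ref{posi} (since $\partial_{\delta\xi_*}G_j^*=-\alpha_{j-}\cdot\partial_\xi\delta U_-(\psi(\eta),\eta)+O(\sigma^2)$ and $\partial_\xi\delta U_-=\partial_\xi\dot U_-+O(\sigma^2)$). The remaining dependence — in $f_2$ via $\frac{\delta\xi_*-\int_\eta^1\delta\psi'}{L-\psi(\eta)}$ and in $\mathcal{A}_+\mathcal{A}_3$ via $\psi(\eta)$ — must be shown to be $o(g\sigma)$: in $\int\int_{\dot\Omega_+}$ the coefficient multiplying $\partial_\xi p$ is $\xi$-independent to leading order, so integrating by parts in $\xi$ turns $\int\partial_\xi p\,\dif\xi$ into the boundary data $P_e^*-\pounds_1^*$ plus an $O(\sigma^2)$ commutator, and the resulting $\eta$-integrals are against smooth weights with $\theta=0$ on $\Gamma_2\cup\Gamma_4$, so a further integration by parts in $\eta$ brings out $\partial_\eta$ of the background quantities $\mathcal{A}_+,\bar\rho_\pm,\bar q_\pm$, each of which is $O(g)$ (cf.\ \eqref{rh-}); the terms carrying $\partial_\xi\theta$ are already $O(\sigma^2)$. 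Combined with $\partial_\xi\dot U_-=O(\sigma)$, $H_1(g,\sigma)-H_1(g,0)=O(\sigma)$ and $\sigma\le g^3$ (which makes $\sigma^2$, $\sigma^2/g$ and $g^2\sigma$ all $\ll g\sigma$), this yields the claimed estimate for $\partial_{\delta\xi_*}\mathcal{I}$.

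\textbf{Step 3: conclusion.} With $|\mathcal{I}(0)|\le C\sigma^2$ and $\mathcal{I}$ strictly monotone with $|\mathcal{I}'|\ge c\,g\sigma$ on $[-C_s\sigma/g,\,C_s\sigma/g]$, picking $C_s$ so that $c\,C_s>C$ makes $\mathcal{I}(-C_s\sigma/g)$ and $\mathcal{I}(C_s\sigma/g)$ of opposite sign; the intermediate value theorem (equivalently, a quantitative implicit function theorem around the almost-solution $(\dot\xi_*,\dot U_+,\dot\psi')$) then produces $\delta\xi_*$ with $\mathcal{I}(\delta\xi_*)=0$ and $|\delta\xi_*|\le|\mathcal{I}(0)|/(c\,g\sigma)\le C_s\sigma/g$, which is \eqref{eq100000}. \textbf{The main obstacle} is precisely Step 2: showing that coupling the hyperbolic terms ($f_2$ and the $\delta\xi_*,\delta\psi'$ contributions) with the elliptic part does not spoil the $O(g\sigma)$ leading behaviour inherited from Lemma \ref{posi} — the gain of one factor of $g$ from integrating by parts in $\eta$ against coefficients whose $\eta$-derivatives are $O(g)$, used together with the wall condition $\theta=0$ on $\Gamma_2\cup\Gamma_4$ and the scaling $\sigma\le g^3$, is what makes this go through.
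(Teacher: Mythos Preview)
Your overall architecture matches the paper's: write the solvability identity as a scalar function of $\delta\xi_*$, show it is $O(\sigma^2)$ at $\delta\xi_*=0$, show its $\delta\xi_*$-derivative is of exact order $g\sigma$, and conclude by the implicit function theorem. Steps 1 and 3 are fine. The gap is in Step~2, precisely at the point you flag as the main obstacle.

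After differentiating the $f_2$-integral in $\delta\xi_*$, the dangerous contribution is
\[
\int\!\!\int_{\dot\Omega_+}\frac{\mathcal{A}_+}{1+H_1(g,0)}\cdot\frac{1}{L-\psi(\eta)}\cdot\frac{\cos\theta}{\rho q}\frac{1-M^2}{\rho q^2}\,\partial_\xi\delta p\,\dif\xi\dif\eta,
\]
which is a priori only $O(\sigma)$. Your proposed cure --- integrate by parts in $\xi$ to obtain a boundary term, then integrate by parts in $\eta$ and gain a factor of $g$ from $\partial_\eta$ of background quantities --- would at best give $O(g\sigma)$, which is the \emph{same} order as the leading term $\mathcal{R}'(\dot\xi_*)$ and therefore not $o(g\sigma)$. (Also, after the $\xi$-integration the resulting $\eta$-integral involves $\delta p$, not $\theta$, so the wall condition $\theta=0$ does not help directly, and there is no natural antiderivative in $\eta$.) A second minor slip: $\delta p(L,\cdot)$ and $\delta p(\dot\xi_*,\cdot)$ are not $P_e^*$ and $\pounds_1^*$ --- those are the prescribed data for $\delta p^*$, not for the given $\delta p$.

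What actually kills this term is an \emph{exact} cancellation at leading order, not a gain of $g$. The paper replaces $\partial_\xi\delta p$ by $\partial_\xi\dot p_+$ (error $O(\sigma g^{3/2})$ since $\delta U-\dot U_+\in\textsl{N}(\tfrac12\sigma g^{3/2})$), reduces the weight to $\mathcal{A}_+\mathcal{A}_3$ (error $O(\sigma^2)$), and then uses the linearized equation \eqref{eq67} for $\dot U_+$ in the divergence form $\partial_\xi(\mathcal{A}_+\mathcal{A}_3\dot p_+)=\partial_\eta(\mathcal{A}_+\dot\theta_+)$ together with $\dot\theta_+=0$ on $\Gamma_2\cup\Gamma_4$ to get
\[
\int\!\!\int_{\dot\Omega_+}\mathcal{A}_+\mathcal{A}_3\,\partial_\xi\dot p_+\,\dif\xi\dif\eta
=\int\!\!\int_{\dot\Omega_+}\partial_\eta(\mathcal{A}_+\dot\theta_+)\,\dif\xi\dif\eta=0.
\]
Equivalently --- and closer to your own route --- after your integration by parts in $\xi$ the leading boundary integral is $\int_0^1\mathcal{A}_+\mathcal{A}_3(\dot P_e^\sharp-\dot\pounds_1(\dot\xi_*,\cdot))\dif\eta$, which vanishes \emph{exactly} by the defining identity \eqref{eq117} for $\dot\xi_*$. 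Either way the $f_2$-contribution to $\partial_{\delta\xi_*}\mathcal I$ is $O(\sigma g^{3/2})+O(\sigma^2)=o(g\sigma)$, and then your Step~3 goes through. Replace your ``gain a factor of $g$'' argument by this cancellation and the proof is complete.
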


\begin{proof}
%Reference the calculations by Fang-Xin, we can define
Define
\begin{equation}\label{eq208}
\begin{split}
&I(\delta \xi_*, \delta \psi', \delta {U}; \pounds_1^*(\delta U_-))\\
=& - \int\int_{\dot{\Omega}_+} \displaystyle\frac{\mathcal{A}_+   f_2}{ 1+H_1(g,0) } \dif \xi \dif \eta
 +\int_0^{1} \mathcal{A}_+ \mathcal{A}_3 \Big( \pounds_1^* - P_e^*\Big)\dif \eta.
 \end{split}
\end{equation}
It is easy to verify that
\begin{equation}
 I(0, 0, 0;  \dot{\pounds}_1(\dot{U}_-)) = 0.
\end{equation}
We first claim that when $0<\sigma\leq g^3$ and $(\delta U - \dot{U}_+;  \delta \psi' - \dot{\psi}')\in \textsl{N}(\frac12 \sigma g^{\frac{3}{2}})$, one has
\begin{align}
  \frac{\partial I}{\partial(\delta \xi_*)}(0, 0, 0; \dot{\pounds}_1(\dot{U}_-)) \neq 0.
\end{align}
 Then by employing the implicit function theorem, there exists a $\delta \xi_{*}$ to the equation \eqref{eq208}.

To prove this, we show the expansion of $I$ near the state $(0, 0, 0; \dot{\pounds}_1(\dot{U}_-))$ and analyse each term of $I(\delta \xi_*, \delta \psi', \delta {U};  \pounds_1^*(\delta U_-))$.

Since $(\delta U - \dot{U}_+;  \delta \psi' - \dot{\psi}')\in \textsl{N}(\frac12 \sigma g^{\frac{3}{2}})$, by applying Lemma \ref{lem5}, it is easy to see that
\begin{align}\label{kk2}
  \|\delta U\|_{(\dot{\Omega}_+;\dot{\Gamma}_{\mr{s}})} +  \| \delta\psi'\|_{W_\beta^{1-\frac{1}{\beta}}(\dot{\Gamma}_s)}\leq C_* \sigma,
\end{align}
where the constant $C_*$ depends on $\dot{C}_+$.

Now, we consider the first term on the right hand side of \eqref{eq208}. After a routine calculation, one has
\begin{equation}\label{itepo}
\begin{split}
   &-\int\int_{\dot{\Omega}_+} \displaystyle\frac{\mathcal{A}_+   f_2}{ 1+H_1(g,0) } \dif\xi \dif\eta\\
 =& -\int\int_{\dot{\Omega}_+} \displaystyle\frac{\mathcal{A}_+}{ 1+H_1(g,0) }\displaystyle\frac{\delta \xi_*}{L-\psi}\Big(\displaystyle\frac{\cos\theta}{\rho q}\displaystyle\frac{1 - M^2}{\rho q^2}\partial_\xi \delta p\Big)\dif\xi\dif\eta\\
 & + O(1)\sigma^2\cdot \delta\xi_*+ O(1)\sigma^2,
  \end{split}
  \end{equation}
 where $O(1)$ depends only on $C_*$, $\bar{U}_+$ and $\dot{\xi}_*$.
Notice that
\begin{align*}
    &\displaystyle\frac{\delta \xi_*}{L -\psi}\displaystyle\frac{\cos\theta}{\rho q}\displaystyle\frac{1 - M^2}{\rho q^2}\partial_\xi \delta p
    \notag\\
   = & \displaystyle\frac{\delta \xi_*}{L -\psi}\Big(\displaystyle\frac{\cos\theta}{\rho q}\displaystyle\frac{1 - M^2}{\rho q^2} - \displaystyle\frac{1}{\bar{\rho}_+ \bar{q}_+}\cdot\displaystyle\frac{1-\bar{M}_+^2}{\bar{\rho}_+ \bar{q}_+^2}\Big)\partial_\xi \delta p
     \notag\\
    & + \displaystyle\frac{\delta \xi_*}{L -\psi} \displaystyle\frac{1}{\bar{\rho}_+ \bar{q}_+}\cdot\displaystyle\frac{1-\bar{M}_+^2}{\bar{\rho}_+ \bar{q}_+^2}\partial_\xi (\delta p - \dot{p}_+)
    \notag\\
    &- \displaystyle\frac{\delta \xi_*\cdot \int_{\eta}^{1}\delta \psi'(\tau)\dif \tau}{(L -\psi)(L - \xi_*)} \cdot \displaystyle\frac{1}{\bar{\rho}_+ \bar{q}_+}\cdot\displaystyle\frac{1-\bar{M}_+^2}{\bar{\rho}_+ \bar{q}_+^2}\partial_\xi \dot{p}_+
    + \displaystyle\frac{\delta \xi_*}{L -\xi_*} \displaystyle\frac{1}{\bar{\rho}_+ \bar{q}_+}\cdot\displaystyle\frac{1-\bar{M}_+^2}{\bar{\rho}_+ \bar{q}_+^2}\partial_\xi \dot{p}_+\notag\\
    =&  \displaystyle\frac{\delta \xi_*}{L -\xi_*} \displaystyle\frac{1}{\bar{\rho}_+ \bar{q}_+}\cdot\displaystyle\frac{1-\bar{M}_+^2}{\bar{\rho}_+ \bar{q}_+^2}\partial_\xi \dot{p}_+ + O(1)\sigma g^{\frac{3}{2}}\cdot  \delta\xi_* + O(1)\sigma^2 \cdot \delta\xi_*,
  \end{align*}
   where $O(1)$ depends only on $C_*$, $\bar{U}_+$ and $\dot{\xi}_*$.
Then, by applying \eqref{eq67}, it follows that
\begin{equation}\label{eq210}
\begin{split}
  &\int\int_{\dot{\Omega}_+} \displaystyle\frac{\mathcal{A}_+}{ 1+H_1(g,0) } \displaystyle\frac{1-\bar{M}_+^2}{\bar{\rho}_+^2\bar{q}_+^3}\partial_\xi\dot{p}_+ \dif \xi \dif \eta= \int\int_{\dot{\Omega}_+} \partial_\xi(\mathcal{A}_+ \mathcal{A}_3\dot{p}_+)\dif \xi \dif \eta\\
   = &\int\int_{\dot{\Omega}_+}\partial_\eta(\mathcal{A}_+ \dot{\theta}_+)\dif \xi \dif \eta = \int_{\dot{\xi}_*}^{L}\left(\mathcal{A}_+\dot{\theta}_+(\xi,1) - \mathcal{A}_+\dot{\theta}_+(\xi, 0)\right)\dif \xi =0.
  \end{split}
  \end{equation}
Therefore, \eqref{itepo} implies that
\begin{equation}\label{f22}
 - \int\int_{\dot{\Omega}_+}  \displaystyle\frac{\mathcal{A}_+f_2}{ 1+H_1(g,0) } \dif \xi \dif \eta = O(1)\sigma g^{\frac{3}{2}}\cdot  \delta\xi_* + O(1)\sigma^2 \cdot \delta\xi_* + O(1)\sigma^2,
\end{equation}
 where $O(1)$ depends only on $C_*$, $\bar{U}_+$ and $\dot{\xi}_*$.

 Then we estimate $\pounds_1^*$. Recalling \eqref{eq200}, for $j=1,2$, one has
\begin{equation}\label{eq0104}
\begin{split}
  G_j^* = &\alpha_{j+}\cdot \delta U - G_j \Big(U, U_-(\delta \psi' , \xi_*)\Big)\\
   =& \Big(\alpha_{j+} \cdot \delta U +\alpha_{j-} \cdot \delta U_-(\delta \psi',\xi_*) - G_j (U, U_-(\delta \psi' , \xi_*))\Big)\\
   &-\alpha_{j-} \cdot \Big(\delta U_-(\delta \psi',\xi_*) - \dot{U}_-({\xi}_*,\eta)\Big)- \alpha_{j-} \cdot \dot{U}_-({\xi}_*,\eta).
\end{split}
\end{equation}
Notice that
\begin{equation}
\begin{split}
  &\alpha_{j+}\cdot \delta U +\alpha_{j-}\cdot \delta U_-(\delta \psi',\xi_*) - G_j (U, U_-(\delta \psi' , \xi_*))\\
   = &\frac{1}{2} \int_{0}^{1} D^2 G_j (\bar{U}_+ + t \delta U, \bar{U}_- + t\delta U_-)\dif t \cdot(\delta U, \delta U_-)^2\\
   =& O(1)\sigma^2,
   \end{split}
  \end{equation}
  where $O(1)$ depends on $C_*$ and $C_L$. In addition, by employing Lemma \ref{w0} and Lemma \ref{zc}, it holds that
  \begin{equation}
  \begin{split}
   &\delta U_-(\delta \psi',\xi_*) - \dot{U}_-({\xi}_*,\eta)\\
    =& \Big(\delta U_-(\xi_* - \int_{\eta}^{1}\delta \psi'(\tau)\dif \tau,\eta  )- \dot{U}_-(\xi_* - \int_{\eta}^{1}\delta \psi'(\tau)\dif \tau,\eta)\Big)\\
    &+
     \Big(\dot{U}_-(\xi_* - \int_{\eta}^{1}\delta \psi'(\tau)\dif \tau
     ,\eta) - \dot{U}_-({\xi}_*,\eta)\Big)\\
      =& O(1) \sigma^2 ,
      \end{split}
      \end{equation}
  where $O(1)$ depends on $C_L^{\natural}$, $\dot{C}_-$ and $C_*$.
Therefore, \eqref{eq0104} yields that
\begin{align}
  G_j^* = - \alpha_{j-} \cdot \dot{U}_-(\dot{\xi}_*+ \delta \xi_*,\eta) +  O(1) \sigma^2.
\end{align}
Thus, one has
\begin{equation}\label{eq0105}
  \pounds_j^* = \dot{\pounds}_j (\dot{\xi}_*+ \delta \xi_*,\eta) + O(1)\sigma^2.
\end{equation}
Moreover, by applying \eqref{kk2} and \eqref{PE}, it holds that
\begin{equation}\label{q1}
\begin{split}
    P_e^*
    =& q_0^2\cdot\Big(P_I^\sharp( {Y}_0(\eta;g,\sigma)) - P_I^\sharp( {Y}_0(\eta;g,0))\Big)\\
    & + g\cdot \Big(\int_{0}^{Y_L(\eta;g,\sigma)}P_e(\tau;g,\sigma)\dif \tau  - \int_{0}^{Y_L(\eta;g,0)}P_e(t;g,\sigma)\dif t  \Big)\\
    & - g\sigma\cdot q_0^2 \Big( \int_{0}^{Y_0(\eta;g,\sigma)}P_I(\tau)\dif \tau  - \int_{0}^{Y_0(\eta;g,0)}P_I(t)\dif t  \Big)\\
    & + g\sigma\cdot q_0^2 \Big( P_E(Y_L(\eta;g,\sigma))- P_E(Y_L(\eta;g,0)) \Big)\\
   &+ \dot{P}_e^{\sharp}(\eta)\\
    = & \dot{P}_e^{\sharp}(\eta) + O(1)\sigma^2,
\end{split}
\end{equation}
where $O(1)$ depends on $p_0, q_0$, $P_I$, $P_E$, $P_I^{'}$, $P_E^{'}$ and $C_*$.

Furthermore, by applying \eqref{f22}, \eqref{q1} and Lemma \ref{posi},
then \eqref{eq208} implies that
\begin{align}\label{eq211}
    &I(\delta \xi_*, \delta \psi', \delta {U};\pounds_1^*(\delta U_-))\notag\\
    =& \int_0^{1}\mathcal{A}_+ \mathcal{A}_3\Big(\dot{\pounds}_1(\dot{\xi}_* + \delta\xi_* , \eta)- \dot{P}_e^{\sharp}(\eta)\Big)\dif \eta
    + O(1)\sigma g^{\frac{3}{2}}\cdot \delta\xi_* + O(1)\sigma^2\cdot\delta\xi_* +  O(1)\sigma^2\notag\\
    =&\frac{1}{1+H_1(g,0)}\big(\mathcal{R}(\dot{\xi}_* + \delta \xi_*) -\mathcal{P}\big) + O(1)\sigma g^{\frac{3}{2}}\cdot \delta\xi_* + O(1)\sigma^2 \cdot\delta\xi_* +  O(1)\sigma^2\notag\\
    =&\Big(g\cdot K \int_0^1 \Big(\displaystyle\frac{\bar{\rho}_-}{p_0}\Big)^{q_0^2-1}
  \dot{\theta}_-(\dot{\xi}_*,\eta)\dif \eta\Big)\cdot\delta \xi_* + O(1)g\sigma \cdot (\delta\xi_*)^2 + O(1)\sigma g^{\frac{3}{2}} \cdot \delta\xi_* \notag\\
  &+ O(1)\sigma^2 \cdot \delta\xi_*+ O(1)g^2\sigma  \cdot \delta\xi_*  +  O(1)\sigma^2\\
    =&  \Big(g\cdot{K} \int_0^{1} \dot{\theta}_-^{(1)}(\dot{\xi}_*,\eta)\dif \eta \Big)\cdot \delta \xi_* + O(1)g\sigma \cdot (\delta\xi_*)^2 + O(1)g^2\sigma  \cdot \delta\xi_* \notag\\
    &+ O(1)\sigma g^{\frac{3}{2}} \cdot \delta\xi_* + O(1)\sigma^2 \cdot \delta\xi_*+  O(1) \sigma^2,\notag
  \end{align}
which indicates that
\begin{equation}\label{eq214}
\begin{split}
  \displaystyle\frac{\partial I}{\partial \delta \xi_*}(0,0,0;\dot{\pounds}_1(\dot{U}_-)) =& g\cdot  K \int_{0}^{1} \dot{\theta}_-^{(1)}(\dot{\xi}_*,\eta)\dif \eta \\
  &+ O(1)\sigma g^2 + O(1)\sigma g^{\frac{3}{2}} + O(1)\sigma^2,
\end{split}
\end{equation}
where $O(1)$ depends on $p_0, q_0$, $P_I$, $P_E$, $P_I^{'}$, $P_E^{'}$, $C_*$, $C_L^{\natural}$, $C_L$, $\dot{C}_\pm$ and $\dot{\xi}_*$.
Applying $ 0< \sigma\leq g^3$ and $\dot{\theta}_-^{(1)}(\dot{\xi}_*,\eta) \geq  C_0 \sigma$, for sufficiently small $g$, one can deduce that
\begin{equation}\label{eq217}
  \displaystyle\frac{\partial I}{\partial \delta \xi_*}(0, 0,0;\dot{\pounds}_1(\dot{U}_-)) \neq 0.
\end{equation}
Moreover, the expansion $\eqref{eq211}$ implies the estimate $\eqref{eq100000}$.

\end{proof}

Lemma \ref{cv} and Lemma \ref{vb} yield that the existence of the subsonic solution $(\delta U^*; \delta \psi^{*'}; \delta \xi_*)$ to the linearized problem \eqref{eq190}-\eqref{G3} as $(\delta U - \dot{U}_+; \delta \psi'- \dot{\psi}')$ in the function space $\textsl{N}(\frac12 \sigma g^{\frac{3}{2}})$ with the sufficiently small $g$ and $\sigma$. In the sequel arguments, we will prove $(\delta U^*; \delta \psi^*)$ also satisfies $(\delta U^* - \dot{U}_+; \delta \psi^{*'}- \dot{\psi}')\in \textsl{N}(\frac12  \sigma g^{\frac{3}{2}})$ as long as $g$ and $\sigma$ are sufficiently small.
Define
\begin{align*}
  \mathscr{N}(\dot{U}_+; \dot{\psi}')\defs \Big\{(\delta U; \delta \psi'): (\delta U - \dot{U}_+;  \delta \psi' - \dot{\psi}')\in \textsl{N}\Big(\frac12  \sigma g^{\frac{3}{2}}\Big)\Big\}.
\end{align*}
Then the following lemma holds.
\begin{lem}\label{vn}
Let $0<\sigma \leq g^3$,
if $(\delta U ;  \delta \psi' )\in \mathscr{N}(\dot{U}_+; \dot{\psi}')$, then there exists a solution $(\delta {U}^*; \delta {\psi^*}')$ to the linearized problem \eqref{eq190}-\eqref{G3} and satisfies
 $(\delta {U}^* ; \delta {\psi^*}' )\in \mathscr{N}(\dot{U}_+; \dot{\psi}')$.
 \end{lem}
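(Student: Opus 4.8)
The plan is to show that the iteration operator $\mathbf{\Pi}$ maps the set $\mathscr{N}(\dot{U}_+;\dot{\psi}')$ into itself. Since $(\dot{U}_+;\dot{\psi}';0)=\mathscr{T}_e(\dot{\mathscr{F}};\dot{\mathscr{G}};\dot{P}_e^\sharp)$ by \eqref{initial}, and $(\delta U^*;\delta{\psi^*}';\delta\xi_*)=\mathscr{T}_e(\mathscr{F};\mathscr{G};P_e^*)$ by \eqref{iteration}, the linearity of the operator $\mathscr{T}_e$ (which is a consequence of Theorem \ref{cv0}, Lemma \ref{cv} and Lemma \ref{vb} giving the solvability condition) reduces the task to estimating the \emph{differences} of the data:
$$
\delta U^*-\dot{U}_+ \;=\; \mathscr{T}\big(\mathscr{F}-\dot{\mathscr{F}};\ \mathscr{G}-\dot{\mathscr{G}};\ P_e^*-\dot{P}_e^\sharp\big),\qquad
\delta{\psi^*}'-\dot{\psi}'
$$
governed by the same linear problem with zero-order and boundary data replaced by these differences. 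Hence, by the a priori estimate \eqref{eq218} of Lemma \ref{cv}, it suffices to bound
$$
\sum_{i=1}^2\|f_i\|_{L^\beta(\dot\Omega_+)}+\|f_3-f_3(\dot\xi_*,\eta)\|_{\mcc^0(\dot\Omega_+)}+\|f_4\|_{\mcc^0(\dot\Omega_+)}
+\sum_{i=1}^2\|\pounds_i^*-\dot{\pounds}_i\|_{W_\beta^{1-1/\beta}(\dot\Gamma_s)}+\|P_e^*-\dot{P}_e^\sharp\|_{W_\beta^{1-1/\beta}(\Gamma_3)}+\|G_3^*-\dot G_3^*\|_{W_\beta^{1-1/\beta}(\dot\Gamma_s)}
$$
by $\tfrac12\sigma g^{3/2}$ (up to the universal constant of Lemma \ref{cv}), whenever $g$ and $\sigma=O(g^3)$ are small.

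First I would record that, since $(\delta U-\dot U_+;\delta\psi'-\dot\psi')\in\textsl{N}(\tfrac12\sigma g^{3/2})$, Lemma \ref{lem5} gives $\|\delta U\|_{(\dot\Omega_+;\dot\Gamma_s)}+\|\delta\psi'\|_{W_\beta^{1-1/\beta}(\dot\Gamma_s)}\le C_*\sigma$, exactly as in \eqref{kk2}, and by Lemma \ref{vb} the solvability condition determines $\delta\xi_*$ with $|\delta\xi_*|\le C_s\sigma/g$. Then the nonlinear source terms $f_1,f_2$ split into three kinds of contributions: (i) quadratic terms in $\delta U$ coming from the difference between the exact Euler operator and its linearization at $\bar U_+$, which are $O(\sigma^2)=O(g^{-1}\cdot\sigma g)\le O(\sigma g^{5/2})$; (ii) terms carrying the factor $H_1(g,\sigma)-H_1(g,0)$, which by \eqref{q4} is $O(g\sigma)$, multiplied by $\partial_\eta\delta p$ or $\partial_\eta\delta\theta$ of size $O(\sigma)$, hence $O(g\sigma^2)$; and (iii) the free-boundary geometry terms carrying $\delta\psi'$ or the combination $\delta\xi_*-\int_\eta^1\delta\psi'$, which are $O((|\delta\xi_*|+\|\delta\psi'\|)\cdot\|\delta U\|)=O((\sigma/g+\sigma)\cdot\sigma)=O(\sigma^2/g)$. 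Under $\sigma\le g^3$ the dominant one is $O(\sigma^2/g)\le O(\sigma g^2)\le\tfrac14\sigma g^{3/2}$ for $g$ small. The terms $f_3$ and $f_4$ are handled the same way: $f_3(\delta U)-f_3(\dot U_+)$ is quadratic in $\delta U$, so $O(\sigma^2)$, and $f_4$ is $O((|\delta\xi_*|+\|\delta\psi'\|)\cdot g)=O(\sigma+g\sigma)=O(\sigma)$ — here one must be careful and use that $g\cdot(\delta\theta-\tan\theta)$ is quadratically small, i.e.\ $O(g\sigma^2)$, while the geometric piece $g\cdot\frac{\delta\xi_*-\int\delta\psi'}{L-\dot\xi_*}\tan\theta$ is $O(g\cdot(\sigma/g)\cdot\sigma)=O(\sigma^2)$; both are $o(\sigma g^{3/2})$.

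For the boundary data, the estimate \eqref{eq0105} of Lemma \ref{vb} already shows $\pounds_j^*=\dot{\pounds}_j(\dot\xi_*+\delta\xi_*,\eta)+O(\sigma^2)$, so the difference $\pounds_j^*-\dot{\pounds}_j(\dot\xi_*,\eta)$ splits into the translation term $\dot{\pounds}_j(\dot\xi_*+\delta\xi_*,\eta)-\dot{\pounds}_j(\dot\xi_*,\eta)=O(|\delta\xi_*|\cdot\|\partial_\xi\dot U_-\|)=O((\sigma/g)\cdot\sigma)=O(\sigma^2/g)$ plus the $O(\sigma^2)$ remainder, both bounded by $\tfrac14\sigma g^{3/2}$ when $\sigma\le g^3$; likewise $P_e^*-\dot{P}_e^\sharp=O(\sigma^2)$ by \eqref{q1}, and $G_3^*-\dot G_3^*$ is a difference of quadratic-in-$\delta U$ remainders plus $\dot U_-$-translation terms, again $O(\sigma^2/g)$. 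Summing all contributions, the right-hand side of the a priori estimate of Lemma \ref{cv} is bounded by $C\cdot\sigma^2/g\le C\sigma g^2\le\tfrac12\sigma g^{3/2}$ once $g<g_0$ is small enough, which is exactly $(\delta U^*-\dot U_+;\delta{\psi^*}'-\dot\psi')\in\textsl{N}(\tfrac12\sigma g^{3/2})$, i.e.\ $(\delta U^*;\delta{\psi^*}')\in\mathscr{N}(\dot U_+;\dot\psi')$.

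The main obstacle is bookkeeping the powers of $g$: several natural bounds only give $O(\sigma^2/g)$ or even $O(\sigma)$ for individual terms, and closing the argument requires exploiting the restriction $\sigma\le g^3$ so that $\sigma^2/g\le\sigma g^2\ll\sigma g^{3/2}$, together with observing that the apparently-$O(\sigma)$ pieces of $f_4$ are in fact quadratically small or carry an extra $\delta\xi_*$ factor. One must also confirm that the solvability constraint of Lemma \ref{vb} is used consistently — the new tuple $(\delta U^*;\delta{\psi^*}';\delta\xi_*)$ is the image under $\mathscr{T}_e$ with \emph{this same} $\delta\xi_*$, so no circularity arises — and that the constant from Lemma \ref{cv} is independent of $g,\sigma$ (it depends only on $\bar U_\pm,\dot\xi_*,L,\beta$), which is precisely what allows absorbing it into the small factor $g^{1/2}$.
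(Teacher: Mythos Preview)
Your proof follows the same strategy as the paper: write $(\delta U^*-\dot U_+;\delta{\psi^*}'-\dot\psi')=\mathscr{T}(\mathscr{F}-\dot{\mathscr{F}};\mathscr{G}-\dot{\mathscr{G}};P_e^*-\dot P_e^\sharp)$, apply the linear estimate of Lemma~\ref{cv}, and show every forcing and boundary difference is $O(\sigma^2/g)$, which under $\sigma\le g^3$ gives $C\sigma^2/g\le C\sigma g^2\le\tfrac12\sigma g^{3/2}$ for small $g$. Two small slips that do not affect the outcome: from \eqref{q4} one has $H_1(g,\sigma)-H_1(g,0)=O(\sigma)$ (not $O(g\sigma)$), so that contribution is $O(\sigma^2)$ rather than $O(g\sigma^2)$; and the third component of $\dot{\mathscr{F}}$ is $-B(\bar U_+)=f_3(0)$, not $f_3(\dot U_+)$, so the relevant difference is $f_3(\delta U)+B(\bar U_+)$, which is indeed the quadratic Taylor remainder and hence $O(\sigma^2)$ as you claim.
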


\begin{proof}
It suffices to verify that $(\delta {U}^* ; \delta {\psi^*}' )\in \mathscr{N}(\dot{U}_+; \dot{\psi}')$.

Recalling the definitions \eqref{iteration1}-\eqref{initial1}, one has
\begin{align}
  (\delta {U}^* - \dot{U}_+ ; \delta {\psi^*}' - \dot{\psi}' ) = \mathscr{T}(\mathscr{F} - \dot{\mathscr{F}};\mathscr{G} - \dot{\mathscr{G}}; P_{{e}}^* - \dot{P}_e^\sharp ),
\end{align}
where
\begin{align*}
  \mathscr{F}\defs& (f_1(\delta U, \delta \psi', \delta \xi_{*})+ H_{2+}(g,\sigma),
  f_2(\delta U, \delta \psi', \delta \xi_{*}), f_3(\delta U),f_4(\delta U, \delta \psi', \delta \xi_{*})),\\
  \dot{\mathscr{F}}\defs& (H_{2+}(g,\sigma),0,-B(\bar{U}_+),0),\\
  \mathscr{G}\defs &(\pounds_j^*(\delta U,\delta U_-, \delta \psi', \delta \xi_*); j=1,2,3),\\
  \dot{\mathscr{G}}\defs& (\dot{\pounds}_1, \dot{\pounds}_2, \dot{\pounds}_3).
\end{align*}
Similar as Lemma \ref{cv}, one has
\begin{equation}\label{l}
\begin{split}
   &\| \delta U^*- \dot{U}_+ \|_{(\dot{\Omega}_+;\dot{\Gamma}_s)} + \| \delta {\psi^*}' -\dot{\psi}' \|_{W_\beta^{1-\frac{1}{\beta}}(\dot{\Gamma}_s)}\\
    \leq & C\Big(\sum_{i=1}^2 \| f_i \|_{L^\beta(\dot{\Omega}_+)} +  \|{f}_3  +B(\bar{U}_+)
     \|_{\mcc^0(\dot{\Omega}_+)}+ \| f_4 \|_{\mcc^0(\dot{\Omega}_+)}\\
     &\quad \quad + \sum_{i=1}^3 \|{\pounds}_i^* - \dot{\pounds}_i\|_{W_\beta^{1-\frac{1}{\beta}}(\dot{\Gamma}_s)} + \| P_e^* - \dot{P}_e^\sharp\|_{W_\beta^{1-\frac{1}{\beta}}(\Gamma_3)}\Big).
\end{split}
\end{equation}
Now, we analyze the terms on the right hand side of \eqref{l}.
Applying the expression of $f_1$, one has
\begin{align}\label{eq293}
 &\|f_1(\delta U, \delta \psi', \delta \xi_{*})\|_{L^\beta(\dot{\Omega}_+)}\notag\\
 \leq&\Big\|\frac{\sin {\theta}}{{\rho}{q}}\partial_\xi \delta p -\Big( {q} \cos \theta -  \bar{q}_+ \Big)\partial_\xi \delta \theta - \Big(g\displaystyle\frac{\cos\theta}{q}- \frac{g}{\bar{q}_+}+ \displaystyle\frac{g}{\bar{q}_+^2}\delta q \Big)\Big\|_{L^\beta(\dot{\Omega}_+)}\notag\\
  &+ \Big\|(1+H_1(g,\sigma)) \frac{L - \xi}{L- \psi ({\eta})} \cdot \delta \psi'({\eta}) \partial_\xi \delta p  \Big\|_{L^\beta(\dot{\Omega}_+)}\\
  &+ \Big\|\displaystyle\frac{\delta \xi_* - \int_{\eta}^1 \delta \psi'(\tau)\dif \tau}{L -\psi(\eta)}\Big(\frac{\sin \theta}{\rho q} \partial_\xi \delta p -
  q \cos \theta\partial_\xi \delta \theta\Big) \Big\|_{L^\beta(\dot{\Omega}_+)}\notag\\
  & + \Big\| \Big(H_1(g,\sigma) - H_1(g,0) \Big)\partial_\eta \delta p\Big\|_{L^\beta(\dot{\Omega}_+)}\notag\\
  \leq& C\Big\{\|\delta U\|_{L^\infty(\dot{\Omega}_+)}\Big(\|(\delta p, \delta \theta) \|_{W_\beta^1(\dot{\Omega}_+)}+ |\delta \xi_*| \|\delta p \|_{W_\beta^1(\dot{\Omega}_+)}\Big) + g(\|\delta U\|_{L^\infty(\dot{\Omega}_+)})^2 \notag\\
  &\qquad +\|\delta \psi'\|_{L^\infty(\dot{\Gamma}_s)}\|(\delta p,\delta \theta) \|_{W_\beta^1(\dot{\Omega}_+)}
   +|\delta \xi_*| \|\delta \theta \|_{W_\beta^1(\dot{\Omega}_+)} + \sigma\|\delta p \|_{W_\beta^1(\dot{\Omega}_+)} \Big\}\notag\\
 \leq&  C \Big(\sigma^2 + C_s \sigma^2 \frac{\sigma}{g} + C_s \sigma   \frac{\sigma}{g} \Big)\notag\\
 \leq & C_{1}^{\flat}\frac{\sigma^2}{g}.\notag
\end{align}
Similarly, one has
\begin{align}
  \|f_2(\delta U, \delta \psi', \delta \xi_{*})\|_{L^\beta(\dot{\Omega}_+)}\leq  C_{2}^{\flat} \frac{\sigma^2}{g}.
\end{align}
Moreover, since
  \begin{equation}
  \begin{split}
    f_3(\delta U) + B(\bar{U}_+) & = -\left(B(U) - B(\bar{U}_+) - (\bar{q}_+ \delta q + \displaystyle\frac{1}{\bar{\rho}_+}\delta p)  \right)\\
    & = - \int_{0}^{1} D_{U}^2 B(\bar{U}_+ + t\delta U)\dif t \cdot (\delta U)^2,
  \end{split}
  \end{equation}
then one can obtain
\begin{equation}
  \|f_3 +B(\bar{U}_+)\|_{\mcc^0(\dot{\Omega}_+)}
  \leq C_{3}^{\flat}\sigma^2.
\end{equation}
Moreover,
\begin{align}
  \| f_4\|_{\mcc^0(\dot{\Omega}_+)}\leq C \Big(g  \sigma |\delta \xi_*| + g\sigma^2\Big) \leq C_{4}^{\flat} \sigma^2.
\end{align}
The constants $C_{i}^{\flat}$, $(i=1,2,3,4)$ depend on $\dot{\xi}_*$, $C_s$ and $C_*$.

On the exit $\Gamma_3$, recalling \eqref{q1}, one has
\begin{equation}
   \|P_e^* -\dot{P}_e^\sharp \|_{W_\beta^{1-\frac{1}{\beta}}(\Gamma_3)}\leq  C \sigma^2 .
\end{equation}
Finally, on the fixed boundary $\dot{\Gamma}_s$, for $j=1,2$, employing \eqref{eq0105}, one has
\begin{align}\label{eq535}
     \|{\pounds}_i^* - \dot{\pounds}_i\|_{W_\beta^{1-\frac{1}{\beta}}(\dot{\Gamma}_s)}
      \leq C\Big(\|\partial_\xi \dot{U}_-\|_{{\mcc}^{1,\alpha}(\bar{\Omega})}\cdot|\delta\xi_*|+ \sigma^2\Big)
       \leq C_{5}^{\flat} \frac{\sigma^2}{g}.
  \end{align}
In addition, a similar argument yields that
\begin{align}
     \|{\pounds}_3^* - \dot{\pounds}_3\|_{W_\beta^{1-\frac{1}{\beta}}(\dot{\Gamma}_s)}
       \leq C_{6}^{\flat} \frac{\sigma^2}{g}.
  \end{align}
  The constants $C_{i}^{\flat}$, $(i=5,6)$ depend on $\dot{C}_-^\natural$, $C_L^{\natural}$, $C_L$, $\dot{\xi}_*$, $C_s$ and $C_*$.

Therefore, for sufficiently small $g$, one has
\begin{equation}
\begin{split}
&\| \delta U^* - \dot{U}_+ \|_{(\dot{\Omega}_+;\dot{\Gamma}_s)}+ \| \delta {\psi^*}' -\dot{\psi}' \|_{W_\beta^{1-\frac{1}{\beta}}(\dot{\Gamma}_s)}\\
\leq& C \Big(\frac{\sigma^2}{g}+ \sigma^2\Big)
 \leq C \sigma  g^{\frac{3}{2}}\Big(g^{\frac{1}{2}} + g^{\frac{3}{2}}\Big) \leq \frac12 \sigma g^{\frac{3}{2}}.
\end{split}
\end{equation}

\end{proof}
The Theorem \ref{ppo} will be proved as long as we prove that the mapping $\mathbf{\Pi}$ is contractive. The following lemma completes the proof.
 \begin{lem}\label{thm63}
  Let $0<\sigma \leq g^3$, then the mapping $\mathbf{\Pi}$ is contractive.
 \end{lem}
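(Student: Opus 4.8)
The plan is to show that the iteration map $\mathbf{\Pi}:(\delta U;\delta\psi')\mapsto(\delta U^*;\delta{\psi^*}';\delta\xi_*)$ is a contraction on the complete metric space $\mathscr{N}(\dot U_+;\dot\psi')$ equipped with a \emph{weaker} norm than the one defining the space — this is the standard device for transonic-shock iterations, since the a priori estimates of Lemma \ref{cv} lose one order of regularity relative to the data and cannot directly yield contraction in the strong norm. First I would fix two points $(\delta U^{(k)};\delta\psi'^{(k)})\in\mathscr{N}(\dot U_+;\dot\psi')$, $k=1,2$, and denote by $(\delta U^{*(k)};\delta{\psi^*}'^{(k)};\delta\xi_*^{(k)})$ their images. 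Setting $\Delta U:=\delta U^{(1)}-\delta U^{(2)}$, $\Delta\psi':=\delta\psi'^{(1)}-\delta\psi'^{(2)}$, and likewise for the images, I would subtract the two copies of the linearized problem \eqref{eq190}-\eqref{G3}. Because the left-hand operators are identical (they are linearized at the fixed background $\bar U_+$), $(\Delta U^*;\Delta{\psi^*}';\Delta\xi_*)$ solves the \emph{same} elliptic-hyperbolic composite boundary value problem, now with right-hand data given by the differences $f_i(\delta U^{(1)},\cdots)-f_i(\delta U^{(2)},\cdots)$, the boundary-term differences $\pounds_i^{*(1)}-\pounds_i^{*(2)}$, $G_3^{*(1)}-G_3^{*(2)}$, and $P_e^{*(1)}-P_e^{*(2)}$.

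The core estimate is then to bound each of these differences by $C(g+\sigma g^{1/2})\bigl(\|\Delta U\|_{(\dot\Omega_+;\dot\Gamma_s)}+\|\Delta\psi'\|_{W_\beta^{1-1/\beta}(\dot\Gamma_s)}\bigr)$, possibly in a slightly weaker norm on the left. The key observations are: (i) each $f_i$ and each $G_j^*$ is, schematically, quadratic in $(\delta U,\delta U_-,\delta\psi',\delta\xi_*)$ plus terms carrying an explicit factor of $\sigma$ (from $H_1(g,\sigma)-H_1(g,0)$, or from $P_I^\sharp$ versus $P_I^*$, or from curvature terms $L-\psi$ versus $L-\dot\xi_*$), exactly as displayed in the expressions after \eqref{eq194}; differencing such expressions produces a factor of a \emph{difference} times a \emph{sum}, and since both arguments lie in $\mathscr{N}(\frac12\sigma g^{3/2})$ one gains a small multiplier $C\sigma$; (ii) the dependence on $\delta\xi_*$ is controlled by first proving that $|\delta\xi_*^{(1)}-\delta\xi_*^{(2)}|\le C\frac{\sigma}{g}\bigl(\|\Delta U\|+\|\Delta\psi'\|\bigr)$. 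This last Lipschitz bound on $\delta\xi_*$ follows from the implicit function theorem applied to the solvability relation \eqref{eq207}/\eqref{eq208}, using the nondegeneracy $\partial I/\partial(\delta\xi_*)\sim g\cdot K\int_0^1\dot\theta_-^{(1)}(\dot\xi_*,\eta)\,d\eta$ established in \eqref{eq214}-\eqref{eq217}, together with the fact — read off from the expansion \eqref{eq211} — that $I$ depends Lipschitz-continuously on $(\delta U,\delta\psi')$ with Lipschitz constant $O(\sigma g^{3/2})+O(\sigma^2)$, so that the implicit-function quotient gives the stated factor $\sigma/g$. Then applying Lemma \ref{cv} (equivalently the a priori estimate \eqref{x20} of Theorem \ref{cv0}) to the differenced problem yields
\[
\|\Delta U^*\|_{(\dot\Omega_+;\dot\Gamma_s)}+\|\Delta{\psi^*}'\|_{W_\beta^{1-\frac1\beta}(\dot\Gamma_s)}\le C\bigl(g^{\frac12}+\sigma\bigr)\Bigl(\|\Delta U\|_{(\dot\Omega_+;\dot\Gamma_s)}+\|\Delta\psi'\|_{W_\beta^{1-\frac1\beta}(\dot\Gamma_s)}\Bigr),
\]
and for $g$ small (with $0<\sigma\le g^3$) the constant $C(g^{1/2}+\sigma)<\frac12$, giving contraction.

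The main obstacle I anticipate is the loss-of-derivative issue in matching norms on the two sides of the differenced estimate: the terms $f_1,f_2$ contain $\partial_\xi\delta p,\partial_\xi\delta\theta$ and $\partial_\eta\delta p,\partial_\eta\delta\theta$, so their $L^\beta$ norms involve $\|\delta U\|_{W_\beta^1}$, whereas Lemma \ref{cv} only returns control of $\delta U^*$ in $W_\beta^1$, not anything stronger — so a naive contraction in $W_\beta^1$ is circular. The resolution, which I would carry out carefully, is to prove contraction in the weaker norm $\|\Delta U\|_{L^\beta(\dot\Omega_+)}+\|\Delta\psi'\|_{L^\beta(\dot\Gamma_s)}$ (or an analogous lower-order norm), exploiting that the quadratic structure lets one put the derivatives on one factor — which stays bounded by $\epsilon=\frac12\sigma g^{3/2}$ via membership in $\mathscr{N}$ — and the \emph{difference} in the low-order norm on the other factor; combined with the uniform bound already furnished by Lemma \ref{vn}, this closes the argument. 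A Banach fixed point in the weak norm on the set $\mathscr{N}(\dot U_+;\dot\psi')$, which is closed in that weak norm, then produces the unique fixed point $(\delta U;\delta\psi';\delta\xi_*)$, and unwinding the transformation $\mathcal{T}$ and the relation \eqref{bn} yields the solution $(U_-,U_+,\xi_*;\psi')$ of {\bf {$\llbracket\textit{FBPL}\rrbracket$}} together with all the estimates claimed in Theorem \ref{ppo}.
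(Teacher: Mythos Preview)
Your overall strategy --- subtract the two linearized problems, control $|\delta\xi_{*2}-\delta\xi_{*1}|$ via the implicit function theorem applied to \eqref{eq208}, then feed the differenced data into Lemma~\ref{cv} --- is exactly what the paper does. But you have misidentified an obstacle that is not there, and the workaround you propose would create a real one.

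There is no derivative loss. Look again at the norm \eqref{eq099}: $\|\delta U\|_{(\dot\Omega_+;\dot\Gamma_s)}$ already contains $\|\delta p\|_{W_\beta^1}+\|\delta\theta\|_{W_\beta^1}$, and Lemma~\ref{cv} returns precisely this same norm on $\delta U^*$. The right-hand sides $f_1,f_2$ only involve first derivatives of $p$ and $\theta$ (never of $q$), so their $L^\beta$ norms are bounded by $\|\delta U\|_{(\dot\Omega_+;\dot\Gamma_s)}$ times a small coefficient. Hence the paper proves contraction directly in the strong norm $\|\cdot\|_{(\dot\Omega_+;\dot\Gamma_s)}+\|\cdot\|_{W_\beta^{1-1/\beta}(\dot\Gamma_s)}$, with no weak-norm auxiliary step. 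Concretely, the paper gets $|\delta\xi_{*2}-\delta\xi_{*1}|\le C g^{-1}\bigl(\|\delta U_2-\delta U_1\|_{(\dot\Omega_+;\dot\Gamma_s)}+\|\delta\psi_2'-\delta\psi_1'\|_{W_\beta^{1-1/\beta}}\bigr)$ (since $\nabla_{(\delta U,\delta\psi')}I=O(\sigma)$ while $\partial I/\partial(\delta\xi_*)\sim g\sigma$), and then each $f_i,\pounds_i^*,P_e^*$ difference is bounded by $C\sigma/g$ times the strong-norm difference, giving contraction constant $C\sigma/g\le Cg^2<\tfrac12$.

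By contrast, your proposed weak-norm (e.g.\ $L^\beta$) contraction does not obviously close: differencing a term such as $(q\cos\theta-\bar q_+)\partial_\xi\delta\theta$ in $f_1$ produces an unavoidable piece $(q_2\cos\theta_2-\bar q_+)\,\partial_\xi(\delta\theta_1-\delta\theta_2)$, whose $L^\beta$ norm requires $\|\delta\theta_1-\delta\theta_2\|_{W_\beta^1}$ --- the strong norm of the difference, not the weak one. So drop the weak-norm detour and run the argument directly in $\|\cdot\|_{(\dot\Omega_+;\dot\Gamma_s)}$; the iteration closes without loss.
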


\begin{proof}

Suppose that $(\delta U_k; \delta \psi_k^{'})\in \mathscr{N}(\dot{U}_+; \dot{\psi}')$, $(k=1,2)$, then by employing Lemma \ref{vb} and Lemma \ref{vn}, there exists $\delta \xi_{*k}$ satisfying the estimate \eqref{eq100000} and $(\delta {U}_k^* ; \delta \psi_k^{*'})\in \mathscr{N}(\dot{U}_+; \dot{\psi}')$ such that
\begin{align}\label{iteration2}
  (\delta {U}_k^*; \delta {\psi_k^*}';\delta \xi_{*k}) = \mathscr{T}_e(\mathscr{F}_k; \mathscr{G}_k; P_{{e}}^*(\eta; \delta U_k)),
\end{align}
where
\begin{align*}
  \mathscr{F}_k\defs& (f_1(\delta U_k, \delta \psi_k^{'}, \delta \xi_{*k})+ H_{2+}(g,\sigma), f_2(\delta U_k, \delta \psi_k^{'}, \delta \xi_{*k}), f_3(\delta U_k), f_4(\delta U_k, \delta \psi_k^{'}, \delta \xi_{*k})),\\
  \mathscr{G}_k\defs& (\pounds_j^*(\delta U_k,\delta U_-,  \delta \psi_k^{'}, \delta \xi_{*k}); j=1,2,3).
\end{align*}
  To prove the mapping $\mathbf{\Pi}$ is contractive, it suffices to verify that, for sufficiently small $g$ and $\sigma$,
\begin{equation}\label{ee}
\begin{split}
     &\| \delta U_2^* - \delta U_1^* \|_{(\dot{\Omega}_+;\dot{\Gamma}_s)} + \| \delta \psi_{2}^{*'} - \delta \psi_{1}^{*'}\|_{W_\beta^{1-\frac{1}{\beta}}(\dot{\Gamma}_s)}\\
     \leq& \frac12\Big(\| \delta U_2 - \delta U_1\|_{(\dot{\Omega}_+;\dot{\Gamma}_s)} + \| \delta {\psi_2}' - \delta {\psi_1}'\|_{W_\beta^{1-\frac{1}{\beta}}(\dot{\Gamma}_s)}\Big).
   \end{split}
   \end{equation}
Applying \eqref{iteration2}, one has
\begin{align}
  &(\delta U_2^* - \delta U_1^* ; \delta \psi_{2}^{*'} - \delta \psi_{1}^{*'})\label{iteration3}\\
  =&  \mathscr{T}_e(\mathscr{F}_2 - \mathscr{F}_1; \mathscr{G}_2 -\mathscr{G}_1 ; P_{{e}}^*(\eta; \delta U_2) - P_{{e}}^*(\eta; \delta U_1))\notag.
\end{align}
Since the right hand side of \eqref{iteration3} includes $\delta \xi_{*k}$, which is determined by Lemma \ref{vb} with given $(\delta U_k^*; \delta \psi_{k}^{*'})$, thus one has to estimate $|\delta \xi_{*2} - \delta \xi_{*1}|$ first.
%
%Since $(\delta U_{k}- \dot{U}_+; \delta \psi_{k}^{'} - \dot{\psi}')\in \textsl{N}(\frac12 \sigma g^{\frac43})$, $k =1,2$, then by Lemma \ref{vb} and Lemma \ref{vn}, there exists $\delta \xi_{*k}$ satisfying the estimate \eqref{eq100000} and $(\delta {U}_k^* - \dot{U}_+ ; \delta \psi_k^{*'} - \dot{\psi}' )\in \textsl{N}(\frac12 \sigma g^{\frac43})$.
%\begin{align}
% &\| \delta U_2^* - \delta U_1^* \|_{(\dot{\Omega}_+;\dot{\Gamma}_s)} + \| \delta \psi_{2}^{*'} - \delta \psi_{1}^{*'}\|_{W_\beta^{1-\frac{1}{\beta}}(\dot{\Gamma}_s)}\notag\\
%  \leq & C \Big(\sum_{i=1}^2 \| f_i^{(2)} - f_i^{(1)} \|_{L^\beta ( \dot{\Omega}_+)}+ \sum_{i=1}^{3}\| \pounds_i^{*2} - \pounds_i^{*1} \|_{W_\beta^{1-\frac{1}{\beta}}(\dot{\Gamma}_s)}+
%    \|f_3^{(2)} - f_3^{(1)} \|_{\mcc^0(\dot{\Omega}_+)}\notag\\
%    & +  \| f_4^{(2)} - f_4^{(1)}\|_{\mcc^0(\dot{\Omega}_+)}+\| \sigma P_e^{*2} - \sigma P_e^{*1}  \|_{W_\beta^{1-\frac{1}{\beta}}({\Gamma}_3)}\Big),
%  \end{align}
%  where $f_i^{(k)}\defs f_i(\delta U_k, \delta \psi_k^{'}, \delta \xi_{*k})$, $f_3^{(k)}: = f_3(\delta U_k)$, $\pounds_i^{*k}\defs \pounds_i^{*}(\delta U_k, \delta U_-, \delta \psi_k^{'}, \delta \xi_{*k})$,
%  $P_e^{*k} \defs P_e^{*}( Y_+(L,\eta;\delta U_k))$.
%
%For above equation, it is necessary to estimate $|\delta \xi_{*2} - \delta \xi_{*1}|$ first.
\begin{equation}\label{eq304}
\begin{split}
 0 =& I(\delta \xi_{*2},  \delta {U}_2,\delta {\psi_2}',  \pounds_1^*(\delta U_-)) - I(\delta \xi_{*1},  \delta {U}_1,\delta {\psi_1}',  \pounds_1^*(\delta U_-))
 \\
 = &I(\delta \xi_{*2}, \delta U_2 , \delta {\psi_2}' ,  \pounds_1^*(\delta U_-)) - I(\delta \xi_{*1}, \delta U_2 , \delta {\psi_2}' ,  \pounds_1^*(\delta U_-))
 \\
& + I(\delta \xi_{*1}, \delta U_2 , \delta {\psi_2}' ,  \pounds_1^*(\delta U_-)) -  I(\delta \xi_{*1}, \delta U_1 , \delta {\psi_1}',  \pounds_1^*(\delta U_-))
\\
= &\int_{0}^{1} \frac{\partial I }{\partial (\delta \xi_*)} (\delta \xi_{*t}, \delta U_2,\delta {\psi_2}',  \pounds_1^*(\delta U_-))\dif t \cdot (\delta \xi_{*2} - \delta \xi_{*1})
\\
& + \int_{0}^{1} \nabla_{(\delta U, \delta \psi')} I (\delta \xi_{*1} , \delta U_t, \delta {\psi_t}' , \pounds_1^*(\delta U_-))\dif t \cdot (\delta U_2 - \delta U_1, \delta {\psi_2}' - \delta {\psi_1}' ),
\end{split}
\end{equation}
where
\begin{align}
  &\delta \xi_{*t} : = t \delta \xi_{*2} + (1 - t) \delta \xi_{*1},\quad
    \delta U_t : = t \delta U_2 + (1 - t) \delta U_1,
    \notag \\
    & \delta {\psi_t}' : = t \delta {\psi_2}' + (1 - t)\delta {\psi_1}'.
\end{align}
Similar calculations as in Lemma \ref{vb}, one has
\begin{align}\label{eq305}
&\frac{\partial I }{\partial (\delta \xi_*)} (\delta \xi_{*t}, \delta U_2,\delta {\psi_2}',  \pounds_1^*(\delta U_-))\notag\\
 = &\frac{\partial I }{\partial (\delta \xi_*)}(0 , 0, 0;  \dot{\pounds}_1(\dot{U}_-))\\
 &+ \int_{0}^{1} \nabla_{(\delta \xi_*, \delta U, \delta \psi' , \pounds_1^*(\delta U_-))} \frac{\partial I}{\partial (\delta \xi_*)}(s\delta \xi_{*t}, s\delta U_{2}, s\delta {\psi_{2}}', s \pounds_1^*(\delta U_-) + (1-s)  \dot{\pounds}_1^*(\dot{U}_-))\dif s\notag\\
&\cdot (\delta \xi_{*t}, \delta U_2  , \delta {\psi_2}',  \pounds_1^*(\delta U_-) -  \dot{\pounds}_1^*(\dot{U}_-))\notag\\
 =& g\cdot K \int_{0}^{1} \dot{\theta}_-^{(1)}(\dot{\xi}_*,\eta)\dif \eta+ O(1)\sigma g^2 + O(1)\sigma g^{\frac{3}{2}} + O(1)\sigma^2.\notag
\end{align}
Moreover,
\begin{align}
  \nabla_{(\delta U, \delta \psi')} I (\delta \xi_{*1} , \delta U_t, \delta {\psi_t}' , \pounds_1^*(\delta U_-)) = O(1)\sigma.
\end{align}
Therefore, \eqref{eq304} implies that
\begin{equation}\label{eq306}
\begin{split}
|\delta \xi_{*2} - \delta \xi_{*1}| & \leq C\Big|\frac{\sigma \Big(\parallel \delta U_2 - \delta U_1\parallel_{W_\beta^{1-\frac{1}{\beta}}(\dot{\Gamma}_s)} + \parallel \delta {\psi_2}' - \delta {\psi_1}'\parallel_{W_\beta^{1-\frac{1}{\beta}}(\dot{\Gamma}_s)}\Big)} {Kg\cdot \int_{0}^{1} \dot{\theta}_-^{(1)}(\dot{\xi}_*,\eta)\dif \eta+ O(1)\sigma g^2 + O(1)\sigma g^{\frac{3}{2}} + O(1)\sigma^2}\Big|\\
&\leq C_{1}^{\sharp} g^{-1}\cdot \Big(\parallel \delta U_2 - \delta U_1\parallel_{(\dot{\Omega}_+;\dot{\Gamma}_s)} + \parallel \delta {\psi_2}' - \delta {\psi_1}'\parallel_{W_\beta^{1-\frac{1}{\beta}}(\dot{\Gamma}_s)}\Big).
\end{split}
\end{equation}
Applying \eqref{eq306}, one can deduce that
\begin{align}
&\| f_1(\delta U_2 , \delta {\psi_2}',\delta \xi_{*2}) -f_1(\delta U_1 , \delta {\psi_1}',\delta \xi_{*1})
{\|_{L^\beta({\dot{\Omega}}_+)}}\\
\leq& C\Big(\big(\|\delta U_1\|_{L^\infty(\dot{\Omega}_+)} +\|\delta U_2\|_{L^\infty(\dot{\Omega}_+)}+\|\delta \psi'\|_{L^\infty(\dot{\Gamma}_s)}\big)\notag\\
&\quad \cdot\big(\parallel \delta U_2 - \delta U_1\parallel_{(\dot{\Omega}_+;\dot{\Gamma}_s)} + \parallel \delta {\psi_2}' - \delta {\psi_1}'\parallel_{W_\beta^{1-\frac{1}{\beta}}(\dot{\Gamma}_s)}+|\delta \xi_{*2} -\delta \xi_{*1} |\big) \notag\\
&\quad + (|\delta \xi_{*1} | +|\delta \xi_{*2} |)\|\delta U_2 -\delta U_1 \|_{W_\beta^1(\dot{\Omega}_+;\dot{\Gamma}_s)}
 + \sigma \|\delta p_2 - \delta p_1 \|_{W_\beta^1(\dot{\Omega}_+)} \Big)\notag\\
 \leq&  C\big(  \sigma + \frac{\sigma}{g}\big)\cdot  \big(\parallel \delta U_2 - \delta U_1\parallel_{(\dot{\Omega}_+;\dot{\Gamma}_s)} + \parallel \delta {\psi_2}' - \delta {\psi_1}'\parallel_{W_\beta^{1-\frac{1}{\beta}}(\dot{\Gamma}_s)}\big) \notag\\
 \leq & C_{2}^{\sharp} \frac{\sigma}{g}\cdot \big(\parallel \delta U_2 - \delta U_1\parallel_{(\dot{\Omega}_+;\dot{\Gamma}_s)} + \parallel \delta {\psi_2}' - \delta {\psi_1}'\parallel_{W_\beta^{1-\frac{1}{\beta}}(\dot{\Gamma}_s)}\big).\notag
\end{align}
Similarly, one has
\begin{align}
&\| f_2(\delta U_2 , \delta {\psi_2}',\delta \xi_{*2}) -f_2(\delta U_1 , \delta {\psi_1}',\delta \xi_{*1})
{\|_{L^\beta({\dot{\Omega}}_+)}}\\
\leq&  C_{3}^{\sharp} \frac{\sigma}{g}\cdot \big(\parallel \delta U_2 - \delta U_1\parallel_{(\dot{\Omega}_+;\dot{\Gamma}_s)} + \parallel \delta {\psi_2}' - \delta {\psi_1}'\parallel_{W_\beta^{1-\frac{1}{\beta}}(\dot{\Gamma}_s)} \big),\notag\\
&\| f_3(\delta U_2 ) -f_3(\delta U_1 )
{\|_{\mcc^0({\dot{\Omega}}_+)}}\\
\leq&  C_{4}^{\sharp} \sigma\cdot  \| \delta U_2 - \delta U_1\|_{(\dot{\Omega}_+;\dot{\Gamma}_s)},\notag\\
&\| f_4(\delta U_2 ) -f_4(\delta U_1 )
{\|_{\mcc^0({\dot{\Omega}}_+)}}\\
\leq& C_{5}^{\sharp}\sigma\cdot \big(\parallel \delta U_2 - \delta U_1\parallel_{(\dot{\Omega}_+;\dot{\Gamma}_s)} + \parallel \delta {\psi_2}' - \delta {\psi_1}'\parallel_{W_\beta^{1-\frac{1}{\beta}}(\dot{\Gamma}_s)}\big).\notag
\end{align}
On the free boundary $\dot{\Gamma}_s$, for $j=1,2$, one has
\begin{equation}
\begin{split}
    &\| \pounds_i^{*2} - \pounds_i^{*1} \|_{W_\beta^{1-\frac{1}{\beta}}(\dot{\Gamma}_s)}\\
      \leq& C\|\partial_\xi \dot{U}_-\|_{{\mcc}^{1,\alpha}(\bar{\Omega})}\cdot|\delta\xi_{*2} - \delta \xi_{*1}|\\
       \leq &C_{6}^{\sharp} \frac{\sigma}{g}\cdot \big(\parallel \delta U_2 - \delta U_1\parallel_{(\dot{\Omega}_+;\dot{\Gamma}_s)} + \parallel \delta {\psi_2}' - \delta {\psi_1}'\parallel_{W_\beta^{1-\frac{1}{\beta}}(\dot{\Gamma}_s)}  \big).
       %\Notag\\
%       \Leq & C G^{\Frac32}\Big(\Parallel \Delta U_2 - \Delta U_1\Parallel_{(\Dot{\Omega}_+;\Dot{\Gamma}_S)} + \Parallel \Delta {\Psi_2}' - \Delta {\Psi_1}'\Parallel_{W_\Beta^{1-\Frac{1}{\Beta}}(\Dot{\Gamma}_S)}  \Big),
  \end{split}
  \end{equation}
Similarly,
\begin{equation}
\begin{split}
    &\| \pounds_3^{*2} - \pounds_3^{*1} \|_{W_\beta^{1-\frac{1}{\beta}}(\dot{\Gamma}_s)}\\
    \leq& C_{7}^{\sharp} \frac{\sigma}{g} \cdot \big(\parallel \delta U_2 - \delta U_1\parallel_{(\dot{\Omega}_+;\dot{\Gamma}_s)} + \parallel \delta {\psi_2}' - \delta {\psi_1}'\parallel_{W_\beta^{1-\frac{1}{\beta}}(\dot{\Gamma}_s)}  \big).
\end{split}
\end{equation}
Moreover,
\begin{align}\label{differP}
  \|  P_e^{*2} -  P_e^{*1}  \|_{W_\beta^{1-\frac{1}{\beta}}({\Gamma}_3)}\leq C_{8}^{\sharp}\cdot \sigma  \| \delta U_2 - \delta U_1\|_{(\dot{\Omega}_+;\dot{\Gamma}_s)}.
\end{align}
By employing \eqref{eq306}-\eqref{differP}, it holds that
   \begin{equation}
   \begin{split}
     &\| \delta U_2^* - \delta U_1^* \|_{(\dot{\Omega}_+;\dot{\Gamma}_s)} + \| \delta \psi_{2}^{*'} - \delta \psi_{1}^{*'}\|_{W_\beta^{1-\frac{1}{\beta}}(\dot{\Gamma}_s)}\\
     \leq & C \frac{\sigma}{g} \cdot  \big(\| \delta U_2 - \delta U_1\|_{(\dot{\Omega}_+;\dot{\Gamma}_s)} + \| \delta {\psi_2}' - \delta {\psi_1}'\|_{W_\beta^{1-\frac{1}{\beta}}(\dot{\Gamma}_s)}\big)\\
     \leq & C_{9}^{\sharp}g^2\cdot  \big(\| \delta U_2 - \delta U_1\|_{(\dot{\Omega}_+;\dot{\Gamma}_s)} + \| \delta {\psi_2}' - \delta {\psi_1}'\|_{W_\beta^{1-\frac{1}{\beta}}(\dot{\Gamma}_s)}\big),
     \end{split}
   \end{equation}
   The constants $C_{j}^{\sharp}$, $(j=1,\cdots 9)$ depend on $p_0, q_0$, $P_I$, $P_E$, $\beta$, $L$ and $\dot{\xi}_*$. Therefore, we can choose $ C_{9}^{\sharp}g^2 = \frac12$ for sufficiently small $g$.
 Thus, the proof of Lemma \ref{thm63} is completed.
\end{proof}

%\appendix
%\section{The higher regularities of the subsonic solution}

\section*{Acknowlegements}
The research of Beixiang Fang was supported in part by Natural Science
Foundation of China under Grant Nos. 11971308, and 11631008. The research of Xin Gao was supported in part by China Scholarship Council (No.201906230072).

%????????????

\end{document}